\def\restrict#1{\raise-.5ex\hbox{\ensuremath|}_{#1}}
\def\XXint#1#2#3{{\setbox0=\hbox{$#1{#2#3}{\int}$ }
\vcenter{\hbox{$#2#3$ }}\kern-.5775\wd0}}
\newtheorem{theorem}{Theorem}[section]
\newtheorem{lemma}[theorem]{Lemma}
\newtheorem{proposition}[theorem]{Proposition}
\newtheorem{corollary}[theorem]{Corollary}
\newtheorem{remark}[theorem]{Remark}
\newtheorem{definition}[theorem]{Definition}
\newtheorem{conjecture}[theorem]{Conjecture}
\newcommand{\llangle}{\langle\!\langle}
\renewenvironment{proof}[1][Proof.]{\begin{trivlist}
\item[\hskip \labelsep {\itshape #1}]}{\end{trivlist}}
\title{First-order definability of Campana Points and Darmon Points in algebraic function fields in one variable over number fields}
\author{Juan Pablo De Rasis}
\begin{document}

\maketitle

\begin{abstract}We give first-order definitions of Campana and Darmon points in algebraic function fields in one variable over number fields. These sets are geometric generalizations of $n$-full integers (integers whose nonzero valuations are at least $n$) and perfect $n$th powers, respectively, to more general algebraic varieties. For this we exploit the theory of quadratic Pfister forms, which were used in \cite{becher2025uniformexistentialdefinitionsvaluations} and \cite{daans2024universallydefiningsubringsfunction} to extend to the case of algebraic function fields in one variable the methods used by Koenigsmann in \cite{MR3432581}. These methods had already been generalized to arbitrary global fields by Park in \cite{park} and Eisenträger \& Morrison in \cite{MR3882159}, and the author had already exploited these methods to find first-order definitions of Campana points (in \cite{derasis2025firstorderdefinabilityaffinecampana}) and Darmon points (in \cite{derasis2024firstorderdefinabilitydarmonpoints}, with Handley) in the context of number fields. With the newly expanded version of these methods, we now transfer those results to the new context of algebraic function fields in one variable over number fields.
\end{abstract}

\section{Introduction}

If $R$ is an integral domain, $n\in\mathbb{Z}_{\geq 1}$ and $A\subseteq R^n$, we say that $A$ is \emph{diophantine} (or \emph{existentially definable}) over $R$ if there exists $m\in\mathbb{Z}_{\geq 0}$ and a polynomial $P\in R\left[x_1,\cdots,x_m,y_1,\cdots,y_n\right]$ such that $A=\left\{\vec{y}\in R^n:\exists x_1,\cdots,x_m\in R\left(P\left(x_1,\cdots,x_m,\vec{y}\right)=0\right)\right\}$. The understanding of diophantine sets is intimately related to the question about the decidability of the positive existential theory of the ring in question. Hilbert's Tenth Problem originally asked this question when $R=\mathbb{Z}$, which was negatively answered by Matiyasevich in \cite{MR0258744}, building its argument from Davis, Putnam, and Robinson's research on exponential diophantine equations in \cite{MR0133227}. Even though for various other rings an answer has also been attained (see for example \cite{MR0360513}, \cite{MR4126887}, \cite{MR4633727}, \cite{koymans2025hilbertstenthproblemadditive}), the decidability of the positive existential theory of $\mathbb{Q}$ and, more generally, any global field, remains unanswered. Given an integral domain $R$, the question about the decidability of the positive existential theory of $R$ is commonly known as \emph{Hilbert's Tenth Problem over $R$}.

If $R'$ is a subring of $R$, the diophantineness of $R'$ over $R$ allows to transfer a negative answer to Hilbert's Tenth Problem over $R'$ to $R$ (see for example \cite[Proposition 2.1]{garciafritz2023effectivity}), which explains the deep interest in the study of diophantine sets. In an attempt to contribute towards an existential definition of $\mathbb{Z}$ in $\mathbb{Q}$, Koenigsmann showed in \cite{MR3432581} that $\mathbb{Z}$ is \emph{universally} defined in $\mathbb{Q}$ (i.e. it is the complement of a diophantine set), by developing methods and results that were later generalized to prove analogous statements on more arbitrary global fields (see for example \cite{park}, \cite{MR3882159}, \cite{MR3778193}, \cite{morrisontravisnonnorms}, \cite{MR4378716}). Since the question about the undecidability of the positive existential theory of $\mathbb{Q}$ (or, more generally, a global field $K$) seems currently out of reach, research has tried an ``intermediate step'' approach, by studying intermediate subrings (some instances being \cite{MR1992832}, \cite{MR2549950}, \cite{MR2820576}, \cite{MR2915472}, \cite{MR3695862}). Motivated by this approach, the author offered a $\forall\exists$-definition for every member of a sequence of sets that interpolates between a number field and its set of $S$-integers. These sets are known as the \emph{Campana points} (see \cite{derasis2025firstorderdefinabilityaffinecampana}). Together with Handley, the author was able to extend his methods to give a similar approach for Darmon points in \cite{derasis2024firstorderdefinabilitydarmonpoints}.

Recently Becher, Daans, and Dittmann were able to further generalize Koenigsmann's methods to the case of algebraic function fields in one variable (see \cite{becher2025uniformexistentialdefinitionsvaluations} and \cite{daans2024universallydefiningsubringsfunction}). This motivated the author to transfer his previous results to this new scenario, and in this paper we prove:

\begin{theorem}[\ref{mainthmcampana} and \ref{mainthmdarmon}]\label{main}In algebraic function fields in one variable over number fields, Campana points are uniformly $\forall\exists$-definable and Darmon points are uniformly $\forall\exists\forall$-definable.
\end{theorem}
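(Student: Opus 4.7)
The plan is to mirror the strategy developed by the author in his prior works (\cite{derasis2025firstorderdefinabilityaffinecampana} for Campana points and \cite{derasis2024firstorderdefinabilitydarmonpoints} for Darmon points in the number field setting), substituting the Koenigsmann--Park--Eisenträger--Morrison valuation definitions with the uniform existential valuation definitions available in algebraic function fields in one variable over number fields thanks to \cite{becher2025uniformexistentialdefinitionsvaluations} and \cite{daans2024universallydefiningsubringsfunction}. The key input is a single existential formula $\varphi(x;\vec{t})$ which, as $\vec{t}$ ranges over a suitable parameter set in the function field $K$, cuts out precisely the valuation rings of the non-archimedean places of $K$. All of the quantifier-counting below rests on this engine.

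For the Campana statement, fix a level $n \geq 1$ and a finite set $S$ of places of $K$. A Campana point is an element $a \in K^{\times}$ such that for every place $v \notin S$, either $v(a) = 0$ or $v(a) \geq n$. I would express this condition first-order by: (i) using $\varphi(x;\vec{t})$ to existentially encode the local condition $v(a) \geq 0$; (ii) existentially encoding the disjunction ``$a$ is a $v$-unit'' or ``$a$ lies in the $n$-th power of the maximal ideal of $O_v$'' (the latter via an existential witness of the form $a = u^n r$ with $r \in O_v$, using $\varphi$ to certify $r \in O_v$ and $u$ encoding a uniformizer), and (iii) placing a universal quantifier over $\vec{t}$ so that the disjunction holds at every place $v \notin S$. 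This produces a $\forall\exists$-formula defining the set of Campana points.

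For the Darmon statement the local condition at each $v \notin S$ is replaced by $n \mid v(a)$. Following the blueprint of \cite{derasis2024firstorderdefinabilitydarmonpoints}, I would encode this as: there exists $b \in K$ such that for every place $v \notin S$, $v(a) = n \cdot v(b)$. The witness $b$ contributes an inner existential quantifier, and the verification that $v(a) = n\,v(b)$ at every such $v$ --- again implemented through $\varphi$ and a pair of inequalities $v(a) \geq n v(b)$ and $v(a) \leq n v(b)$ --- requires a further universal sweep on the outside, producing the $\forall\exists\forall$-formula.

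The main obstacle I anticipate is keeping the uniformity intact when passing from the raw valuation-ring definition to the more delicate local conditions ``membership in the $n$-th power of the maximal ideal'' and ``divisibility of the valuation by $n$''. In the number field case the analogous step is handled by dedicated Pfister-form identities; for function fields one must verify that the Pfister-form apparatus of \cite{becher2025uniformexistentialdefinitionsvaluations} and \cite{daans2024universallydefiningsubringsfunction} is robust enough to detect these higher-order local conditions without introducing additional quantifier alternations. A secondary subtlety is the excision of the finitely many places in $S$, which must be achieved by restricting the parameter tuple $\vec{t}$ via finitely many algebraic side conditions. Once these uniformity points are settled, the quantifier-structural part of the argument --- matching quantifier blocks to the Campana and Darmon local conditions --- essentially transcribes the number-field argument verbatim.
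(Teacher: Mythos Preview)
Your Campana outline is close in spirit to the paper's argument, though you have the local condition reversed: the paper's Campana set is $\{x : v(x) \in \mathbb{Z}_{\geq 0} \cup \mathbb{Z}_{\leq -n} \text{ for all } v \notin S\}$, constraining the \emph{poles}, not the zeros; the same mechanism handles either variant. Two more substantive points are missing. First, the Pfister machinery of \cite{becher2025uniformexistentialdefinitionsvaluations} and \cite{daans2024universallydefiningsubringsfunction} needs the base field non-real and $F/K$ regular; the paper first passes to $F' = FL$ for a suitable finite $L/K(\sqrt{-1})$ (Theorem~\ref{reductionstep}) and then descends via coordinate polynomials (Corollary~\ref{backandforth}) after checking that constant extensions are unramified (Lemma~\ref{unramifiedbyextension2}), so that $C_{F'/L,S',n} \cap F = C_{F/K,S,n}$. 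Second, your plan to excise $S$ by ``finitely many algebraic side conditions'' would not be uniform in $S$; the paper instead encodes $S$ itself as an $\Omega$-set and checks disjointness existentially (Lemma~\ref{emptyint2}), which is what delivers the uniformity claimed in the theorem.

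Your Darmon plan has a genuine gap. The Darmon condition is $v(x) \in \mathbb{Z}_{\geq 0} \cup n\mathbb{Z}$: divisibility by $n$ is imposed only when $v(x) < 0$. Your encoding ``$\exists b \;\forall v \notin S : v(a) = n\,v(b)$'' forces $n \mid v(a)$ at \emph{every} place outside $S$, so it cuts out only $n$-th powers up to $S$-units --- every $S$-integer is a Darmon point but typically fails your criterion. The paper uses no global witness. It universally quantifies over tuples $(\alpha_1,\alpha_2,\beta,\gamma)$ encoding a finite set $\Delta_{\alpha_1,\alpha_2,\beta,F'/L}$ disjoint from $S'$, and in the semi-local PID $T_{\alpha_1,\alpha_2,\beta,F'/L} = \bigcap_{v \in \Delta} \mathcal{O}_v$ asks for $x = y/z^n$ with $y,z$ coprime there --- a purely local existential witness. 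The trailing universal block making this $\forall\exists\forall$ rather than $\forall\exists$ does not come from valuation inequalities as you suggest; it arises because the antecedent must assert $\Delta_{\alpha_1,\alpha_2,\beta,F'/L} = \Delta_\gamma^{\alpha_1,\alpha_2,\beta,F'/L}$ (the diophantine ring $T$ is indexed by the former, while the diophantine disjointness-from-$S'$ test runs through the Jacobson radical indexed by the latter), and that equality is only $\forall\exists$ (Lemma~\ref{equalitytwodeltas}). Its negation inside the implication is what contributes the final $\forall$.
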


In the upcoming section we will see the specific definitions for Campana and Darmon points. It will be important to note from the beginning that these sets are given in terms of a finite set of $\mathbb{Z}$-valuations that are trivial on the base field. To be more specific, given an algebraic function field $F$ in one variable over a number field $K$, given $n\in\mathbb{Z}_{\geq 1}$ and given a finite set $S$ of $\mathbb{Z}$-valuations on $F$ that are trivial on $K$, we will define the sets $C_{F/K,S,n}$ (the set of $n$-Campana points of $F/K$ with respect to $S$) and $D_{F/K,S,n}$ (the set of $n$-Darmon points of $F/K$ with respect to $S$). As we will see, the first-order descriptions we will give will be uniform with respect to all possible choices of $S$ (cf. Remark \ref{campanauniform} and Remark \ref{darmonuniform}).

The above remark is important because, by \cite[Corollary 5.9]{daans2024universallydefiningsubringsfunction}, if $K$ is a number field and $F$ is an algebraic function field in one variable over $K$, every computably enumerable relation on $F$ is $\exists\forall$-definable. In particular, since every decidable subset is both computably enumerable and has a computably enumerable complement, then all decidable subsets are both $\forall\exists$-definable and $\exists\forall$-definable. Since Campana points and Darmon points are decidable, we can get our first-order characterization \emph{a priori}. The difference here is that our explicit approach towards a specific construction of their defining formulas will reveal to be \emph{uniform} with respect to all finite subsets $S$ of $\mathbb{Z}$-valuations trivial on the base field in terms of which these sets are defined. More specifically, the sequence of Proposition \ref{paramplaces}, Proposition \ref{paramplaces2}, and Proposition \ref{paramplaces3} will allow for a parametrization of all finite subsets of such places in a way that will allow for a uniform control on which of those sets $S$ is the one involved in the defining formula.

\subsection{Understanding Campana and Darmon points}

In the ring $\mathbb{Z}$, for each $n\in\mathbb{Z}_{\geq 1}$ we have the $n$-full integers; that is, integers $m$ such that $\nu_p\left(m\right)^2\geq n\nu_p\left(m\right)$ for all primes $p$. We also have the perfect $n$th powers. To generalize these notions to more arbitrary varieties, fix a number field $K$ and a smooth proper variety $X$ over $K$. Let $D=\displaystyle{\sum_{\alpha\in\mathcal{A}}\varepsilon_\alpha D_\alpha}$ be an effective Weil $\mathbb{Q}$-divisor, where $\mathcal{A}$ is a finite set and, for each $\alpha\in \mathcal{A}$, $D_\alpha$ is a prime divisor of $X$ and there exists $n_\alpha\in\mathbb{Z}_{\geq 1}\cup\left\{+\infty\right\}$ with $\varepsilon_\alpha=1-\frac{1}{n_\alpha}$. Assume also that $\displaystyle{\sum_{\alpha\in\mathcal{A}}D_\alpha}$ has strict normal crossings on $X$.

Given a finite subset $S$ of places in $K$ containing all the archimedean places, we take a model $\left(\mathcal{X},\mathcal{D}\right)$ of $\left(X,D\right)$ over $\mathcal{O}_{K,S}$ (the ring of $S$-integers of $K$; that is, the set of elements of $K$ which are integral outside $S$); i.e., a flat proper scheme $\mathcal{X}$ over $\mathcal{O}_{K,S}$ with $\mathcal{X}_{\left(0\right)}\cong X$ (where $\mathcal{X}_{\left(0\right)}$ is the generic fiber of $\mathcal{X}$ over $\mathcal{O}_{K,S}$), and $\mathcal{D}\coloneqq\displaystyle{\sum_{\alpha\in\mathcal{A}}\varepsilon_\alpha \mathcal{D}_\alpha}$. Here, for each $\alpha\in\mathcal{A}$, $\mathcal{D}_\alpha$ is the Zariski closure of $D_\alpha$ in $\mathcal{X}\hookleftarrow \mathcal{X}_{\left(0\right)}\cong X\supseteq D_\alpha$.

Given $P\in X\left(K\right)\coloneqq \operatorname{Hom}_{\operatorname{Sch}}\left(\operatorname{Spec}\left(K\right),X\right)$, the valuative criterion of properness (applied locally outside $S$ and then gluing all those local extensions) gives a unique $\mathcal{P}\in \mathcal{X}\left(\mathcal{O}_{K,S}\right)$ making the diagram\[
\begin{tikzcd}
\operatorname{Spec}\left(K\right) \arrow[d] \arrow[r, "P"]                                                               & X\cong \mathcal{X}_{\left(0\right)} \arrow[r, hook] & \mathcal{X} \arrow[d]                               \\
{\operatorname{Spec}\left(\mathcal{O}_{K,S}\right)} \arrow[rr, "\operatorname{id}"'] \arrow[rru, "\mathcal{P}"', dashed] &                                                     & {\operatorname{Spec}\left(\mathcal{O}_{K,S}\right)} \\
{\operatorname{Spec}\left(\mathcal{O}_{K,v}\right)} \arrow[u] \arrow[rruu, "\mathcal{P}_v", rounded corners, to path={ -- ([xshift=6cm]\tikztostart.east) |- (\tikztotarget)}]                                &                         ^{\mathcal{P}_v}                            &                                                    
\end{tikzcd}\]commute. For each place $v$ of $K$ outside $S$ take the $\mathcal{P}_v\in \mathcal{X}\left(\mathcal{O}_{K,v}\right)$ induced by the inclusion $\mathcal{O}_{K,S}\subseteq \mathcal{O}_{K,v}$ and define\[n_v\left(\mathcal{D}_\alpha,P\right)\coloneqq \begin{cases}+\infty,&P\in D_\alpha,\\\text{colength of the ideal of $\mathcal{O}_{K,v}$ corresponding to ${\mathcal{D}_\alpha}\times_\mathcal{X}\operatorname{Spec}\left(\mathcal{O}_{K,v}\right)$},&\mathcal{P}_v\not\subseteq \mathcal{D}_\alpha.\end{cases}\]We say that $P$ is:

\begin{itemize}

\item \textbf{Campana}, if for all $\alpha\in\mathcal{A}$ with $\varepsilon_\alpha=1$ we have $n_v\left(\mathcal{D}_\alpha,P\right)=0$, and for all $\alpha\in\mathcal{A}$ with $\varepsilon_\alpha<1$ and $n_v\left(\mathcal{D}_\alpha,P\right)>0$ we have $n_v\left(\mathcal{D}_\alpha,P\right)\geq n_\alpha$.

\item \textbf{Darmon}, if for all $\alpha\in\mathcal{A}$ with $\varepsilon_\alpha=1$ we have $n_v\left(\mathcal{D}_\alpha,P\right)=0$, and for all $\alpha\in\mathcal{A}$ with $\varepsilon_\alpha<1$ we have $n_\alpha\mid n_v\left(\mathcal{D}_\alpha,P\right)$.

\end{itemize}

For example, as shown in \cite[§3.1]{derasis2024firstorderdefinabilitydarmonpoints}, when $X=\mathbb{P}^1_\mathbb{Q}$, $\mathcal{X}=\mathbb{P}^1_\mathbb{Z}$, $D\coloneqq \left(1-\frac{1}{n}\right)\left\{x_0=0\right\}+\left\{x_1=0\right\}$ for some $n\in\mathbb{Z}_{\geq 1}$, and $S=\left\{\infty\right\}$, then the corresponding Darmon points in the affine line are the integral perfect $n$th powers up to sign. If instead $D\coloneqq \left(1-\frac{1}{n}\right)\left\{x_0=0\right\}+\left(1-\frac{1}{n}\right)\left\{x_1=0\right\}$ then we get \emph{rational} $n$th powers up to sign.

If $K$ is any number field, $S$ is a finite set of places of $K$ containing the archimedean ones, $X=\mathbb{P}^1_K$, $\mathcal{X}=\mathbb{P}^1_{\mathcal{O}_{K,S}}$, $n\in\mathbb{Z}_{\geq 1}$ and $D=\left(1-\frac{1}{n}\right)\left\{x_1=0\right\}$, then calling $C_{K,S,n}$ and $D_{K,S,n}$ the corresponding Campana and Darmon points, respectively, as shown in \cite[§2.2]{derasis2025firstorderdefinabilityaffinecampana} and \cite[§3.2]{derasis2024firstorderdefinabilitydarmonpoints} we get\begin{align*}C_{K,S,n}&=\left\{0\right\}\cup\left\{r\in K^\times:v\left(r\right)\in \mathbb{Z}_{\geq 0}\cup \mathbb{Z}_{\leq -n}\text{ for all places $v$ of $K$ outside $S$}\right\},\\D_{K,S,n}&=\left\{0\right\}\cup\left\{r\in K^\times:v\left(r\right)\in \mathbb{Z}_{\geq 0}\cup n\mathbb{Z}\text{ for all places $v$ of $K$ outside $S$}\right\}.\end{align*}These are the sets that were explored in \cite{derasis2025firstorderdefinabilityaffinecampana} and \cite{derasis2024firstorderdefinabilitydarmonpoints} and will be generalized in our new context.

\subsubsection{Other contexts in which these sets have been studied}

Even though these sets were mainly motivated by their interest in proving Manin-type conjectures and also in counting problems with height restrictions (see for instance \cite{balestrieri2023campana}, \cite{mitankin2023semiintegral}, \cite{MR4307130}, \cite{MR4405664}), they have also been studied in the context of definability and decidability. For example, in \cite{garciafritz2022nondiophantinesetsringsfunctions} Garcia-Fritz, Pasten, and Pheidas showed that Kollár's conjecture is incompatible with the diophantineness of Campana points in $\mathbb{C}\left(z\right)$, except for the trivial cases. We state the conjecture below. For each $n\in\mathbb{Z}_{\geq 1}$ we let $\mathbb{C}\left[z\right]_n$ be the $\left(n+1\right)$-dimensional $\mathbb{C}$-vector subspace of $\mathbb{C}\left[z\right]$ spanned by $\left\{1,x,\cdots,x^n\right\}$. We endow this set with the Zariski topology of $\mathbb{C}^{n+1}$.

\begin{conjecture}[Kollár]\label{kollar}If $D$ is a diophantine subset of $\mathbb{C}\left(z\right)$ containing a non-empty Zariski open subset of $\mathbb{C}\left[z\right]_n$ for infinitely many $n\in\mathbb{Z}_{\geq 1}$, then $\mathbb{C}\left(z\right)\setminus D$ is finite.
\end{conjecture}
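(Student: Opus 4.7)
The plan is to interpret the diophantine set $D$ as the image of an algebraic projection and exploit the hypothesis to force sufficient surjectivity. First I would write $D=\pi(V)$, where $V\subseteq \mathbb{A}^{m+1}_{\mathbb{C}(z)}$ is the zero locus of the defining polynomial and $\pi$ is projection to the last coordinate, so that by Chevalley's theorem $D$ is a constructible subset of $\mathbb{A}^1_{\mathbb{C}(z)}=\mathbb{C}(z)$. The hypothesis then reads: for infinitely many $n$, the intersection $D\cap \mathbb{C}[z]_n$ is Zariski-dense in $\mathbb{C}[z]_n$.

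The next step would be to attempt to bootstrap this slicewise density into global cofiniteness. One route is to use the natural action of $\mathrm{PGL}_2(\mathbb{C})$ (or at least of its affine subgroup) on $\mathbb{C}(z)$ and argue that the constructible set $D$ must be invariant, up to finite error, under enough of this action to cover all but finitely many elements of $\mathbb{C}(z)$. Another is to consider the lifted picture over $\mathbb{P}^1_\mathbb{C}$: an element of $\mathbb{C}(z)$ corresponds to a rational map $\mathbb{P}^1\dashrightarrow \mathbb{P}^1$, and $D$-membership becomes a condition on families of such maps parameterized by the generic fiber of $\pi$. In either setup, one would try to combine the coefficient-wise Zariski-openness on infinitely many degree-$n$ slices with Noetherian and irreducibility arguments on the total space $V$ to rule out the possibility that $D$ misses infinitely many rational functions.

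The hard part---and the reason this remains a conjecture---is controlling the behavior of $D$ on nonpolynomial elements of $\mathbb{C}(z)$, i.e.\ rational functions with nontrivial denominator. As a sanity check, the conjecture would imply that $\mathbb{C}[z]$ is not diophantine in $\mathbb{C}(z)$ (otherwise $D=\mathbb{C}[z]$ yields an immediate counterexample, since $\mathbb{C}[z]\cap\mathbb{C}[z]_n=\mathbb{C}[z]_n$ is Zariski-open in $\mathbb{C}[z]_n$ for every $n$, while $\mathbb{C}(z)\setminus \mathbb{C}[z]$ is infinite). This last statement is a famous open problem in its own right, analogous to the question of whether $\mathbb{Z}$ is diophantine in $\mathbb{Q}$. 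Any serious attack on Kollár's conjecture would therefore likely require substantial input from the model theory of algebraically closed function fields or from Lang--Vojta-type conjectures on rational points of varieties over $\mathbb{C}(z)$; a short, self-contained proof is not to be expected, and the conjecture is cited here purely as motivation for the diophantine questions studied in the paper.
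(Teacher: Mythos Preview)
The statement is labeled a \emph{conjecture} in the paper and is not proved there; it is attributed to Koll\'ar and invoked only as a conditional hypothesis in the proposition that follows. So there is no ``paper's own proof'' to compare against, and your final paragraph correctly identifies this: the conjecture is cited purely as motivation, and any genuine attack would require serious outside input.

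That said, the opening of your sketch contains a real error worth flagging. You assert that by Chevalley's theorem the diophantine set $D=\pi(V)$ is a constructible subset of $\mathbb{A}^1_{\mathbb{C}(z)}$. Chevalley's theorem applies to images of morphisms at the level of schemes, not to images on rational points over a field that is not algebraically closed. The Zariski-constructible subsets of the affine line over any field are precisely the finite and the cofinite subsets; if every diophantine subset of $\mathbb{C}(z)$ were constructible in this sense, Koll\'ar's conjecture (and far more) would be immediate. The failure of the projection to carry $\mathbb{C}(z)$-points onto a constructible set is exactly where the difficulty lives, so invoking Chevalley at that step obscures the problem rather than isolating it. Your later observation---that the conjecture already implies the non-diophantineness of $\mathbb{C}[z]$ in $\mathbb{C}(z)$, itself a well-known open problem---is the right sanity check and is consistent with how the paper uses the conjecture.
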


Given a finite subset $S$ of $\mathbb{P}^1_\mathbb{C}\left(\mathbb{C}\right)=\mathbb{C}\cup\left\{\widehat{\infty}\right\}$ and $n\in\mathbb{Z}_{\geq 1}\cup\left\{\infty\right\}$ define\begin{align*}C_{S,n}&\coloneqq\left\{0\right\}\cup\left\{f\in\mathbb{C}\left(z\right)^\times:\forall \alpha\in\mathbb{P}^1_\mathbb{C}\left(\mathbb{C}\right)\setminus S\,\left(\nu_\alpha\left(f\right)\in\mathbb{Z}_{\geq 0}\cup\mathbb{Z}_{\leq -n}\right)\right\},\\D_{S,n}&\coloneqq\left\{0\right\}\cup\left\{f\in\mathbb{C}\left(z\right)^\times:\forall \alpha\in\mathbb{P}^1_\mathbb{C}\left(\mathbb{C}\right)\setminus S\,\left(\nu_\alpha\left(f\right)\in\mathbb{Z}_{\geq 0}\cup n\mathbb{Z}\right)\right\},\end{align*}where we adopt the convention $\infty\mathbb{Z}=\emptyset$.

It is clear that $C_{S,1}=D_{S,1}=\mathbb{C}\left(z\right)$ is diophantine in $\mathbb{C}\left(z\right)$. Moreover, $C_{\emptyset,\infty}=D_{\emptyset,\infty}=\mathbb{C}$ is also diophantine in $\mathbb{C}\left(z\right)$ because, in general, if $L$ is a large field and $F$ is an algebraic function field in one variable over $L$, then $L$ is diophantine over $F$, by \cite[Theorem 2]{377961c2-c185-3e97-9f96-fe7ef99cbae4}. We recall that a field $L$ is \emph{large} if and only if every smooth curve over $L$ has either infinitely many or none $L$-rational points.

In \cite[Proposition 1.13]{garciafritz2022nondiophantinesetsringsfunctions} Garcia-Fritz, Pasten, and Pheidas showed that, in the case of Campana points, these are the only instances of diophantineness that can occur if Conjecture \ref{kollar} is true. For the sake of completeness, let us now show how this argument can be adapted to get an analogous proposition for Darmon points.

\begin{proposition}Assume Conjecture \ref{kollar} is true and let $S$ be a finite subset of $\mathbb{C}\cup\left\{\widehat{\infty}\right\}$ and $n\in\mathbb{Z}_{\geq 1}\cup\left\{\infty\right\}$ be such that $D_{S,n}$ is diophantine over $\mathbb{C}\left(z\right)$. Then the following hold:

\begin{enumerate}[(i)]

\item If $n=\infty$ then $S=\emptyset$.

\item If $n\neq\infty$ then $n=1$.

\end{enumerate}

\end{proposition}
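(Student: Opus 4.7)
My plan is to adapt the argument given for Campana points in \cite[Proposition 1.13]{garciafritz2022nondiophantinesetsringsfunctions}. In each part, I assume toward contradiction that $D_{S,n}$ is diophantine while lying outside the trivial configuration ($S=\emptyset$ in (i), and $n=1$ in (ii)). The common strategy is to produce a nonempty Zariski open subset of $\mathbb{C}[z]_m$ contained in $D_{S,n}$ for infinitely many $m\in\mathbb{Z}_{\geq 1}$, invoke Conjecture \ref{kollar} to deduce that $\mathbb{C}(z)\setminus D_{S,n}$ is finite, and then exhibit an infinite family outside $D_{S,n}$. The family $\{1/(z-\alpha):\alpha\in\mathbb{C}\setminus S\}$ will serve in both parts: each such function has a pole of order $1$ at $\alpha\notin S$, and $-1$ belongs neither to $\mathbb{Z}_{\geq 0}$ nor to $n\mathbb{Z}$ under our conventions.

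For part (ii), assume $n\in\mathbb{Z}_{\geq 2}$. For each positive multiple $m$ of $n$, let $U_m:=\{p\in\mathbb{C}[z]_m:\deg p=m\}$, a nonempty Zariski open subset of $\mathbb{C}[z]_m$. Any $p\in U_m$ is regular at every finite point of $\mathbb{P}^1_\mathbb{C}(\mathbb{C})$, and $\nu_{\widehat{\infty}}(p)=-m\in n\mathbb{Z}$; hence $p\in D_{S,n}$ whether or not $\widehat{\infty}\in S$. Since there are infinitely many positive multiples of $n$, Conjecture \ref{kollar} forces $\mathbb{C}(z)\setminus D_{S,n}$ to be finite, contradicting the infinite family above.

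For part (i), assume $n=\infty$ and, toward contradiction, that $S\neq\emptyset$. If $\widehat{\infty}\notin S$, then the condition $\nu_{\widehat{\infty}}(p)\geq 0$ forces any polynomial $p\in D_{S,\infty}$ to be constant, so the spaces $\mathbb{C}[z]_m$ cannot directly supply the required Zariski opens; this is the main obstacle. To circumvent it, I pick $\alpha_0\in S$ and apply a Möbius transformation swapping $\alpha_0$ with $\widehat{\infty}$. Since this is a ring automorphism of $\mathbb{C}(z)$, it preserves diophantineness, and the induced action on places sends $S$ to a set $S'$ containing $\widehat{\infty}$, with $D_{S,\infty}$ mapped to $D_{S',\infty}$. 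Replacing $S$ by $S'$, I may assume $\widehat{\infty}\in S$. Then $\mathbb{C}[z]\subseteq D_{S,\infty}$, so $\mathbb{C}[z]_m\subseteq D_{S,\infty}$ for every $m\geq 1$, and Conjecture \ref{kollar} yields a finite complement, again contradicting the infinite family $\{1/(z-\alpha):\alpha\in\mathbb{C}\setminus S\}$.
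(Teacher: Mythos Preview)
Your proof is correct and follows essentially the same approach as the paper's. The paper also reduces to the case $\widehat{\infty}\in S$ via a change of variables in part (i), and in part (ii) exhibits a Zariski open set of polynomials of degree divisible by $n$ (the paper additionally imposes $f(0)\neq 0$, which is harmless but unnecessary; your choice $U_m=\{p:\deg p=m\}$ is slightly cleaner), before invoking Conjecture \ref{kollar} and the infinite family $\{1/(z-\lambda):\lambda\in\mathbb{C}\setminus S\}$.
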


\begin{proof}Assume first $n=\infty$. If $S\neq\emptyset$, then up to a linear change of variables in $\mathbb{C}\left(z\right)$ (which does not alter diophantineness) we may assume $\widehat{\infty}\in S$. This implies that $D_{S,n}=D_{S,\infty}$ is the set of polynomials having positive valuation for all $\alpha\in\mathbb{C}\setminus S$ (recall our convention $\infty\mathbb{Z}=\emptyset$); in other words, $D_{S,n}$ is the set of $S$-integers of $\mathbb{C}\left(z\right)$, and it contains $\mathbb{C}\left[z\right]$. In particular it contains $\mathbb{C}\left[z\right]_{\ell}$ (which is itself an open Zariski subset of $\mathbb{C}\left[z\right]_{\ell}$) for all $\ell\in\mathbb{Z}_{\geq 0}$. This contradicts Conjecture \ref{kollar} because $\frac{1}{z-\lambda}\not\in D_{S,\infty}$ for all $\lambda\in\mathbb{C}\setminus S$.

Now assume $n\neq\infty$. Given any $\alpha\in\mathbb{Z}_{\geq 2}$ divisible by $n$, we claim that\[\left\{f\in\mathbb{C}\left[z\right]\setminus\left\{0\right\}:\deg\left(f\right)=\alpha\wedge f\left(0\right)\neq 0\right\}\subseteq D_{S,n}.\]Observe that the former set, under the identification $\mathbb{C}\left[z\right]_\alpha=\mathbb{C}^{\alpha+1}$, corresponds to the subset $\mathbb{C}^\times\times \mathbb{C}^{\alpha-1}\times\mathbb{C}^\times$, which is a Zariski open subset of $\mathbb{C}^{\alpha+1}$. This will imply, by Conjecture \ref{kollar}, that $D_{S,n}$ is cofinite in $\mathbb{C}\left(z\right)$; but if $n>1$ we have $\frac{1}{z-\lambda}\not\in D_{S,n}$ for all $\lambda\in\mathbb{C}\setminus S$, a contradiction.

Fix a nonzero $f\in\mathbb{C}\left[z\right]$ of degree $\alpha$. We must have $\nu_{\lambda}\left(f\right)\geq 0$ for all $\lambda\in \mathbb{C}$, and moreover $\nu_{\widehat{\infty}}\left(f\right)=-\deg\left(f\right)=-\alpha\in n\mathbb{Z}$, so $f\in D_{S,n}$, as desired. $\blacksquare$

\end{proof}

\subsection{Roadmap}

Sections \ref{valuations}, \ref{quadforms}, and \ref{dim2k-cd2} will be used to discuss the foundations of the results of this paper. We start with Section \ref{valuations} by recalling the notions of valuations and extensions, and we continue with Section \ref{quadforms} to review the basics of quadratic forms and Witt equivalence, an important part of which will be the study of quadratic Pfister forms. We will conclude the preliminary sections by discussing $p$-th cohomological dimension (for a given prime $p>0$) in Section \ref{valuations}. We combine these sections to state and understand the Reciprocity Theorem of the Witt group (Theorem \ref{recthm}) in Section \ref{reciprocitypfister}.

Once these are established, in Section \ref{methodology} we will give an overview of the method we will use to attain our desired first-order definitions, and will prove the required lemmas that ensure its efficacy. More concretely, we will plan to reduce our problem to the case of \emph{regular} algebraic function fields in one variable over \emph{non-real} number fields, for which we will need a reduction step (given by Theorem \ref{reductionstep}) and then some sanity checks in terms of first-order interpretation (Section \ref{coopolynomials}) and non-ramification (Section \ref{nonramification}). Finally, we begin Section \ref{setupandproofs} by setting up the notations and preliminary first-order descriptions that we will use in Section \ref{provingcampana} and Section \ref{provingdarmon} to demonstrate Theorem \ref{main}.

\section{Acknowledgments}

I thank my supervisor Dr. Jennifer Park for her guidance, suggestions, proof-readings, and comments, and for being the principal investigator of the NSF grant DMS-2152182 that funded this research. I also thank the \emph{Hausdorff Research Institute for Mathematics} from Universität Bonn, funded by the DFG under Germany's Excellence Strategy (EXC-2047/1 - 390685813), for having hosted the $2025$ trimester program \emph{Definability, decidability, and computability}, organized by Valentina Harizanov, Philipp Hieronymi, Jennifer Park, Florian Pop, and Alexandra Shlapentokh, in which I could solve all my difficulties regarding the content of this paper. I also thank Dr. Philip Dittmann for the extremely useful conversations we held during that program (which were the main reason why I could finally resolve all my issues and better understand his and his coauthors papers, which offer the foundations on which the current paper relies) and for his useful comments after having read a first draft. Finally, I thank my colleague Hunter Handley for his careful proof-reading and corrections.

\section{$\mathbb{Z}$-valuations on algebraic function fields in one variable}\label{valuations}

Let us recall the following definitions from field theory:

\begin{definition}If $K$ is a field and $L$ is a field extension of $K$, we say that the extension $L/K$ is \emph{regular} if and only if it is separable and the only elements of $L$ algebraic over $K$ are the elements of $K$ itself.
\end{definition}

\begin{definition}Given a field $K$, an \emph{algebraic function field in one variable over $K$} is a field $F$ such that $F/K$ is a finitely generated field extension of transcendence degree $1$.
\end{definition}

Now let us fix some notation for later use. Given a field $K$ and a discrete valuation $v$ on $K$, we denote\begin{align*}\mathcal{O}_v&\coloneqq \left\{x\in K:v\left(x\right)\geq 0\right\}\text{ (the \emph{valuation ring} associated to $v$)},\\\mathfrak{m}_v&\coloneqq \left\{x\in K:v\left(x\right)>0\right\}\text{ (the \emph{maximal ideal} of $v$)},\\Kv&\coloneqq \frac{\mathcal{O}_v}{\mathfrak{m}_v}\text{ (the \emph{residue field} of $v$)}.\end{align*}Observe that, under this notation, $\mathcal{O}_v$ is a local ring with maximal ideal $\mathfrak{m}_v$, thus we get $\mathcal{O}_v^\times=\mathcal{O}_v\setminus\mathfrak{m}_v=\left\{x\in K:v\left(x\right)=0\right\}$. Finally, we denote $K_v$ (not to be confused with the residue field $Kv$) as the \emph{henselization} of $K$ with respect to $v$; that is, the separable closure of $K$ in its completion with respect to $v$.

Recall that a $\mathbb{Z}$-valuation $v$ on a field $K$ is a discrete valuation such that for every $n\in\mathbb{Z}$ there exists $x\in K$ such that $n=v\left(x\right)$. If $L$ is a finite field extension of $K$ and $w$ is a $\mathbb{Z}$-valuation on $L$, we say that \emph{$w$ lies over $v$} (or that \emph{$v$ lies under $w$}) if and only if $\mathcal{O}_w\cap K=\mathcal{O}_v$. In this case, there exists $e\left(w\mid v\right)\in\mathbb{Z}_{\geq 1}$ such that $w\left(x\right)=e\left(w\mid v\right)v\left(x\right)$ for every $x\in K$. The number $e\left(w\mid v\right)$ is called the \emph{ramification index of $w$ over $v$}. We say that $v$ is \emph{unramified in $L$} if $e\left(w\mid v\right)=1$ for every $\mathbb{Z}$-valuation on $L$ lying over $K$. It is well-known that any $\mathbb{Z}$-valuation on $K$ lies under only finitely many (and at least one) $\mathbb{Z}$-valuations on $L$.

We refer the reader to \cite{Neukirch1999} and \cite{Fried2023} for more properties of valuations. In our case, we will be considering $\mathbb{Z}$-valuations on algebraic function fields in one variable.

Let $K$ be a field, and let $F$ be an algebraic function field in one variable over $K$. We let $\mathcal{V}\left(F/K\right)$ be the set of $\mathbb{Z}$-valuations on $F$ which are trivial on $K$, and denote $\mathcal{O}_{F/K}\coloneqq \displaystyle{\bigcap_{v\in\mathcal{V}\left(F/K\right)}\mathcal{O}_v}$. If $L$ is a finite extension of $K$ contained in the algebraic closure of $F$, then $\left[FL:F\right]\leq \left[L:K\right]$ and therefore\begin{multline*}\operatorname{tr}\deg\left(FL/L\right)=\operatorname{tr}\deg\left(FL/K\right)-\operatorname{tr}\deg\left(L/K\right)=\operatorname{tr}\deg\left(FL/K\right)\\=\operatorname{tr}\deg\left(FL/F\right)+\operatorname{tr}\deg\left(F/K\right)=\operatorname{tr}\deg\left(F/K\right)=1,\end{multline*}thus $FL$ is an algebraic function field in one variable over $L$, and every $v\in\mathcal{V}\left(F/K\right)$ lies under only finitely many (and at least one) elements of $\mathcal{V}\left(FL/L\right)$.

We now define the objects we desire to describe with first-order language:

\begin{definition}Let $K$ be a field and let $F$ be an algebraic function field in one variable over $K$. For every $n\in\mathbb{Z}_{\geq 1}$ and every finite subset $S$ of $\mathcal{V}\left(F/K\right)$ we define\begin{align*}C_{F/K,S,n}&\coloneqq\left\{0\right\}\cup\left\{x\in F^\times:v\left(x\right)\in\mathbb{Z}_{\geq 0}\cup\mathbb{Z}_{\leq -n}\text{ for all $v\in\mathcal{V}\left(F/K\right)\setminus S$}\right\},\\D_{F/K,S,n}&\coloneqq\left\{0\right\}\cup\left\{x\in F^\times:v\left(x\right)\in\mathbb{Z}_{\geq 0}\cup n\mathbb{Z}\text{ for all $v\in\mathcal{V}\left(F/K\right)\setminus S$}\right\}.\end{align*}The set $C_{F/K,S,n}$ is called \emph{the set of $n$-Campana points of $F/K$ with respect to $S$}, while the set $D_{F/K,S,n}$ is called \emph{the set of $n$-Darmon points of $F/K$ with respect to $S$}.
\end{definition}

\section{Quadratic Forms and Witt Equivalence}\label{quadforms}

Given a field $K$, let us recall that a \emph{quadratic form over $K$} is a function $q:V\to K$ (where $V$ is a finite-dimensional vector space over $K$) such that $q\left(\lambda v\right)=\lambda^2 v$ for all $\lambda\in K$ and $v\in V$, and such that the map $V\times V\to K$ given by $\left(v,w\right)\mapsto q\left(v+w\right)-q\left(v\right)-q\left(w\right)$ for all $\left(v,w\right)\in V\times V$ is $K$-bilinear. We denote $\dim_K\left(q\right)\coloneqq \dim_K\left(V\right)$. We say that $q$ is \emph{isotropic} if $q\left(v\right)=0$ for some $v\in V\setminus\left\{0\right\}$, and \emph{anisotropic} otherwise. We say that $q$ is \emph{non-singular} if for every $v\in V\setminus\left\{0\right\}$ there exists $w\in V$ such that $q\left(v+w\right)\neq q\left(v\right)+q\left(w\right)$.

Given two finite-dimensional $K$-vector spaces $V_1$ and $V_2$, and quadratic forms $q_1:V_1\to K$ and $q_2:V_2\to K$ over $K$, we define the \emph{orthogonal product of $q_1$ and $q_2$} as the quadratic form $q_1\perp q_2:V_1\oplus V_2\to K$ given by $\left(v_1,v_2\right)\mapsto q_1\left(v_1\right)+q_2\left(v_2\right)$ for every $\left(v_1,v_2\right)\in V_1\times V_2$. Also, we say that $q_1$ and $q_2$ are \emph{isometric} if and only if there exists a $K$-isomorphism $\varphi:V_1\to V_2$ such that $q_2\circ \varphi=q_1$.

For a quadratic form $q:V\to K$ on a finite-dimensional vector space $V$ over a field $K$ we define\[\operatorname{rad}\left(q\right)\coloneqq \left\{v\in V:q\left(v\right)=0\wedge\forall w\in V\left(q\left(v+w\right)=q\left(v\right)+q\left(w\right)\right)\right\},\]which is a $K$-vector subspace of $V$, and it is the null space if $q$ is non-singular. We also define the quadratic form $\mathbb{H}\left(V\right):V\oplus V^\ast\to K$ as $\mathbb{H}\left(V\right)\left(v,\sigma\right)\coloneqq\sigma\left(v\right)$ for every $\left(v,\sigma\right)\in V\times V^\ast$. A quadratic form isometric to $\mathbb{H}\left(W\right)$ for some finite-dimensional vector space $W$ of $K$ is called \emph{hyperbolic}. Observe that hyperbolic quadratic forms are even-dimensional and closed under orthogonal product.

\begin{lemma}\label{wittinversion}Let $K$ be a field and let $q$ be a non-singular quadratic form over $K$. Then there exists another non-singular quadratic form $q'$ over $K$ such that $q\perp q'$ is hyperbolic and $\dim_K\left(q\right)\equiv\dim_K\left(q'\right)\pmod{2}$.
\end{lemma}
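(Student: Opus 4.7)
The plan is to take $q' \coloneqq -q$, the quadratic form on the same underlying vector space $V$ sending $v \mapsto -q(v)$ (which, in characteristic $2$, coincides with $q$ itself). Then $\dim_K q' = \dim_K q$, so the parity requirement $\dim_K q \equiv \dim_K q' \pmod{2}$ is immediate, and $q'$ is non-singular because its associated polar bilinear form is $-B_q$, whose radical coincides with that of $B_q$. The only nontrivial task is therefore to show that $q \perp q'$ is hyperbolic.

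In characteristic different from $2$, I would diagonalize $q \cong \langle a_1, \ldots, a_n \rangle$ with $a_i \in K^\times$ using the standard Gram-Schmidt argument available because $B_q$ is non-degenerate. Then
\[
q \perp q' \;\cong\; \langle a_1, -a_1\rangle \perp \cdots \perp \langle a_n, -a_n\rangle,
\]
and each binary summand $\langle a_i, -a_i\rangle \cong \langle 1, -1\rangle$ is the hyperbolic plane $\mathbb{H}(K)$ (via the substitution $(x,y) \mapsto (x, y/a_i)$). Since orthogonal sums of hyperbolic forms are hyperbolic, $q \perp q'$ is hyperbolic, as required.

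In characteristic $2$, the polar bilinear form $B_q$ is alternating, and its non-degeneracy (forced by non-singularity) makes $\dim_K q = 2m$ even and produces a symplectic basis on which $q$ decomposes as an orthogonal sum $\perp_{i=1}^{m} [a_i, b_i]$ of binary forms $[a,b](x,y) = ax^2 + xy + by^2$. It then suffices to verify summand-by-summand that $[a,b] \perp [a,b]$ is hyperbolic: the change of coordinates $(x_1, y_1, x_2, y_2) \mapsto (x_1 + x_2, \, y_1, \, x_2, \, y_1 + y_2)$, combined with the characteristic-$2$ identity $(u+v)^2 = u^2 + v^2$, brings $[a,b] \perp [a,b]$ into the form $[a,0] \perp [0,b]$, and each of $[a,0](x,y) = ax^2 + xy = x(ax+y)$ and $[0,b](x,y) = xy + by^2$ is a non-singular binary form with an explicit isotropic vector, hence a hyperbolic plane.

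The only genuine obstacle is the characteristic $2$ case: one cannot diagonalize, so the shortcut $q \perp (-q) = \perp \langle a_i, -a_i\rangle$ is unavailable, and an explicit linear substitution in each four-dimensional piece $[a,b]\perp[a,b]$ must be carried out. Everything else follows from standard structural facts: the existence of an orthogonal decomposition into $1$- or $2$-dimensional non-singular pieces and the closure of hyperbolic forms under orthogonal product stated earlier in this section.
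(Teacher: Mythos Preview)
Your proof is correct and follows essentially the same approach as the paper: decompose $q$ into rank-$1$ pieces $\langle a\rangle$ (in characteristic $\neq 2$) or rank-$2$ pieces $[a,b]$ (in characteristic $2$), and verify each piece pairs with its negative to give a hyperbolic form. The only cosmetic difference is that you declare $q'=-q$ up front and provide an explicit change of variables for $[a,b]\perp[a,b]\cong[a,0]\perp[0,b]$, whereas the paper cites the identities $[a,b]\perp[a,-b]\cong[a,0]\perp\mathbb{H}(K)$ and $[a,0]\cong\mathbb{H}(K)$ from \cite[Examples 7.23 and 7.5]{Elman2008}; since $-b=b$ in characteristic $2$ and $[0,b]\cong\mathbb{H}(K)$, these amount to the same computation.
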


\begin{proof}Assume fist $\operatorname{Char}\left(K\right)\neq 2$. By the diagonalization theorem for quadratic forms on characteristic different from $2$ (\cite[Proposition 7.29]{Elman2008}) and the fact that $q$ is non-singular, $q$ is isometric to the orthogonal product of quadratic forms $K\to K$ given by $x\mapsto ax^2$ for every $x\in K$, where $a\in K^\times$. Calling $\left\langle a\right\rangle$ such a quadratic form for a given $a\in K^\times$, the first conclusion follows from the fact that $\left\langle a\right\rangle\perp \left\langle -a\right\rangle$ is hyperbolic (\cite[Example 7.26]{Elman2008}). The second conclusion is a direct consequence of hyperbolic forms always being even-dimensional.

The proof when $\operatorname{Char}\left(K\right)=2$ is essentially the same; with the additional task of dealing with the more complicated version of the orthogonal factorization of quadratic forms on fields of characteristic $2$ (\cite[Proposition 7.31]{Elman2008}), since we have to also consider orthogonal factors $K\times K\to K$ of the form $\left(x,y\right)\mapsto ax^2+xy+by^2$ with $a,b\in K$. Calling $\left[a,b\right]$ such a quadratic form for given $a,b\in K$, \cite[Example 7.23]{Elman2008} gives $\left[a,b\right]\perp \left[a,-b\right]\cong \left[a,0\right]\perp \mathbb{H}\left(K\right)$, and \mbox{\cite[Example 7.5]{Elman2008}} gives $\left[a,0\right]\cong \mathbb{H}\left(K\right)$. $\blacksquare$
\end{proof}

\begin{theorem}[Witt Decomposition Theorem]\label{wittdecom}Let $V$ be a finite-dimensional vector space over a field $K$ and let $q:V\to K$ be a quadratic form over $K$. Then there exist $K$-vector subspaces $U$ and $W$ of $V$ such that $V=\operatorname{rad}\left(q\right)\oplus U\oplus W$ and\[q=q\restrict {\operatorname{rad}\left(q\right)}\perp q\restrict U\perp q\restrict W,\]where $q\restrict U$ is anisotropic and $q\restrict W$ is hyperbolic. Moreover, such forms $q\restrict U$ and $q\restrict {W}$ are unique up to isometry.
\end{theorem}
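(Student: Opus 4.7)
The plan is to prove existence in two stages---first split off the radical to reduce to the non-singular case, then induct on dimension to peel off hyperbolic planes until only an anisotropic summand remains---and then to deduce uniqueness from Witt cancellation together with an invariance argument for the Witt index.

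For existence, I would first choose any $K$-subspace $V_0 \subseteq V$ complementary to $\operatorname{rad}(q)$. By definition of the radical, $q(r + v) = q(r) + q(v) = q(v)$ for all $r \in \operatorname{rad}(q)$ and $v \in V_0$, giving $q = q\restrict{\operatorname{rad}(q)} \perp q\restrict{V_0}$ with $q\restrict{V_0}$ non-singular. I would then induct on $\dim_K V_0$: if $q\restrict{V_0}$ is anisotropic, take $U = V_0$ and $W = 0$; otherwise pick $v \in V_0 \setminus \{0\}$ with $q(v) = 0$, and use non-singularity to find $w' \in V_0$ with $b_q(v, w') \neq 0$, where $b_q(x, y) \coloneqq q(x + y) - q(x) - q(y)$. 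A normalization of $w'$ (in characteristic $2$ invoking the identity $[a, 0] \cong \mathbb{H}(K)$ from \cite[Example 7.5]{Elman2008}) yields $w \in V_0$ such that $H \coloneqq \spans(v, w)$ is a hyperbolic plane; then $V_0 = H \oplus H^\perp$ splits the form orthogonally, and the inductive hypothesis applied to $q\restrict{H^\perp}$, combined with $H$ in the hyperbolic summand, produces the required $U$ and $W$.

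For uniqueness, suppose $V = \operatorname{rad}(q) \oplus U_1 \oplus W_1 = \operatorname{rad}(q) \oplus U_2 \oplus W_2$ are two such splittings. The dimension of the hyperbolic summand equals twice the Witt index of $q\restrict{V_0}$---an intrinsic invariant, namely the maximum dimension of a totally isotropic subspace of $V_0$---so $\dim_K W_1 = \dim_K W_2$, whence $q\restrict{W_1} \cong q\restrict{W_2}$ because any two hyperbolic forms of equal dimension are isometric. Then Witt's cancellation theorem, applied to the isometry $q\restrict{U_1} \perp q\restrict{W_1} \cong q\restrict{U_2} \perp q\restrict{W_2}$, yields $q\restrict{U_1} \cong q\restrict{U_2}$.

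The main obstacle will be the construction of the hyperbolic plane in characteristic $2$: there the bilinear form $b_q$ is alternating, its radical can properly contain $\operatorname{rad}(q)$, and an isotropic vector need not automatically extend to a hyperbolic pair. Overcoming this requires combining the characteristic-$2$ orthogonal decomposition of \cite[Proposition 7.31]{Elman2008} with the identities of \cite[Examples 7.5 and 7.23]{Elman2008} to identify the relevant binary subform as $\mathbb{H}(K)$, exactly in the spirit of the second half of the proof of Lemma \ref{wittinversion}.
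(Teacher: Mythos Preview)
The paper does not prove this statement at all; it simply cites \cite[Theorem 8.5]{Elman2008}. Your sketch therefore supplies strictly more than the paper does, and its overall shape---split off $\operatorname{rad}(q)$, peel off hyperbolic planes, invoke Witt cancellation for uniqueness---is the standard argument found in that reference.

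Two small points are worth tightening. First, in characteristic $2$ the restriction $q\restrict{V_0}$ need not be \emph{non-singular} in the paper's sense (for instance $q(x)=ax^2$ on a line has $\operatorname{rad}(q)=0$ but $b_q\equiv 0$); what you actually have, and what you actually need, is $\operatorname{rad}(q\restrict{V_0})=0$. That alone suffices: if $v\in V_0\setminus\{0\}$ is isotropic then by definition of $\operatorname{rad}$ some $w\in V_0$ satisfies $b_q(v,w)\neq 0$, and your normalization produces a hyperbolic plane directly---so the obstacle you flag at the end does not in fact arise for \emph{isotropic} vectors. Second, your uniqueness argument via ``the Witt index equals the maximal totally isotropic dimension'' is correct but not as freely intrinsic as you suggest; establishing that equality already uses the same machinery as Witt cancellation. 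It is cleaner to bypass the Witt index entirely: from $q\restrict{U_1}\perp q\restrict{W_1}\cong q\restrict{U_2}\perp q\restrict{W_2}$ with, say, $\dim W_1\leq\dim W_2$, cancel $q\restrict{W_1}$ to obtain $q\restrict{U_1}\cong q\restrict{U_2}\perp\mathbb{H}^{(\dim W_2-\dim W_1)/2}$, and anisotropy of $q\restrict{U_1}$ forces $\dim W_1=\dim W_2$ and $q\restrict{U_1}\cong q\restrict{U_2}$ in one stroke.
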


\begin{proof}See \cite[Theorem 8.5]{Elman2008}. $\blacksquare$
\end{proof}

Theorem \ref{wittdecom} implies that for every quadratic form $q$ over a field $K$ we have a well-defined isometry class $q_{\operatorname{an}}$ given by the isometry class of the anisotropic part of the Witt decomposition of $q$. If $q'$ is another quadratic form over $K$, we say that $q$ and $q'$ are \emph{Witt equivalent} if $\dim_K\left(\operatorname{rad}\left(q\right)\right)=\dim_K\left(\operatorname{rad}\left(q'\right)\right)$ and $q_{\operatorname{an}}=q'_{\operatorname{an}}$. Since this equivalence is respected by orthogonal products, we have a well-defined monoid. Lemma \ref{wittinversion} guarantees that the monoid operation has inverses, and that these are well-defined even if we restrict ourselves to even-dimensional quadratic forms. We then have the following well-defined group:

\begin{definition}Given a field $K$, we define the \emph{Witt Group} of $K$ as the set of Witt classes of the non-singular even-dimensional quadratic forms over $K$, with orthogonal product as its group operation. We denote $WG\left(K\right)$ this group.
\end{definition}

Given any quadratic form $q$ we will denote $\left[q\right]$ its Witt class.

\subsection{Quadratic Pfister forms}\label{pfisterintro}

The quadratic forms that will play an essential role in this paper are the \emph{Quadratic Pfister forms}, which we now define.

\begin{definition}Given a field $K$ and $b\in K$ such that $1+4b\neq 0$, we define the $2$-dimensional quadratic form $\llangle b]]_K:K\times K\to K$ as $\left(x,y\right)\mapsto x^2-xy-by^2$ for every $\left(x,y\right)\in K\times K$. Such a form is called \emph{a $1$-fold quadratic Pfister form over $K$}. Recursively, if for some $n\in\mathbb{Z}_{\geq 1}$ and $a_1,\cdots,a_{n-1}\in K^\times$ and $b\in K$ with $1+4b\neq 0$ we have defined an $n$-fold quadratic Pfister form $q=\llangle a_1,\cdots,a_{n-1},b]]_K:V\to K$ over $K$ for some $K$-vector space $V$ of dimension $2^{n}$, for $a_n\in K^\times$ we define the quadratic form\begin{equation}\label{defpfister}\begin{matrix}\llangle a_1,\cdots,a_n,b]]_K&:&V\times V&\to&K,\\&&\left(v_1,v_2\right)&\mapsto&q\left(v_1\right)-a_nq\left(v_2\right).\end{matrix}\end{equation}We call this \emph{an $\left(n+1\right)$-fold quadratic Pfister form over $K$.}
\end{definition}

The first important fact about quadratic Pfister forms is that their Witt decomposition has only two possibilities.

\begin{proposition}\label{wittdecompfister}Given a field $K$ and a quadratic Pfister form $q$ over $K$, either $q$ is anisotropic or $q$ is hyperbolic.
\end{proposition}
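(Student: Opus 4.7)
The plan is to proceed by induction on the number of folds $n$ of the Pfister form. The crucial auxiliary fact we will need to establish in parallel is the \emph{roundness} of quadratic Pfister forms: if $c \in K^\times$ is represented by a Pfister form $q$, then the scaled form $c\cdot q$ (i.e.\ $v\mapsto c\cdot q(v)$) is isometric to $q$. This implies in particular that the set of nonzero values represented by $q$ is a subgroup of $K^\times$, and it is this multiplicative property that ultimately forces the anisotropic/hyperbolic dichotomy.

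For the base case $n=1$, the form $\llangle b]]_K$ is non-singular: the condition $1+4b\neq 0$ makes its associated symmetric bilinear form non-degenerate in characteristic different from $2$, while in characteristic $2$ the associated bilinear form has Gram matrix $\left(\begin{smallmatrix}0&1\\1&0\end{smallmatrix}\right)$ and is therefore automatically non-degenerate. A standard basis-change argument then shows that a non-singular binary quadratic form is either anisotropic or hyperbolic (one completes any isotropic vector to a hyperbolic pair using the non-degenerate bilinear form), giving the base case.

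For the inductive step, given an $n$-fold Pfister form $q$ with $n\geq 2$, use the recursive definition \eqref{defpfister} to write $q = q'\perp (-a)\, q'$ where $q'$ is an $(n-1)$-fold Pfister form and $a\in K^\times$. Assume $q$ is isotropic. If $q'$ is itself isotropic, the inductive hypothesis makes $q'$ hyperbolic, so the scalar multiple $-a\,q'$ is hyperbolic as well, and hence so is the orthogonal sum $q$. If instead $q'$ is anisotropic, the isotropy of $q$ yields vectors $v_1,v_2$, not both zero, with $q'(v_1)=a\,q'(v_2)$; since $q'$ is anisotropic, $v_2\neq 0$ is forced, so $a = q'(v_1)/q'(v_2)$ lies in the set of nonzero values represented by $q'$. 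Applying roundness to $q'$ gives $q' \cong a\,q'$, whence $q \cong q' \perp (-q')$, which is hyperbolic since any non-singular form orthogonally summed with its negative is hyperbolic (a fact already implicit in the proof of Lemma \ref{wittinversion}).

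The main obstacle is proving the roundness statement itself. Its inductive step is the delicate part: given that $c = q'(v_1) - a\,q'(v_2)$ is represented by $q = q'\perp (-a)q'$, one must exhibit an explicit isometry $c\,q\cong q$. The strategy is a case analysis on whether $q'(v_2)=0$ or not, combining the inductive hypothesis of roundness for $q'$ (which yields $q'(v_i)\,q'\cong q'$ whenever $q'(v_i)\neq 0$) with algebraic manipulations of the recursive structure; in the degenerate subcases one exploits the fact that the ambient form is itself isotropic and appeals to the dichotomy at lower fold. Careful bookkeeping in characteristic $2$ of the kind encountered in the proof of Lemma \ref{wittinversion} is required throughout. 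Once roundness is established, the dichotomy above follows cleanly from the case analysis.
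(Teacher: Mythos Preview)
Your argument is correct and is precisely the standard proof via roundness (multiplicativity) of Pfister forms. The paper, however, does not give its own proof of this proposition: it simply cites \cite[Corollary 9.10]{Elman2008}. Your sketch is in fact the argument underlying that reference, so there is nothing to compare beyond noting that you have supplied the details the paper chose to outsource.

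One small comment on your write-up: in the anisotropic-$q'$ subcase you should make explicit that $v_1\neq 0$ as well (otherwise $a=0$), and that the quotient $q'(v_1)/q'(v_2)$ being a represented value uses that the nonzero values of $q'$ form a \emph{group} (closure under inverses comes from scaling by squares together with roundness). You allude to this but it is worth stating, since roundness as you phrase it only immediately gives closure under multiplication.
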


\begin{proof}See \cite[Corollary 9.10]{Elman2008}. $\blacksquare$
\end{proof}

A direct consequence of Proposition \ref{wittdecompfister} is that a quadratic Pfister form is anisotropic if and only if it has nontrivial Witt class. We now split the Witt classes of these forms into levels.

\begin{definition}Given a field $K$, we define $WG^1\left(K\right)\coloneqq WG\left(K\right)$ and, for $n\in\mathbb{Z}_{\geq 2}$, we let $WG^n\left(K\right)$ be the subgroup of $WG\left(K\right)$ generated by the Witt classes of all $n$-fold quadratic Pfister forms over $K$.
\end{definition}

\begin{remark}\label{witt-pfister-chain}If $K$ is a field and $n\in\mathbb{Z}_{\geq 1}$, by \eqref{defpfister} we have $WG^{n+1}\left(K\right)\subseteq WG^n\left(K\right)$. Since the Witt group is abelian, we can define the quotient group $WG^n\left(K\right)/WG^{n+1}\left(K\right)$.
\end{remark}

If $K$ is a field, $n\in\mathbb{Z}_{\geq 1}$, $a_1,\cdots,a_n\in K^\times$, and $b\in K$ is such that $1+4b\neq 0$, then \eqref{defpfister} gives\[\llangle a_1,\cdots,a_n,b]]_K\perp \llangle a_1,\cdots,a_n,b]]_K=\llangle a_1,\cdots,a_n,-1,b]]_K,\]so we immediately get:

\begin{lemma}\label{2-torsion}Let $K$ be a field and $n\in\mathbb{Z}_{\geq 1}$. The group $WG^n\left(K\right)/WG^{n+1}\left(K\right)$ is $2$-torsion.
\end{lemma}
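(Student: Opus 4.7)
The plan is to reduce the claim to checking that the class of $2[q]$ lies in $WG^{n+1}(K)$ only on the generators $[q]$ of $WG^n(K)$, and then verify this directly by recognizing $q\perp q$ as an $(n+1)$-fold Pfister form via the recursive definition~\eqref{defpfister}.

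First I would note that since doubling $x\mapsto 2x$ is a group homomorphism on the abelian group $WG^n(K)$, and $WG^{n+1}(K)$ is a subgroup, the subset $\{x\in WG^n(K):2x\in WG^{n+1}(K)\}$ is itself a subgroup. Hence it suffices to check $2[q]\in WG^{n+1}(K)$ for $[q]$ ranging over a generating set of $WG^n(K)$, i.e.\ for every Witt class of an $n$-fold Pfister form $q=\llangle a_1,\ldots,a_{n-1},b]]_K$ (with $a_1,\ldots,a_{n-1}\in K^\times$ and $1+4b\neq 0$).

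Next, I would apply the recursive definition~\eqref{defpfister} with the choice $a_n=-1\in K^\times$: the $(n+1)$-fold Pfister form $\llangle a_1,\ldots,a_{n-1},-1,b]]_K$ acts on $(v_1,v_2)$ by $q(v_1)-(-1)q(v_2)=q(v_1)+q(v_2)$, which is by definition $(q\perp q)(v_1,v_2)$. Therefore $q\perp q$ is isometric to an $(n+1)$-fold quadratic Pfister form over $K$, and in particular
\[2[q]=[q\perp q]=[\llangle a_1,\ldots,a_{n-1},-1,b]]_K]\in WG^{n+1}(K).\]
Combined with the previous reduction this proves that $2x\in WG^{n+1}(K)$ for every $x\in WG^n(K)$, i.e.\ the quotient $WG^n(K)/WG^{n+1}(K)$ is $2$-torsion.

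There is no real obstacle here: the computational identity displayed just before the statement of the lemma does all the work, and the only subtlety worth mentioning explicitly is the reduction to generators, which relies on the fact that $WG^{n+1}(K)$ is, by the definition above, a \emph{subgroup} of $WG^n(K)$ (cf.\ Remark~\ref{witt-pfister-chain}) rather than merely a subset. Everything else is formal.
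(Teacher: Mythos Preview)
Your proof is correct and takes essentially the same approach as the paper: both rest on the identity $q\perp q\cong\llangle a_1,\ldots,a_{n-1},-1,b]]_K$ obtained from the recursive definition~\eqref{defpfister} with the new slot set to $-1$, which is exactly the displayed computation immediately preceding the lemma. You spell out the reduction to generators (using that $\{x:2x\in WG^{n+1}(K)\}$ is a subgroup), whereas the paper just writes ``so we immediately get''; that is the only difference, and it is purely expository.
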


Witt equivalence is stable under field extensions. This is, given a field $K$ and two quadratic forms $q:V\to K$ and $q':V'\to K$, where $V$ and $V'$ are finite-dimensional $K$-vector spaces such that $\left[q\right]=\left[q'\right]\in WG\left(K\right)$, we then have $\left[q_L\right]=\left[q'_L\right]\in WG\left(L\right)$, where for any quadratic form $\varphi:W\to K$ over $K$ where $W$ is a finite-dimensional $K$-vector space we define $\varphi_L:W\otimes_KL\to L$ to be the quadratic form over $L$ satisfying $\varphi_L\left(w\otimes \lambda\right)=\lambda^2\varphi\left(w\right)$ for every $\left(w,\lambda\right)\in W\times L$. Moreover, if $n\in\mathbb{Z}_{\geq 1}$ and $\left[\varphi\right]\in WG\left(K\right)$ equals the Witt class of some $n$-fold quadratic Pfister form over $K$, then $\left[\varphi_L\right]\in WG\left(L\right)$ equals the Witt class of some $n$-fold quadratic Pfister form over $L$. We then have the following well-defined group homomorphism:

\begin{proposition}For a field $K$ and a field extension $L$ over $K$, the map $WG\left(K\right)\to WG\left(L\right)$ given by $\left[q\right]\mapsto \left[q_L\right]$ for every quadratic form $q$ over $K$ is called \emph{the restriction homomorphism with respect to $L/K$}. This is a well-defined group homomorphism which for every $n\in\mathbb{Z}_{\geq 1}$ restricts and co-restricts to a group homomorphism $WG^n\left(K\right)\to WG^n\left(L\right)$.
\end{proposition}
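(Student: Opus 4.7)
The plan is to verify the three assertions in turn: the base-change map on Witt classes is well-defined, it is a group homomorphism, and it preserves the Pfister filtration $\{WG^n\}_{n\geq 1}$.

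For well-definedness, I would first observe that the explicit formula $q_L(v\otimes\lambda)=\lambda^2 q(v)$ (extended by the bilinearity requirement on the associated form) produces a non-singular even-dimensional quadratic form over $L$ whenever $q$ is such over $K$, since $\dim_L(V\otimes_KL)=\dim_KV$ and non-singularity is detected on any basis. The two key structural inputs are that base change commutes with orthogonal sums, $(q_1\perp q_2)_L\cong (q_1)_L\perp (q_2)_L$, which follows from the canonical splitting $(V_1\oplus V_2)\otimes_KL\cong (V_1\otimes_KL)\oplus(V_2\otimes_KL)$, and that base change sends hyperbolic forms to hyperbolic forms, $\mathbb{H}(W)_L\cong \mathbb{H}(W\otimes_KL)$, which is a direct computation. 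Combined with Theorem \ref{wittdecom} and the evident fact that $K$-isometries extend to $L$-isometries after tensoring, these observations force Witt-equivalent forms over $K$ to have Witt-equivalent base changes over $L$. The homomorphism property $[q\perp q']\mapsto [q_L]+[q'_L]$ is then immediate from the same compatibility $(q\perp q')_L\cong q_L\perp q'_L$.

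For the last assertion, I would show by induction on $n$ that the base change of an $n$-fold Pfister form satisfies
\[
(\llangle a_1,\ldots,a_n,b]]_K)_L\cong \llangle a_1,\ldots,a_n,b]]_L,
\]
where on the right the parameters $a_i, b$ are viewed inside $L$ via $K\hookrightarrow L$ (note that $1+4b\neq 0$ in $K$ implies $1+4b\neq 0$ in $L$). The case $n=1$ is direct from the defining formula $(x,y)\mapsto x^2-xy-by^2$, and the inductive step drops out of the recursive construction \eqref{defpfister}: both the underlying vector space and the rule $(v_1,v_2)\mapsto q(v_1)-a_nq(v_2)$ are manifestly natural in the field. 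Since $WG^n(K)$ is generated (as an abelian group) by Witt classes of $n$-fold Pfister forms, and the base-change homomorphism sends each such generator into $WG^n(L)$, it restricts and corestricts to a homomorphism $WG^n(K)\to WG^n(L)$.

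I do not expect a serious obstacle: the statement is essentially a naturality check, and the inputs (Theorem \ref{wittdecom}, the formulas for orthogonal product, hyperbolic forms, and Pfister forms) are all visibly functorial in the field. The only point that requires mild extra care is characteristic $2$, where one must also confirm that the non-singular binary blocks $[a,b]$ appearing in the classification used in Lemma \ref{wittinversion} base-change correctly; this is again a direct calculation from the defining formula $(x,y)\mapsto ax^2+xy+by^2$.
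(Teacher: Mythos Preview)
The paper does not supply a proof of this proposition; it is stated immediately after a paragraph that simply asserts (without verification) the two key facts, namely that Witt equivalence is preserved under base change and that $n$-fold Pfister classes over $K$ go to $n$-fold Pfister classes over $L$. Your proposal correctly fills in the routine verifications behind those assertions and follows exactly the same line of thought, so there is nothing to add.
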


A key concept regarding the quotient group defined in Remark \ref{witt-pfister-chain} is that of \emph{linkage} of quadratic Pfister forms, which says that Pfister forms themselves are sufficient to characterize the elements of the quotient.

\begin{definition}\label{linkage}Given a field $K$ and $n\in\mathbb{Z}_{\geq 1}$, we say that $n$-fold quadratic Pfister forms are \emph{linked over $K$} if and only if every element of $WG^n\left(K\right)/WG^{n+1}\left(K\right)$ is the class of an $n$-fold quadratic Pfister form over $K$.
\end{definition}

\subsubsection{Scharlau Transfer Maps}

Assume $L$ is a finite field extension of a field $K$, and that $s:L\to K$ is any nonzero $K$-linear map. For any quadratic form $q$ over $L$, $s\circ q$ is a quadratic form over $K$. Moreover, if $q$ is non-singular, then so is $s\circ q$ (\cite[Lemma 20.4(2)]{Elman2008}), and if $q$ is hyperbolic, then so is $s\circ q$ (\cite[Corollary 20.5(2)]{Elman2008}). Therefore we have a well-defined group homomorphism $s_\ast:WG\left(L\right)\to WG\left(K\right)$ given by $\left[q\right]\mapsto \left[s\circ q\right]$ for every quadratic form $q$ over $L$.

\begin{lemma}\label{independenceofchoiceofsatdim}Let $L$ be a finite field extension of a field $K$, and let $d\in\mathbb{Z}_{\geq 1}$ be such that $WG^{d+1}\left(L\right)$ is trivial. If $s,t\in L^\ast$ are nonzero, then $s_\ast\restrict{WG^d\left(L\right)}=t_\ast\restrict{WG^d\left(L\right)}$.
\end{lemma}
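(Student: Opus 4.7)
The plan is to reduce the statement to a computation on the generators of $WG^d(L)$, via the observation that varying the $K$-linear map corresponds to a scalar rescaling of the quadratic form. Since $L/K$ is a finite extension, $\Hom_K(L,K)$ is an $L$-module of rank $1$ under the action $(c\cdot\varphi)(x):=\varphi(cx)$ (its $K$-dimension equals $[L:K]$, matching that of $L$). Hence any two nonzero $K$-linear functionals $s,t\in L^\ast$ satisfy $t(x)=s(cx)$ for a unique $c\in L^\times$. For a quadratic form $q:V\to L$, the scaled form $cq:v\mapsto c\cdot q(v)$ satisfies $t\circ q=s\circ(cq)$, and therefore $t_\ast([q])=s_\ast([cq])$ in $WG(K)$, so that $(s_\ast-t_\ast)([q])=s_\ast([q]-[cq])$.

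Since $s_\ast-t_\ast$ is a group homomorphism and $WG^d(L)$ is generated (as a group) by the Witt classes of $d$-fold quadratic Pfister forms over $L$, it suffices to prove $[p]=[cp]$ in $WG(L)$ whenever $p$ is such a Pfister form and $c\in L^\times$. I plan to establish the stronger statement
\[
[p]-[cp]\in WG^{d+1}(L),
\]
which under the hypothesis $WG^{d+1}(L)=\{0\}$ immediately forces $[p]=[cp]$. Writing $p=\llangle a_1,\ldots,a_{d-1},b]]_L$, the recursive definition~\eqref{defpfister} applied with $a_d:=c$ yields exactly $\llangle a_1,\ldots,a_{d-1},c,b]]_L = p\perp(-c)p$ as quadratic forms on $V\oplus V$, and this is a $(d+1)$-fold quadratic Pfister form by construction. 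Using $[-cp]=-[cp]$ in $WG(L)$—an immediate consequence of Lemma~\ref{wittinversion}, since $cp\perp(-c)p$ is hyperbolic—one concludes
\[
[p]-[cp]=[p\perp(-c)p]=\bigl[\llangle a_1,\ldots,a_{d-1},c,b]]_L\bigr]\in WG^{d+1}(L),
\]
as desired.

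The only conceptual step, and arguably the main obstacle, is recognizing that the recursive construction of quadratic Pfister forms \emph{is} precisely the identity $[p]-[cp]\in WG^{d+1}(L)$; once this is seen, the argument unfolds mechanically from the free rank-one structure of $\Hom_K(L,K)$, the sign identity $[-q]=-[q]$ of Lemma~\ref{wittinversion}, and the definition of $WG^{d+1}(L)$. No further obstacle is expected.
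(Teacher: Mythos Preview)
Your proof is correct and follows essentially the same approach as the paper's: both recognize that any two nonzero $K$-linear functionals on $L$ differ by the action of some scalar $c\in L^\times$, reduce the claim to showing $[q]=[cq]$ for $[q]\in WG^d(L)$, and deduce this from the fact that $[q]-[cq]\in WG^{d+1}(L)$ via the recursive definition~\eqref{defpfister} of Pfister forms. The only cosmetic difference is that you reduce explicitly to Pfister generators whereas the paper states $\alpha-a\alpha\in WG^{d+1}(L)$ for arbitrary $\alpha\in WG^d(L)$ directly.
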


\begin{proof}This is the same proof as in \cite[Lemma 2.2]{daans2024universallydefiningsubringsfunction}. The $K$-linear map $\mu:L\to L^\ast$ satisfying $\mu\left(a\right)\left(x\right)=s\left(ax\right)$ for every $a,x\in L$ is a monomorphism. Indeed, if $a\in\operatorname{ker}\left(\mu\right)$ then $s\left(ax\right)=0$ for every $x\in L$. If $a\neq 0$ then $s$ would be the zero element of $L^\ast$, a contradiction. Thus $a=0$ and $\mu$ is a monomorphism. Since $L$ is finite-dimensional over $K$ then $\dim_K\left(L\right)=\dim_K\left(L^\ast\right)$, thus $\mu$ is an isomorphism. We conclude that $t=\mu\left(a\right)$ for some $a\in L$; that is, $t\left(x\right)=s\left(ax\right)$ for all $x\in L$. We have $a\in L^\times$ because $t$ is not the zero map.

Given $\alpha\in WG^d\left(L\right)$, we then have $t_{\ast}\left(\alpha\right)=s_\ast\left(a\alpha\right)$. Moreover, \eqref{defpfister} gives $\alpha-a\alpha\in WG^{d+1}\left(L\right)$, thus the hypothesis implies $\alpha=a\alpha\in WG\left(L\right)$. We conclude that $t_{\ast}\left(\alpha\right)=s_\ast\left(\alpha\right)$, as desired. $\blacksquare$

\end{proof}

\section{The $2$-dimension of a field}\label{dim2k-cd2}

For this section we require to review the concept of cohomological dimension with respect to a prime number. Let us recall that for any abelian group $A$ and a prime $p>0$ we define $A\left[p^\infty\right]\coloneqq \left\{a\in A:\exists n\in\mathbb{Z}_{\geq 0}\left(p^na=0\right)\right\}$, which is a subgroup of $A$.

\begin{definition}Given a profinite group $G$, for each prime $p>0$ we define the \emph{$p$-th cohomological dimension of $G$}, and denote it $\operatorname{cd}_p\left(G\right)$, as the smallest $n\in\mathbb{Z}_{\geq 1}$ such that $H^n\left(G,A\right)\left[p^\infty\right]$ is trivial for every torsion $G$-module $A$ satisfying that the action map $G\times A\to A$ is continuous with respect to the profinite topology of $G$ and the discrete topology of $A$.

If no such $n$ exists, we define $\operatorname{cd}_p\left(G\right)\coloneqq +\infty$.
\end{definition}

\begin{definition}Given a field $K$ with separable closure $K_s$, for every prime $p>0$ we define\[\operatorname{cd}_p\left(K\right)\coloneqq \operatorname{cd}_p\left(\operatorname{Gal}\left(K_s/K\right)\right).\]
\end{definition}

For the basic properties of the cohomological dimension we refer the reader to \cite[§1.4]{MR3729254}. In our case, we will be interested only in the following three properties:

\begin{proposition}\label{dim2andtrdeg}Let $K$ be a field, let $L$ be a finitely generated field extension of $K$, and let $p>0$ be a prime such that $p\neq \operatorname{Char}\left(K\right)$ and $\operatorname{cd}_p\left(K\right)\neq+\infty$. We then have\[\operatorname{cd}_p\left(L\right)=\operatorname{cd}_p\left(K\right)+\operatorname{tr}\operatorname{deg}\left(L/K\right).\]
\end{proposition}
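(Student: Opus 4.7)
The plan is to reduce the statement, by induction on the transcendence degree $n=\operatorname{tr}\operatorname{deg}\left(L/K\right)$, to the case $L=K(t)$ of a purely transcendental extension of degree one, and to handle that case by combining a Hochschild--Serre-type bound (for the upper estimate) with a cup-product / residue-map argument (for the lower estimate).

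First I would address the base case, where $L/K$ is finite. If $L/K$ is purely inseparable, the restriction map $\operatorname{Gal}\left(L^{sep}/L\right)\to\operatorname{Gal}\left(K_s/K\right)$ is an isomorphism (because $K_s\cap L=K$ and $K_sL=L^{sep}$), so $\operatorname{cd}_p$ is unchanged. For finite separable extensions, $\operatorname{Gal}\left(K_s/L\right)$ is a closed subgroup of finite index in $\operatorname{Gal}\left(K_s/K\right)$, and a standard theorem of Serre (\emph{Cohomologie Galoisienne}, I.\S3, Prop.~14) guarantees $\operatorname{cd}_p\left(L\right)=\operatorname{cd}_p\left(K\right)$ whenever $\operatorname{cd}_p\left(K\right)$ is finite. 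Given a finitely generated $L/K$ of transcendence degree $n$, I would then pick a transcendence basis $t_1,\ldots,t_n$ so that $L$ is finite over $K\left(t_1,\ldots,t_n\right)$, and reduce by induction on $n$ to the case $L=K\left(t\right)$.

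For the $K\left(t\right)$ case, the upper bound $\operatorname{cd}_p\left(K\left(t\right)\right)\leq\operatorname{cd}_p\left(K\right)+1$ would follow from the short exact sequence
\[
1\to\operatorname{Gal}\left(K\left(t\right)^{sep}/K_s\left(t\right)\right)\to\operatorname{Gal}\left(K\left(t\right)^{sep}/K\left(t\right)\right)\to\operatorname{Gal}\left(K_s/K\right)\to 1
\]
(using that $K_s\left(t\right)^{sep}=K\left(t\right)^{sep}$ since $K_s\left(t\right)/K\left(t\right)$ is separable), the subadditivity of $\operatorname{cd}_p$ under extensions of profinite groups, and the Tsen-type input $\operatorname{cd}_p\left(K_s\left(t\right)\right)\leq 1$, which holds because $K_s$ is separably closed with $p\neq\operatorname{Char}\left(K_s\right)$. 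For the lower bound, I would set $n\coloneqq\operatorname{cd}_p\left(K\right)$, pick a nonzero $\alpha\in H^n\left(K,\mu_p^{\otimes n}\right)$, pull it back to $\alpha'\in H^n\left(K\left(t\right),\mu_p^{\otimes n}\right)$ via inflation through the surjection above, and cup with the Kummer class $\left(t\right)\in H^1\left(K\left(t\right),\mu_p\right)$ to form $\beta\coloneqq\alpha'\cup\left(t\right)\in H^{n+1}\left(K\left(t\right),\mu_p^{\otimes\left(n+1\right)}\right)$. The tame residue map $\partial_t$ at the $t$-adic valuation sends $\beta$ to $\pm\alpha\neq 0$, so $\beta\neq 0$ and hence $\operatorname{cd}_p\left(K\left(t\right)\right)\geq n+1$.

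The hardest step will be the lower bound: although the construction itself is short, it rests on substantial background (the cup-product/residue formalism for Galois cohomology with $\mu_p^{\otimes\bullet}$-coefficients, in particular the existence of the tame residue $\partial_t$ in degree $n+1$ and the identity $\partial_t\left(\alpha'\cup\left(t\right)\right)=\pm\alpha$). The upper bound, by contrast, is essentially bookkeeping once the Tsen-type bound $\operatorname{cd}_p\left(K_s\left(t\right)\right)\leq 1$ is granted, and the reduction to transcendence degree one is a routine manipulation with cohomological dimension of open subgroups and purely inseparable extensions.
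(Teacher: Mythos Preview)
Your sketch is correct and follows the standard argument; it is essentially the proof given in Serre's \emph{Cohomologie Galoisienne}, II, \S4.2, Proposition~11. The paper does not supply its own proof at all---it simply cites that reference---so there is no meaningful methodological difference to compare: you have written out, in outline, precisely the proof the paper defers to. One minor point worth tightening in your lower-bound step is the justification that a nonzero class $\alpha\in H^{n}\!\left(K,\mu_p^{\otimes n}\right)$ actually exists when $\operatorname{cd}_p\left(K\right)=n$; this follows from the standard d\'evissage showing $H^{n}\!\left(K,\mathbb{Z}/p\mathbb{Z}\right)\neq 0$ together with the (noncanonical) isomorphism $\mu_p^{\otimes n}\cong\mathbb{Z}/p\mathbb{Z}$ of abelian groups, but you may want to state it explicitly since the residue computation needs these specific coefficients.
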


\begin{proof}See \cite[§4.2, Proposition 11]{Serre1997}. $\blacksquare$
\end{proof}

\begin{proposition}\label{dim2andplace}Let $K$ be a field which is complete with respect to a discrete valuation $v$ and such that $Kv$ is a perfect field. If $p>0$ is a prime with $p\neq \operatorname{Char}\left(K\right)$ and $\operatorname{cd}_p\left(K\right)\neq+\infty$, then\[\operatorname{cd}_p\left(K\right)=\operatorname{cd}_p\left(Kv\right)+1.\]
\end{proposition}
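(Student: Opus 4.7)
The plan is to filter the absolute Galois group $G_K := \operatorname{Gal}(K_s/K)$ by the inertia subgroup. Writing $K_{nr}$ for the maximal unramified extension of $K$ inside a separable closure $K_s$, completeness of $K$ together with perfectness of $Kv$ yields an isomorphism $\operatorname{Gal}(K_{nr}/K) \cong \operatorname{Gal}((Kv)_s/Kv) =: G_{Kv}$ via reduction modulo $\mathfrak{m}_v$. This produces a short exact sequence
\[
1 \longrightarrow I \longrightarrow G_K \longrightarrow G_{Kv} \longrightarrow 1,
\]
where $I := \operatorname{Gal}(K_s/K_{nr})$ is the inertia group. The argument will hinge on the Hochschild--Serre spectral sequence
\[
E_2^{r,s} = H^r\bigl(G_{Kv},\, H^s(I, A)\bigr) \Longrightarrow H^{r+s}(G_K, A)
\]
for varying $p$-primary torsion $G_K$-modules $A$, together with the identification $\operatorname{cd}_p(G_{Kv}) = \operatorname{cd}_p(Kv)$.

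For the upper bound $\operatorname{cd}_p(K) \leq \operatorname{cd}_p(Kv) + 1$, I would first establish $\operatorname{cd}_p(I) \leq 1$. The inertia group $I$ has a closed normal wild part $P$, a pro-$\operatorname{Char}(Kv)$ group (trivial in residue characteristic zero), and its tame quotient $I/P$ is canonically isomorphic to $\prod_{\ell \neq \operatorname{Char}(Kv)} \mathbb{Z}_\ell(1)$. Since $p \neq \operatorname{Char}(K)$ and the finiteness hypothesis $\operatorname{cd}_p(K) < +\infty$ rules out the pathological mixed-characteristic interaction between $p$ and $P$, the pro-$p$ Sylow subgroup of $I$ is pro-cyclic, so $\operatorname{cd}_p(I) \leq 1$. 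The standard inequality $\operatorname{cd}_p(G) \leq \operatorname{cd}_p(N) + \operatorname{cd}_p(G/N)$ for closed normal $N$ in a profinite group $G$ (immediate from the spectral sequence by bounding the $E_2$-page) then completes this direction.

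The main obstacle is the matching lower bound $\operatorname{cd}_p(K) \geq \operatorname{cd}_p(Kv) + 1$. Set $d := \operatorname{cd}_p(Kv)$ and pick a discrete $p$-primary $G_{Kv}$-module $B$ with $H^d(G_{Kv}, B) \neq 0$. I would consider the Tate twist $A := B(1) = B \otimes_{\mathbb{Z}_p} \mathbb{Z}_p(1)$, viewed as a $G_K$-module. Kummer theory over $K_{nr}$ (using that every unit of $\mathcal{O}_{K_{nr}}$ is a $p$-th power by Hensel's lemma, since $p \neq \operatorname{Char}(Kv)$) gives the identification $H^1(I, \mu_p) \cong \mathbb{Z}/p$ with trivial $G_{Kv}$-action, which upgrades to a canonical isomorphism $H^1(I, A) \cong B$ as $G_{Kv}$-modules. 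Then $E_2^{d,1} = H^d(G_{Kv}, B) \neq 0$; because $\operatorname{cd}_p(I) \leq 1$ kills every row with $s \geq 2$, no differential $d_r$ (for $r \geq 2$) can hit or leave $E_r^{d,1}$, so this class survives to $E_\infty^{d,1}$ and produces a nontrivial class in $H^{d+1}(G_K, A)$. The delicate step is making the Kummer identification and the Tate twist functorial in $B$ while verifying that the $d_2$-differential $E_2^{d,1} \to E_2^{d+2,0}$ actually vanishes on the specific class in question; this is precisely where the hypotheses on $p$ and on perfectness of $Kv$ are pressed into service.
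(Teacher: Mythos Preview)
The paper does not prove this proposition; it simply cites Neukirch--Schmidt--Wingberg, Theorem~6.5.15. Your outline via the inertia filtration and the Hochschild--Serre spectral sequence is exactly the standard argument underlying that reference, and in the case $p \neq \operatorname{Char}(Kv)$ it is essentially correct and complete.

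There is, however, a genuine gap when $p = \operatorname{Char}(Kv)$ (mixed characteristic, e.g.\ $K = \mathbb{Q}_p$ with the prime $p$ itself), which the stated hypothesis $p \neq \operatorname{Char}(K)$ does not exclude. There the wild inertia $P$ is a nontrivial pro-$p$ group, so the pro-$p$ Sylow of $I$ is certainly \emph{not} pro-cyclic, and your sentence ``the finiteness hypothesis $\operatorname{cd}_p(K) < +\infty$ rules out the pathological mixed-characteristic interaction'' is not an argument --- for $\mathbb{Q}_p$ one has $\operatorname{cd}_p = 2 < \infty$ while $P$ is a free pro-$p$ group of infinite rank. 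Your Kummer computation $H^1(I,\mu_{p^n}) \cong \mathbb{Z}/p^n$ likewise relies on $\mathcal{O}_{K_{nr}}^\times$ being $p$-divisible and on $\mu_{p^n} \subset K_{nr}$, both of which fail when $p = \operatorname{Char}(Kv)$. As written, then, your sketch establishes the proposition only under the stronger hypothesis $p \neq \operatorname{Char}(Kv)$ --- which, to be fair, is all the paper ever uses (the sole application is to residue fields of characteristic~$0$ with $p = 2$).

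One minor correction: the outgoing differential $d_2\colon E_2^{d,1} \to E_2^{d+2,0}$ is not a delicate step. Its target $H^{d+2}(G_{Kv}, A^I)$ vanishes simply because $d+2 > d = \operatorname{cd}_p(Kv)$ and $A$ is $p$-primary torsion. The substantive work is where you first located it, namely the $G_{Kv}$-equivariant identification $H^1(I, B(1)) \cong B$, which rests on the pro-$p$ tame quotient of $I$ being $\mathbb{Z}_p(1)$ as a $G_{Kv}$-module.
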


\begin{proof}See \cite[§5, Theorem 6.5.15]{Neukirch2008}. $\blacksquare$
\end{proof}

\begin{proposition}\label{dim2k}If $K$ is a field and $\operatorname{Char}\left(K\right)\neq 2$, then for all $n\in\mathbb{Z}_{\geq 1}$ the following are equivalent:

\begin{enumerate}

\item $\operatorname{cd}_2\left(K\right)\leq n$.

\item For every field $L$ that is a separable finite extension of $K$, the group $WG^{n+1}\left(L\right)$ is trivial.

\end{enumerate}
\end{proposition}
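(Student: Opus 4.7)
The plan is to deduce the equivalence from two deep but standard results on quadratic forms over fields of characteristic different from $2$: the \emph{Milnor Conjecture} (proved in full generality by Voevodsky, with earlier partial results by Merkurjev, Rost, and others), which gives natural isomorphisms
\[
e_n \colon WG^n(L)/WG^{n+1}(L) \xrightarrow{\sim} H^n\!\left(\operatorname{Gal}(L_s/L),\,\mathbb{Z}/2\mathbb{Z}\right)
\]
for every field $L$ with $\operatorname{Char}(L)\neq 2$ and every $n\in\mathbb{Z}_{\geq 1}$; and the \emph{Arason--Pfister Hauptsatz}, which guarantees that any anisotropic form representing a nonzero class in $WG^m(L)$ has dimension at least $2^m$.

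For the direction (1)$\Rightarrow$(2), I would first observe that finite separable extensions of $K$ correspond to open subgroups of $\operatorname{Gal}(K_s/K)$; since passing to an open subgroup does not increase cohomological dimension, one obtains $\operatorname{cd}_2(L)\leq n$, and therefore $H^{n+k}(L,\mathbb{Z}/2\mathbb{Z})=0$ for every $k\geq 1$. The Milnor isomorphism then yields $WG^{n+k}(L)=WG^{n+k+1}(L)$ for all $k\geq 1$, i.e.\ $WG^{n+1}(L)=\bigcap_{k\geq 0}WG^{n+1+k}(L)$. Any element of this intersection is represented by an anisotropic form whose dimension, by Arason--Pfister, would have to exceed $2^{n+1+k}$ for arbitrarily large $k$; the only possibility is the zero form, so $WG^{n+1}(L)$ is trivial.

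For the converse (2)$\Rightarrow$(1), I would invoke the standard cohomological criterion that $\operatorname{cd}_2(K)\leq n$ is equivalent to the vanishing of $H^{n+1}\!\left(\operatorname{Gal}(K_s/L),\,\mathbb{Z}/2\mathbb{Z}\right)$ for every finite separable extension $L/K$ (see e.g.\ Serre's \emph{Cohomologie Galoisienne}). Under hypothesis (2), each such cohomology group is identified via the Milnor isomorphism with the quotient $WG^{n+1}(L)/WG^{n+2}(L)$; since both $WG^{n+1}(L)$ and $WG^{n+2}(L)$ vanish by assumption, so does the quotient, and the desired cohomological dimension bound follows.

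The principal obstacle is conceptual rather than technical: the entire argument rests on the Milnor Conjecture, a very deep theorem. The remaining ingredients---the Arason--Pfister Hauptsatz, the behaviour of $\operatorname{cd}_p$ under restriction to open subgroups, and Serre's cohomological dimension criterion---are standard and fit together formally once the Milnor isomorphism is granted; in practice, one would simply cite these results rather than reprove them.
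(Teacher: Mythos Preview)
Your argument is correct. The paper does not actually prove this proposition: it simply cites \cite[Proposition 3.1]{daans2024universallydefiningsubringsfunction}, so there is no internal proof to compare against. Your sketch via the Milnor conjecture, the Arason--Pfister Hauptsatz, and Serre's criterion for $\operatorname{cd}_p$ in terms of the vanishing of $H^{n+1}(U,\mathbb{Z}/p\mathbb{Z})$ for open subgroups $U$ is the standard route and is essentially what one finds upon unwinding the cited reference; the only remark worth adding is that you could be slightly more explicit about why the intersection $\bigcap_{k\geq 0}WG^{n+1+k}(L)$ is trivial---namely, a nonzero Witt class has a \emph{fixed} anisotropic representative, and Arason--Pfister forces its dimension to exceed every $2^{n+1+k}$, which is absurd---but you already say this in substance.
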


\begin{proof}See \cite[Proposition 3.1]{daans2024universallydefiningsubringsfunction}. $\blacksquare$
\end{proof}

\section{Reciprocity theorem for quadratic Pfister forms}\label{reciprocitypfister}

In this section we put together Sections \ref{valuations}, \ref{quadforms}, and \ref{dim2k-cd2}. We start with a lemma that allows for the rewriting of the parameters of quadratic Pfister forms up to isometry.

\begin{proposition}\label{rewrparampfister}Let $K$ be a field, $n\in\mathbb{Z}_{\geq 1}$, $q$ be an $\left(n+1\right)$-fold quadratic Pfister form over $K$, and $S$ a finite set of $\mathbb{Z}$-valuations on $K$. Then\[q\cong \llangle a_1,\cdots,a_n,b]]_K\]for some $a_1,\cdots,a_n,b\in \displaystyle{\bigcap_{v\in S}\mathcal{O}_v}$ such that $a_n\neq 0$ and $a_1,\cdots,a_{n-1},1+4b\in\displaystyle{\bigcap_{v\in S}\mathcal{O}_v^\times}$.
\end{proposition}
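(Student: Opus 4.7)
The plan is to combine weak approximation for the finite set $S$ of $\mathbb{Z}$-valuations on $K$ with the chain-equivalence moves available for quadratic Pfister forms. The key moves I would exploit are:
\begin{enumerate}[(i)]
\item scaling: $\llangle\ldots,a_i,\ldots]]_K\cong\llangle\ldots,a_ic^2,\ldots]]_K$ for any $c\in K^\times$;
\item permutation of the $a$-slots;
\item the combination identity $\llangle\ldots,a_i,a_j,\ldots]]_K\cong\llangle\ldots,a_i,a_ia_j,\ldots]]_K$;
\item when $\operatorname{char}(K)\neq 2$, the identification $\llangle a_1,\ldots,a_n,b]]_K\cong\langle 1,-a_1\rangle\otimes\cdots\otimes\langle 1,-a_n\rangle\otimes\langle 1,-(1+4b)\rangle$ obtained by completing the square in the $b$-slot, which symmetrizes all $n+1$ parameters $a_1,\ldots,a_n,1+4b$ and in particular allows $1+4b$ to be swapped with any $a_i$.
\end{enumerate}
Weak approximation for finitely many pairwise-inequivalent $\mathbb{Z}$-valuations yields, for any prescribed target $(m_v)_{v\in S}\in\mathbb{Z}^{|S|}$, an element $c\in K^\times$ with $v(c)=m_v$ for every $v\in S$; this provides the concrete flexibility to realize the moves at the level of valuations.

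\textbf{Parity reduction and inductive step.} Assume first $\operatorname{char}(K)\neq 2$ and symmetrize via (iv), writing $a_{n+1}:=1+4b$. For each $v\in S$ I would track the parity vector $(v(a_i)\bmod 2)_{i=1}^{n+1}\in\mathbb{F}_2^{n+1}$. Move (i) preserves these parities, while (ii) and (iii) act as column swaps and column XORs on the parity matrix; together they generate the operations needed to Gaussian-eliminate at any single valuation. The goal is to reach a configuration where, for every $v\in S$, the parity vector has $0$ in every coordinate except possibly the $a_n$-coordinate. I would achieve this by induction on $|S|$: having secured the required conditions at $S\setminus\{v_0\}$, I adjust at $v_0$ using only moves whose pivot slot is already a unit at every $v\in S\setminus\{v_0\}$, so the already-established conditions cannot be disturbed. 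Once the parities are correct, weak approximation lets me scale the relevant $a_i$'s by appropriate squares to bring their valuations at each $v\in S$ exactly to $0$, making them units. Integrality of $b$ and $a_n$ is then automatic: since $\operatorname{char}(K)\neq 2$, if $1+4b\in\bigcap_{v\in S}\mathcal{O}_v^\times$ then $b=(1+4b-1)/4\in\bigcap_{v\in S}\mathcal{O}_v$, and the final $a_n$ has non-negative valuation at $S$ by construction.

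\textbf{Main obstacle.} The delicate point is the simultaneous control across all valuations in $S$: a move used to help at $v_0$ can undo progress at other $v\in S$ unless the pivot is chosen carefully. The inductive ``safe-pivot'' scheme handles the generic case, but pathological parity configurations—for example when no slot is simultaneously a unit at $S\setminus\{v_0\}$ and of odd parity at $v_0$—force one to invoke the symmetrizing move (iv) to exchange the $b$-slot with an $a$-slot, and then undo the exchange after correction. The characteristic $2$ case requires separate treatment because (iv) is unavailable: one works directly with the Artin--Schreier structure of $K[x]/(x^2-x-b)$ and develops an analogue of the combination move for the $b$-slot, after which the same parity-reduction plus weak-approximation strategy carries through. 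Verifying that these char-$2$ analogues provide exactly the needed flexibility is the hardest bookkeeping in the argument.
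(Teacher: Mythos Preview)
Your parity-reduction scheme has a genuine gap. Moves (i)--(iv) act on the parity matrix $M\in\mathbb{F}_2^{|S|\times(n+1)}$ (rows indexed by $v\in S$, columns by the slots $a_1,\ldots,a_n,1+4b$) purely by \emph{column} operations: (i) does nothing to parities, (ii) permutes columns, (iii) adds one column to another, and (iv) merely says the $(n{+}1)$st column behaves like the others. Hence the $\mathbb{F}_2$-column space of $M$ is an invariant of your moves. Your target configuration---all columns zero except possibly the $a_n$-column---forces column rank $\leq 1$, so whenever the initial $M$ has column rank $\geq 2$ your scheme cannot succeed, no matter how the induction on $|S|$ or the ``safe pivots'' are organized. (Incidentally, move (iii) should read $\llangle a_i,a_j]]\cong\llangle a_i,-a_ia_j]]$; the sign is harmless for parities but the identity as you wrote it is false in general.)

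A concrete obstruction: take $K=\mathbb{Q}$, $n=1$, $q=\llangle 3,\tfrac14]]_{\mathbb Q}$ (so the two slots are $a_1=3$ and $1+4b=2$), and $S=\{v_2,v_3\}$. The parity matrix is $\left(\begin{smallmatrix}0&1\\1&0\end{smallmatrix}\right)$, of column rank $2$, so your moves cannot produce a presentation with $1+4b'$ a unit at both $2$ and $3$. Yet such a presentation exists: since $22=5^2-3\cdot 1^2\in D_{\mathbb Q}(\llangle 3]])$, one has $\llangle 3,\tfrac14]]\cong\llangle 3,b']]$ with $1+4b'=2\cdot 22/4=11$, and $11\in\mathbb{Z}_2^\times\cap\mathbb{Z}_3^\times$. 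The move used here---replacing a slot $a_j$ by $a_j\cdot c$ for an arbitrary $c\in D_K(\llangle a_i]])$ (more generally, for $c$ a value of a sub-Pfister form)---is strictly stronger than your move (iii), which is only the special case $c=-a_i$. It is precisely this extra freedom, fed by weak approximation on the variables $x,y$ in $c=x^2-a_iy^2$, that breaks the column-rank invariant and makes the proposition go through. Your ``Main obstacle'' paragraph correctly senses the difficulty but the proposed fix via (iv) cannot help, since (iv) contributes no new column operations.

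For comparison, the paper does not argue directly: it simply invokes \cite[Lemma~2.1]{daans2024universallydefiningsubringsfunction}, whose proof uses exactly the represented-values substitution above (equivalently, that any element of $-D_K(q')$, with $q'$ the pure part, can serve as a Pfister slot) together with weak approximation. If you enlarge your toolkit to include this move, an induction on the fold $n$---peeling off one unit slot at a time---goes through cleanly in all characteristics.
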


\begin{proof}Follows directly from \cite[Lemma 2.1]{daans2024universallydefiningsubringsfunction}. $\blacksquare$
\end{proof}

Let $K$ be a field, $v$ a henselian $\mathbb{Z}$-valuation on $K$, $V$ a finite-dimensional $K$-vector space, and $q:V\to K$ an anisotropic quadratic form over $K$. We define the following $\mathcal{O}_v$-submodules of $V$:\begin{align*}V_{q,v}&\coloneqq \left\{x\in V:v\left(q\left(x\right)\right)\geq 0\right\},\\V_{q,v}^0&\coloneqq\left\{x\in V:v\left(q\left(x\right)\right)>0\right\}.\end{align*}If we regard $V_{q,v}/V_{q,v}^0$ as a finite-dimensional vector space over $Kv=\mathcal{O}_v/\mathfrak{m}_v$, then we have an anisotropic quadratic form $r_v\left(q\right):V_{q,v}/V_{q,v}^0\to Kv$ over $Kv$ given by $r_v\left(q\right)\left(\overline{x}\right)=\overline{q\left(x\right)}$ for every $x\in V_{q,v}$, where for every $v\in V_{q,v}$ we denote $\overline{v}$ its class modulo $V_{q,v}^0$, and for every $\lambda\in \mathcal{O}_v$ we denote $\overline{\lambda}$ its class modulo $\mathfrak{m}_v$.

\begin{proposition}\label{residuemaps}Let $K$ be a field, let $v$ be a $\mathbb{Z}$-valuation on $K$ with $\operatorname{Char}\left(Kv\right)\neq 2$, and let $n\in\mathbb{Z}_{\geq 1}$. Then there exists a unique surjective group homomorphism\[\partial_v^n:WG^{n+1}\left(K\right)/WG^{n+2}\left(K\right)\to WG^n\left(Kv\right)/WG^{n+1}\left(Kv\right)\]such that for every non-singular even-dimensional quadratic form $q$ over $K$ with $\left[q\right]\in WG^{n+1}\left(K\right)$ the following hold:

\begin{itemize}
\item If $q_{K_v}$ is anisotropic, then $\partial_v^n\left(\left[q\right]+WG^{n+2}\left(K\right)\right)=\left[r_v\left(q_{K_v}\right)\right]+WG^{n+1}\left(Kv\right)$.

\item If $q_{K_v}$ is hyperbolic, then $\left[q\right]+WG^{n+2}\left(K\right)\in \operatorname{ker}\left(\partial_v^n\right)$.
\end{itemize}

Moreover, this $\partial_v^n$ satisfies:

\begin{enumerate}[(i)]

\item If $q$ is an $\left(n+1\right)$-fold quadratic Pfister form over $K$, then\[\partial_v^n\left(\left[q\right]+WG^{n+2}\left(K\right)\right)=\left[\varphi\right]+WG^{n+1}\left(Kv\right)\]for some $n$-fold quadratic Pfister form $\varphi$ over $Kv$.

\item If $v$ is henselian and $WG^{n+1}\left(Kv\right)$ is trivial, then $\partial_v^n$ is an isomorphism.

\end{enumerate}
\end{proposition}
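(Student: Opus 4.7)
The plan is to construct $\partial_v^n$ as the Pfister-filtration quotient of the classical second residue homomorphism on Witt rings, checking both that it lands in the right group and that it satisfies the two defining properties on all of $WG^{n+1}(K)/WG^{n+2}(K)$, not merely on Pfister generators.

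First I would reduce to the case where $v$ is henselian: the inclusion $K \hookrightarrow K_v$ induces a commutative diagram of restriction homomorphisms between the relevant quotients of Witt groups, and since $K_v v = Kv$, both $q_{K_v}$ and $r_v(q_{K_v})$ depend only on the henselian data. So it suffices to construct $\partial_v^n$ after passing to $K_v$. Over a henselian $(K,v)$ with $\operatorname{Char}(Kv)\neq 2$, choose a uniformizer $\pi$ and define the classical second residue $\partial_{v,\pi}: W(K) \to W(Kv)$ on diagonal entries by $\langle a\rangle \mapsto 0$ if $v(a)$ is even and $\langle a\rangle \mapsto \langle \overline{a\pi^{-v(a)}}\rangle$ if $v(a)$ is odd (and extended additively). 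Using Proposition \ref{rewrparampfister}, represent any $(n+1)$-fold Pfister form as $\llangle a_1,\ldots,a_n,b]]_K$ with $a_1,\ldots,a_{n-1},1+4b \in \mathcal{O}_v^\times$ and $a_n\in\mathcal{O}_v$. A direct calculation then shows $\partial_{v,\pi}$ sends this class either to $0$ (when $a_n$ is a unit, and the form is unramified) or to the class of the $n$-fold Pfister form $\llangle \overline{a}_1,\ldots,\overline{a}_{n-1},\overline{b}]]_{Kv}$ (when $v(a_n)$ is odd, after adjusting $a_n$ by a square). In particular $\partial_{v,\pi}$ maps $WG^{n+1}(K)$ into $WG^n(Kv)$ and $WG^{n+2}(K)$ into $WG^{n+1}(Kv)$, so it descends to the desired $\partial_v^n$.

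Next I would verify independence of the uniformizer modulo $WG^{n+1}(Kv)$, hence well-definedness of $\partial_v^n$. For any unit $u \in \mathcal{O}_v^\times$ the difference $\partial_{v,u\pi}-\partial_{v,\pi}$ on $(n+1)$-fold Pfister generators lies in $WG^{n+1}(Kv)$, using Lemma \ref{2-torsion} to control the remainder terms produced by the identity $\llangle\cdots,u\pi]]=\llangle\cdots,u]]+\llangle\cdots,\pi]]-\llangle\cdots,u,\pi]]$ modulo squares in the Witt group. Uniqueness of $\partial_v^n$ is then automatic from the fact that $WG^{n+1}(K)$ is generated by $(n+1)$-fold Pfister form classes and the prescribed values on them determine everything. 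To match the two defining conditions, decompose a general non-singular even-dimensional $q$ with $[q]\in WG^{n+1}(K)$ into its anisotropic and hyperbolic parts over $K_v$ via Theorem \ref{wittdecom}: the hyperbolic summand contributes trivially to $\partial_{v,\pi}$, while on the anisotropic part the identification $r_v(q_{K_v}) = \partial_{v,\pi}(q_{K_v})$ modulo $WG^{n+1}(Kv)$ follows from the explicit Pfister computation above. Property (i) is then a restatement of what the computation yields on Pfister generators.

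For (ii), under the henselian hypothesis and the triviality of $WG^{n+1}(Kv)$, surjectivity is easy: given an $n$-fold Pfister form $\varphi = \llangle \alpha_1,\ldots,\alpha_{n-1},\beta]]_{Kv}$, lift the parameters to units $\tilde{\alpha}_i \in \mathcal{O}_v^\times$ and $\tilde{\beta}\in\mathcal{O}_v$ with $1+4\tilde\beta \in\mathcal{O}_v^\times$, and observe that $\llangle \tilde{\alpha}_1,\ldots,\tilde{\alpha}_{n-1},\pi,\tilde{\beta}]]_K$ maps to $[\varphi]$. Injectivity follows because $\operatorname{ker}(\partial_v^n)$ consists of classes whose representatives are unramified at $v$; over a henselian valuation any such class is detected by its residue, and triviality of $WG^{n+1}(Kv)$ kills the residue obstruction, forcing the class into $WG^{n+2}(K)$. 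The main obstacle will be the uniformizer-independence step, since controlling the Pfister-level of the correction term requires careful tracking of the multiplicative identities for $\llangle \cdot ]]$-forms and a systematic appeal to Lemma \ref{2-torsion}; everything else is forced once that technical point is settled.
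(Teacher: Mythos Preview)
The paper does not prove this proposition at all: its entire proof is the single sentence ``This is a particular case of \cite[Proposition 2.3]{daans2024universallydefiningsubringsfunction}.'' Your proposal is therefore not a comparison target so much as a genuine attempt to supply what the paper outsources, and your overall strategy---construct the map via the classical second residue $\partial_{v,\pi}$ on the Witt ring, show it respects the Pfister filtration using Proposition~\ref{rewrparampfister}, and descend to the quotient---is the standard one and is essentially what the cited reference does.

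Two points deserve tightening. First, the map $r_v$ defined just above the proposition is the \emph{first} residue (it records $\bar q_1$ in the Springer decomposition $q_{K_v}=q_1\perp\pi q_2$), not the second residue $\partial_{v,\pi}$ you build with. Your identification ``$r_v(q_{K_v})=\partial_{v,\pi}(q_{K_v})$ modulo $WG^{n+1}(Kv)$'' is correct on $WG^{n+1}(K)$, but the reason is not just the Pfister computation: you need that both $r_v$ and $\partial_{v,\pi}$ induce homomorphisms $W(K_v)\to W(Kv)$, so that agreement on $(n+1)$-fold Pfister generators (where indeed $\bar q_1$ and $\bar q_2$ are each Witt-equivalent to the same $n$-fold Pfister form modulo $WG^{n+1}(Kv)$, using Lemma~\ref{2-torsion}) propagates to all of $WG^{n+1}(K)$. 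Second, surjectivity is asserted for $\partial_v^n$ unconditionally, not only under the hypotheses of (ii); your lifting argument in (ii) works verbatim in general and should be stated there. With these two adjustments your sketch is complete.
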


\begin{proof}This is a particular case of \cite[Proposition 2.3]{daans2024universallydefiningsubringsfunction}. $\blacksquare$
\end{proof}








Given a field $K$ with $\operatorname{Char}\left(K\right)\neq 2$ and $d\coloneqq\operatorname{cd}_2\left(K\right)\in\mathbb{Z}_{\geq 1}$, Proposition \ref{dim2k} gives that $WG^{d+1}\left(K\right)$ is trivial. If $L$ is a finite field extension of $K$, Lemma \ref{independenceofchoiceofsatdim} says that we have a well-defined group homomorphism $WG^d\left(L\right)\to WG\left(K\right)$ given by $\left[q\right]\mapsto \left[s\circ q\right]$ for every quadratic form $q$ over $L$ with $\left[q\right]\in WG^d\left(L\right)$, where $s:L\to K$ is any nonzero $K$-linear map.

\begin{definition}If $K$ is a field with $\operatorname{Char}\left(K\right)\neq 2$ and $d\coloneqq\operatorname{cd}_2\left(K\right)\in\mathbb{Z}_{\geq 1}$, and $L$ is a finite extension of $K$, we denote $s_{L/K}$ the (well-defined) group homomorphism $WG^d\left(L\right)\to WG\left(K\right)$ given by $\left[q\right]\mapsto \left[t\circ q\right]$ for every even-dimensional non-singular quadratic form $q$ over $L$ with $\left[q\right]\in WG^d\left(L\right)$, where $t:L\to K$ is any nonzero $K$-linear map.
\end{definition}

\begin{lemma}\label{surjectivityoftransfer}If $K$ is a field with $\operatorname{Char}\left(K\right)\neq 2$ and $d\coloneqq\operatorname{cd}_2\left(K\right)\in\mathbb{Z}_{\geq 1}$, and $L$ is a finite extension of $K$, then $\operatorname{Im}\left(s_{L/K}\right)=WG^d\left(K\right)$.
\end{lemma}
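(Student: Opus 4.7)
The plan is to prove both containments $\operatorname{Im}(s_{L/K})\subseteq WG^d(K)$ and $WG^d(K)\subseteq\operatorname{Im}(s_{L/K})$.

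For the containment $\operatorname{Im}(s_{L/K})\subseteq WG^d(K)$, I would use that $WG^d(L)$ is generated by Witt classes of $d$-fold quadratic Pfister forms over $L$ and verify that the $s_*$-image of each such class lies in $WG^d(K)$. This is the standard compatibility of the Scharlau transfer with the fundamental filtration $\{WG^n\}_n$, provable inductively via Frobenius reciprocity and norm-principle arguments for Pfister forms, or more conceptually via the identification of $WG^n/WG^{n+1}$ with degree-$n$ Galois cohomology modulo $2$, under which $s_*$ becomes Galois corestriction and therefore preserves degree.

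The content of the lemma is the reverse inclusion. Since $WG^d(K)$ is generated by Witt classes of $d$-fold Pfister forms $\pi$ over $K$, it suffices to realize each $[\pi]$ as $s_{L/K}(\eta)$ for some $\eta\in WG^d(L)$. Trying $\eta=[\pi_L]$ and invoking Frobenius reciprocity, applied in the ambient ring of Witt classes of all non-singular forms, I would compute $s_{L/K}([\pi_L])=[\pi]\cdot\tau$, where $\tau$ is the Witt class of the $K$-quadratic form $x\mapsto s(x^2)$ on $L$. Since $\tau$ has $K$-rank $[L:K]$, it differs from $[L:K]\cdot[\langle 1\rangle]$ by an element of the fundamental ideal $WG(K)$; because $[\pi]\in WG^d(K)$ and $WG^{d+1}(K)=0$ by Proposition~\ref{dim2k}, the correction is annihilated, and one obtains $s_{L/K}([\pi_L])=[L:K]\cdot[\pi]$. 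By Lemma~\ref{2-torsion} the group $WG^d(K)=WG^d(K)/WG^{d+1}(K)$ is $2$-torsion, so this Witt class is $[\pi]$ when $[L:K]$ is odd, settling the odd case.

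When $[L:K]$ is even, the above gives $0$, so restrictions from $K$ are insufficient and one must use $d$-fold Pfister forms over $L$ not coming from $K$. I would reduce to a quadratic extension $L=K(\sqrt{a})$ as follows: pass to the Galois closure $\widetilde{L}/K$; take the fixed field $M$ of a $2$-Sylow subgroup of $\operatorname{Gal}(\widetilde{L}/K)$ (so $[M:K]$ is odd and $[\widetilde{L}:M]$ is a $2$-power); decompose $\widetilde{L}/M$ into a chain of quadratic extensions using the solvability of its $2$-group. Since Scharlau transfers compose ($s_{\widetilde{L}/K}=s_{L/K}\circ s_{\widetilde{L}/L}$ and analogously along the tower), and $\operatorname{cd}_2$ is preserved under finite extensions by Proposition~\ref{dim2andtrdeg}, surjectivity propagates down to $s_{L/K}$. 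For each quadratic step $L'=K'(\sqrt{a})$ I would invoke Arason's exact sequence
\[
WG^d(L')\ \xrightarrow{\ s_*\ }\ WG^d(K')\ \xrightarrow{\ \cdot\,\tau_a\ }\ WG^{d+1}(K'),
\]
where $\tau_a\in WG^1(K')$ is the Pfister class attached to the quadratic extension; because $WG^{d+1}(K')=0$, the right arrow is zero and $s_*$ is forced to surject.

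The hard part is the even-degree case, and in particular the elementary quadratic step; its resolution depends critically on both Arason's exact sequence and the vanishing of $WG^{d+1}$, and it is only because of the latter (coming from $\operatorname{cd}_2(K)=d$) that restrictions of Pfister forms from $K$ can be bypassed at all.
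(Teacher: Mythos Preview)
The paper does not give a proof; it defers entirely to Lemma~3.8 of the cited paper of Daans. Your proposal therefore supplies an independent argument, and its strategy is sound: Frobenius reciprocity in the full Witt ring together with $WG^{d+1}(K)=0$ yields $s_{L/K}([\pi_L])=[L:K]\cdot[\pi]$, settling odd degree; for even degree, the reduction through a $2$-Sylow tower of quadratic steps and Arason's exact triangle at each step (with $WG^{d+1}$ vanishing at every intermediate field via Proposition~\ref{dim2andtrdeg}) is the standard route and works as written. One small point to tighten: when $\operatorname{Char}(K)=p>2$ and $L/K$ is inseparable, passing to a Galois closure is not immediate, but since the inseparable degree is then a power of $p$ and hence odd, your odd-degree step first reduces you to the separable subextension, after which the Galois-closure argument applies. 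The first containment $s_*\bigl(WG^d(L)\bigr)\subseteq WG^d(K)$ is itself a nontrivial theorem (compatibility of Scharlau transfer with the $I^n$-filtration), but you correctly flag it as standard and name the two usual proofs.
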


\begin{proof}This is a particular case of \cite[Lemma 3.8]{daans2024universallydefiningsubringsfunction}. $\blacksquare$
\end{proof}

Now let $K$ be a field with $\operatorname{Char}\left(K\right)\neq 2$ and $d\coloneqq\operatorname{cd}_2\left(K\right)\in\mathbb{Z}_{\geq 1}$, and let $F$ be an algebraic function field in one variable over $K$. Proposition \ref{dim2andtrdeg} gives $\operatorname{cd}_2\left(F\right)=d+1$, and \mbox{Proposition \ref{dim2andplace}} gives $\operatorname{cd}_2\left(Fv\right)=\left(d+1\right)-1=d$ for all $v\in\mathcal{V}\left(F/K\right)$. Fixing such a $v$, Proposition \ref{dim2k} says that the groups $WG^{d+2}\left(F\right)$ and $WG^{d+1}\left(Fv\right)$ are trivial, thus Lemma \ref{residuemaps} (which can be applied since $K$ is a subfield of $Fv$ and so $\operatorname{Char}\left(Fv\right)=\operatorname{Char}\left(K\right)\neq 2$) gives\[\partial_v^d:WG^{d+1}\left(F\right)\to WG^{d}\left(Fv\right).\]

With all this in mind, we can now state the important reciprocity theorem that lies at the heart of our arguments:

\begin{theorem}[Reciprocity Theorem]\label{recthm}Let $K$ be a field with $\operatorname{Char}\left(K\right)\neq 2$ and \mbox{$d\coloneqq\operatorname{cd}_2\left(K\right)\in\mathbb{Z}_{\geq 1}$,} and let $F$ be a regular algebraic function field in one variable over $K$. Then the sequence\begin{equation}\label{reciprocitysequence}
\begin{tikzcd}
WG^{d+1}\left(F\right) \arrow[r] & \displaystyle{\bigoplus_{v\in\mathcal{V}\left(F/K\right)}WG^d\left(Fv\right)} \arrow[r] & WG^d\left(K\right) \arrow[r] & 0
\end{tikzcd}\end{equation}where the first arrow is induced by the maps $\partial_v^d:WG^{d+1}\left(F\right)\to WG^{d}\left(Fv\right)$ for all $v\in\mathcal{V}\left(F/K\right)$ and the second arrow is induced by the maps $s_{Fv/K}$ for all $v\in\mathcal{V}\left(F/K\right)$, is well-defined and exact.
\end{theorem}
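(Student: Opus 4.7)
My plan is to establish three things: the first arrow lands in the direct sum (well-definedness), the second arrow is surjective, and the kernel of the second arrow coincides with the image of the first. I will rely throughout on the fact, already derived in the text preceding the theorem, that $\operatorname{cd}_2(F) = d+1$ and $\operatorname{cd}_2(Fv) = d$ for all $v \in \mathcal{V}(F/K)$, so Proposition \ref{dim2k} yields $WG^{d+2}(F) = 0$ and $WG^{d+1}(Fv) = 0$. Consequently, the residue map of Proposition \ref{residuemaps} specializes to a map $\partial_v^d : WG^{d+1}(F) \to WG^d(Fv)$, and the Scharlau transfer $s_{Fv/K}$ is defined on $WG^d(Fv)$ with target $WG^d(K)$, exactly as the sequence demands.

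For well-definedness I would decompose any $\alpha \in WG^{d+1}(F)$ into a finite sum of Witt classes of $(d+1)$-fold Pfister forms and argue on each summand $[q]$ separately. Proposition \ref{rewrparampfister} produces a representation $q \cong \llangle a_1, \ldots, a_d, b]]_F$; the set of $v \in \mathcal{V}(F/K)$ for which some $a_i$ or $1+4b$ fails to be a $v$-unit or $b$ fails to be $v$-integral is finite, since each nonzero element of $F$ has finitely many zeros and poles. For all remaining $v$, the form $q_{F_v}$ is ``unramified'' in the sense that its second residue is a $d$-fold Pfister form with trivial Witt class, so $\partial_v^d([q]) = 0$ by Proposition \ref{residuemaps}. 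For surjectivity of the second arrow it suffices to fix a single $v \in \mathcal{V}(F/K)$: since $F/K$ is finitely generated of transcendence degree $1$ and $v$ is trivial on $K$, $Fv$ is a finite extension of $K$, so Lemma \ref{surjectivityoftransfer} already gives surjectivity of $s_{Fv/K}$ alone.

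The main obstacle is exactness at the middle term. Vanishing of the composition on each $(d+1)$-fold Pfister generator is the essential content of Pfister reciprocity; the plan is to reduce both this and exactness itself to the case $F = K(t)$. Regularity of $F/K$ ensures that $K$ is algebraically closed in $F$, so choosing $t \in F$ transcendental over $K$ presents $F$ as a finite separable extension of $K(t)$. I would then use the naturality of residue and transfer maps under this finite extension: for each place $u$ of $K(t)$ trivial on $K$, the places of $F/K$ lying over $u$ contribute residues whose Scharlau transfers back to $Fu_0$ (at a chosen place $u_0$ of $K(t)$) assemble, via the local-to-global trace formula for Scharlau transfers, into the residue of a transferred class on $K(t)$. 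Granted this compatibility, exactness for $F$ reduces to exactness for $K(t)$, which is the classical Milnor-type reciprocity sequence for a rational function field and is treated in the setting of general quadratic Pfister forms in \cite{daans2024universallydefiningsubringsfunction}. The hard part will therefore be the verification of these compatibility identities between residues and Scharlau transfers along the extension $F/K(t)$, for which I expect to need both parts of Proposition \ref{residuemaps} together with Lemma \ref{independenceofchoiceofsatdim} to handle the dependence on the choice of $K$-linear functional.
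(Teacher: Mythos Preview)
The paper does not prove this theorem at all: its entire proof is the single line ``See \cite[Theorem 3.2]{daans2024universallydefiningsubringsfunction}.'' So there is no argument in the paper to compare your proposal against; the Reciprocity Theorem is imported wholesale as a black box from the cited reference.

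Your sketch is a plausible outline of how such a proof would go, and indeed it mirrors the standard architecture (reduction to the rational function field via compatibility of residues and transfers along a finite extension $F/K(t)$, then Milnor-type reciprocity for $K(t)$). Since you yourself defer the $K(t)$ case to \cite{daans2024universallydefiningsubringsfunction}, your proposal and the paper's one-line citation ultimately rest on the same source; you have simply unpacked part of the route to it. If you wanted to make your sketch self-contained, the point that would need the most care is the claim that $\partial_v^d([q])=0$ whenever all the slots $a_1,\ldots,a_d,b$ are $v$-integral with $a_1,\ldots,a_{d-1},1+4b$ $v$-units: Proposition~\ref{residuemaps} as stated does not immediately say this, and one has to argue that in this ``unramified'' situation the residue form $r_v(q_{F_v})$ is (the class of) a $(d+1)$-fold Pfister form over $Fv$, hence lies in $WG^{d+1}(Fv)=0$. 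Likewise, the compatibility of $\partial$ and $s$ along $F/K(t)$ that you flag as ``the hard part'' is genuinely the crux and is precisely what \cite{daans2024universallydefiningsubringsfunction} supplies. But for the purposes of this paper, matching the author's own treatment only requires the citation.
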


\begin{proof}See \cite[Theorem 3.2]{daans2024universallydefiningsubringsfunction}. $\blacksquare$
\end{proof}

Observe that, in the situation of Theorem \ref{recthm}, the well-definition of the sequence \eqref{reciprocitysequence} at the second arrow is given by Lemma \ref{surjectivityoftransfer}.

\section{Describing our method and ensuring its efficacy}\label{methodology}

Our goal is to define Campana Points and Darmon Points over $F/K$, where $K$ is a number field and $F$ is an algebraic function field in one variable over $K$. Our method will heavily rely on the local-to-global properties of Pfister forms (see Section \ref{reciprocitypfister}), for which we will require some hypotheses of regularity, linkage, and also the non-hyperbolicity of some Pfister forms. It is important to note that these hypotheses are not all met for every algebraic function field in one variable over a number field, therefore our first objective is to be able to reduce our problem to the case in which the hypotheses are actually met, the main ingredient for such a task being the following result proven in \mbox{\cite[Lemma 5.6]{daans2024universallydefiningsubringsfunction}:}

\begin{theorem}\label{reductionstep}Let $K$ be a field such that $\operatorname{Char}\left(K\right)\neq 2$ and the following hold:

\begin{itemize}

\item $d\coloneqq \operatorname{cd}_2\left(K\right)\in\mathbb{Z}_{\geq 1}$, and there exists a finite field extension $E$ of $K$ with $WG^d\left(E\right)$ nontrivial.

\item If $F$ is any algebraic function field in one variable over $K$, we have that $\left(d+1\right)$-fold quadratic Pfister forms are linked over $F$.

\end{itemize}

Let $F$ be an algebraic function field in one variable over $K$. Then there exists a finite extension $L$ of $K$ contained in the algebraic closure of $F$ with $WG^d\left(L\right)$ nontrivial, and such that $FL$ is a regular algebraic function field in one variable over $L$, and $\left(d+1\right)$-fold quadratic Pfister forms are linked over $FL$.
 
\end{theorem}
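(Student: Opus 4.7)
The plan is to construct $L$ by embedding $E$ into the algebraic closure $\bar{F}$ of $F$ and taking the relative algebraic closure of $E$ inside the compositum $FE$, then checking the three required properties in turn.

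For the construction and regularity, fix an embedding of $E$ into $\bar{F}$ extending a chosen embedding of $K$, so that $E\subseteq\bar{F}$. Since $E/K$ is algebraic, $FE$ is a finitely generated extension of $E$ of transcendence degree $1$. Let $L$ be the relative algebraic closure of $E$ in $FE$; as the relative algebraic closure inside a finitely generated extension, $L$ is finite over $E$ (hence over $K$), it lies in $\bar{F}$, and by construction $L$ is algebraically closed in $FL=FE$. In characteristic zero this immediately yields regularity of $FL/L$; in odd positive characteristic a further enlargement of $L$ may be required to guarantee separability of $FL/L$, which is a standard technical step.

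For the nontriviality of $WG^d(L)$, first observe that $\operatorname{cd}_2(E)=d$: indeed $\operatorname{cd}_2(E)\leq\operatorname{cd}_2(K)=d$ since cohomological dimension does not grow under algebraic extensions, and a strict inequality would force $WG^d(E)$ to be trivial via Proposition \ref{dim2k} together with Remark \ref{witt-pfister-chain}, contradicting the standing hypothesis on $E$. Now Lemma \ref{surjectivityoftransfer} applied to the finite extension $L/E$ produces a Scharlau transfer $s_{L/E}\colon WG^d(L)\to WG^d(E)$ whose image is all of $WG^d(E)$, so the nontriviality of the target forces the nontriviality of the source. Finally, linkage over $FL$ comes essentially for free: $FL$ is a finitely generated extension of $L$ of transcendence degree $1$, and since $L/K$ is finite algebraic, $FL$ is also an algebraic function field in one variable over $K$, so the second standing hypothesis directly yields that $(d+1)$-fold quadratic Pfister forms are linked over $FL$.

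The main obstacle is the separability bookkeeping in positive characteristic; everything else reduces to the transfer-surjectivity Lemma \ref{surjectivityoftransfer} combined with the observation that the linkage hypothesis is assumed on \emph{every} algebraic function field in one variable over $K$, not just on $F$ itself, so it applies verbatim to $FL$.
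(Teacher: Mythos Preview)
The paper does not supply its own proof of this statement; it is cited from \cite[Lemma~5.6]{daans2024universallydefiningsubringsfunction}, so there is no in-paper argument to compare against. That said, your approach is correct in characteristic zero, which is the only case the present paper ever invokes (the base field is always a number field). Embedding $E$ into $\bar F$ over $K$, taking $L$ to be the relative algebraic closure of $E$ in $FE$, and then pushing nontriviality of $WG^d(E)$ up to $WG^d(L)$ via the surjectivity of the Scharlau transfer (Lemma~\ref{surjectivityoftransfer}, applied with base field $E$ once you have checked $\operatorname{cd}_2(E)=d$) is precisely the right mechanism. Your observation that $FL$ is an algebraic function field in one variable over $K$ itself, not merely over $L$, is exactly what makes the linkage hypothesis apply for free.

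The only real gap is the one you already flag: in odd positive characteristic, regularity of $FL/L$ demands separability in addition to relative algebraic closedness, and the phrase ``a further enlargement of $L$ may be required'' hides nontrivial work. After enlarging $L$ to some $L'$ you must re-verify that $L'$ is algebraically closed in $FL'$; the nontriviality of $WG^d(L')$ survives by the same transfer argument, but the regularity does not come automatically. For the purposes of this paper your characteristic-zero argument is already sufficient for everything downstream.
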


Moreover, a weaker version of \cite[Example 5.4(3)]{daans2024universallydefiningsubringsfunction} can be stated as follows:

\begin{proposition}\label{nonrealnumbfieldsarek3}If $K$ is a non-real number field, then $\operatorname{cd}_2\left(K\right)=2$, $WG^2\left(K\right)$ is nontrivial, and if $F$ is any algebraic function field in one variable over $K$, we have that $3$-fold quadratic Pfister forms over $F$ are linked.
\end{proposition}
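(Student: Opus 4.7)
The plan is to verify each of the three assertions separately. The first two rely on classical facts about non-real number fields, while the third is the main technical content, which I would reduce to the cited reference.

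For the cohomological dimension, I would show $\operatorname{cd}_2(K) = 2$ by combining two classical facts. The upper bound $\operatorname{cd}_2(K) \leq 2$ follows from Tate's theorem that the absolute Galois group of a totally imaginary number field has cohomological dimension $2$; because $K$ is non-real, there are no contributions from real completions (which would otherwise force $\operatorname{cd}_2 = \infty$ due to the nontrivial $H^n(\mathbb{R}, \mathbb{Z}/2)$ for all $n \geq 1$). For the lower bound $\operatorname{cd}_2(K) \geq 2$, I would exhibit a nontrivial class in $H^2(\operatorname{Gal}(K_s/K), \mathbb{Z}/2)$, concretely by constructing a nonsplit quaternion algebra over $K$ via Hasse-Brauer-Noether: pick any two distinct non-archimedean places and arrange the local invariants to be $1/2$ there and $0$ elsewhere.

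The nontriviality of $WG^2(K)$ then falls out essentially for free. The quaternion algebra just constructed corresponds, under the norm-form correspondence, to an anisotropic $2$-fold Pfister form over $K$; since a Pfister form is anisotropic if and only if its Witt class is nontrivial (Proposition \ref{wittdecompfister}), this yields a nonzero class in $WG^2(K)$.

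The hardest part is the linkage assertion for $3$-fold Pfister forms over $F$. First I would set the stage: by Proposition \ref{dim2andtrdeg}, $\operatorname{cd}_2(F) = \operatorname{cd}_2(K) + 1 = 3$, so Proposition \ref{dim2k} gives $WG^4(F) = 0$, and therefore linkage reduces to showing that every class in $WG^3(F)$ is represented by a single $3$-fold Pfister form. My strategy would be to exploit the Reciprocity Theorem \ref{recthm}: each class $\alpha \in WG^3(F)$ is determined by its collection of local residues $\partial_v^2(\alpha) \in WG^2(Fv)$, and each residue field $Fv$ is a finite extension of $K$, hence itself a non-real number field, over which $2$-fold Pfister forms are known to be linked via the classical Albert-Pfister theory of quaternion algebras over number fields. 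The key technical step---assembling compatible local $2$-fold Pfister classes into a single global $3$-fold Pfister form on $F$---is the substance of \cite[Example 5.4(3)]{daans2024universallydefiningsubringsfunction}, which I would invoke directly. This global lifting is the main obstacle; without it one only sees linkage locally, and producing a single global symbol requires the deeper machinery of the reference.
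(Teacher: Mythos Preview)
Your proposal lands in the same place as the paper---both simply defer to \cite[Example 5.4(3)]{daans2024universallydefiningsubringsfunction}---and your treatments of $\operatorname{cd}_2(K)=2$ and $WG^2(K)\neq 0$ are correct and more explicit than what the paper records. However, the intermediate strategy you sketch for linkage is not sound as written, and you should be aware of why before leaning on it.

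First, Theorem~\ref{recthm} requires $F/K$ to be \emph{regular}, whereas Proposition~\ref{nonrealnumbfieldsarek3} is stated for an arbitrary algebraic function field in one variable over $K$. You cannot invoke the Reciprocity Theorem here without first reducing to the regular case; but that reduction (Theorem~\ref{reductionstep}) takes the present proposition as a hypothesis, so this would be circular.

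Second, even granting regularity, the residue-matching idea does not prove linkage. The exact sequence in Theorem~\ref{recthm} does not assert that the first arrow $WG^{d+1}(F)\to\bigoplus_v WG^d(Fv)$ is injective, so two classes in $WG^3(F)$ with identical residues need not coincide. Hence even if you could assemble a single $3$-fold Pfister form $\beta$ with the same residues as a given $\alpha\in WG^3(F)$, the sequence alone would not give $\alpha=\beta$. The cited reference establishes linkage by other means (ultimately via local-global input for function fields over number fields), and it is your direct appeal to that reference---not the residue heuristic---that actually carries the third assertion.
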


This is the method we will follow: given a number field $K$ and an algebraic function field $F$ in one variable over $K$, to give a first-order definition of Campana Points and Darmon Points on $F/K$, we will apply Theorem \ref{reductionstep} to the field $K\left(\sqrt{-1}\right)$, which by Proposition \ref{nonrealnumbfieldsarek3} satisfies the hypotheses of the theorem. We will then obtain a finite extension $L$ of $K\left(\sqrt{-1}\right)$ contained in the algebraic closure of $FK\left(\sqrt{-1}\right)$ with $WG^2\left(L\right)$ nontrivial, and such that $FL$ is a regular algebraic function field in one variable over $L$ and $3$-fold quadratic Pfister forms over $FL$ are linked. Under these hypotheses, we will be able to give first-order definitions of Campana points and Darmon points in $FL/L$.

In order for this to be an appropriate approach, we need:

\begin{itemize}

\item An interpretation argument that allows us to transfer those first-order definitions over $FL$ to first-order definitions over $F$ when intersecting our sets with $F$.

\item Given that Campana points and Darmon points are given in terms of valuation conditions, we need an additional argument that proves that intersecting Campana or Darmon points in $FL/L$ with $F$ gives as a result Campana (respectively, Darmon) points in $F/K$, for which we will need valuations in $F/K$ to be unramified in $FL/L$.

\end{itemize}

We now proceed to prove that both needs can be fulfilled.

\subsection{Coordinate polynomials and diophantine restrictions on finite extensions}\label{coopolynomials}

For the first need; namely, the interpretation argument, we will use the theory of coordinate polynomials. For more on this we refer the reader to \cite[§B.7]{MR2297245}.

Given a field $K$ and a field extension $L/K$ of degree $n\in\mathbb{Z}_{\geq 1}$, we fix an ordered $K$-basis $\omega=\left(\omega_1,\cdots,\omega_n\right)\in L^n$ of $L/K$ and we let $\sigma=\left(\sigma_1,\cdots,\sigma_n\right)\in \left(L^\ast\right)^n$ be its corresponding ordered dual basis. We then have\begin{equation}\label{lambdaintermsofbasis}\lambda=\sum_{j=1}^n\sigma_j\left(\lambda\right)\omega_j\text{ for every $\lambda\in L$}\end{equation}and the function $\sigma:L\to K^n$ is bijective, its inverse being $\tau:K^n\to L$ defined as $\tau\left(a\right)\coloneqq\displaystyle{\sum_{j=1}^na_j\omega_j}$ for every $a=\left(a_1,\cdots,a_n\right)\in K^n$.

Given $m\in\mathbb{Z}_{\geq 1}$ and a polynomial $P\in L\left[X_1,\cdots,X_m\right]$, for every function $f:L\to K$ we define \mbox{$f\left(P\right)\in K\left[X_1,\cdots,X_m\right]$} to be the polynomial with coefficients in $K$ which results from applying $f$ to each of the coefficients of $P$. Observe that, rewriting each coefficient of $P$ by using \eqref{lambdaintermsofbasis}, we get\[P\left(\vec{x}\right)=\sum_{j=1}^n\sigma_j\left(P\right)\left(\vec{x}\right)\omega_j\text{ for every $\vec{x}\in L^m$},\]and therefore, if $\vec{x}\in K^m$ then $\sigma_j\left(P\right)\left(\vec{x}\right)\in K$ for every $j\in\left[1,n\right]\cap\mathbb{Z}$, which gives\begin{equation}\label{commevsigma}\sigma_j\left(P\left(\vec{x}\right)\right)=\sigma_j\left(P\right)\left(\vec{x}\right)\text{ for every $\vec{x}\in K^m$ and $j\in\left[1,n\right]\cap\mathbb{Z}$}.\end{equation}

\begin{lemma}\label{restricteddiophdef}Let $K$ be a field, $L$ a finite field extension of $K$ of degree $n\in\mathbb{Z}_{\geq 1}$, let $\omega\in L^n$ be a $K$-ordered basis of $L/K$ and let $\sigma\in\left(L^\ast\right)^n$ be its corresponding ordered dual basis.

Fix $m,r\in\mathbb{Z}_{\geq 1}$, $\vec{x}\in K^m$ and a polynomial $P\in L\left[X_1,\cdots,X_m,Y_1,\cdots Y_r\right]$. For each \mbox{$k\in\left[1,n\right]\cap\mathbb{Z}$} define\begin{multline*}P_k\coloneqq \sigma_k\left[P\left(\left(X_i\right)_{i\in\left[1,m\right]\cap\mathbb{Z}},\left(\sum_{j=1}^nY_{ij}\omega_j\right)_{i\in\left[1,r\right]\cap\mathbb{Z}}\right)\right]\in K\left[\left(X_i\right)_{i\in\left[1,m\right]\cap\mathbb{Z}},\left(Y_{ij}\right)_{\substack{i\in\left[1,r\right]\cap\mathbb{Z}\\j\in\left[1,n\right]\cap\mathbb{Z}}}\right]\\=\sigma_k\left[P\left(X_1,\cdots,X_m,\sum_{j=1}^nY_{1j}\omega_j,\cdots,\sum_{j=1}^nY_{rj}\omega_j\right)\right]\in K\left[X_1,\cdots,X_m,Y_{11},\cdots, Y_{rn}\right].\end{multline*}

The following are equivalent:

\begin{enumerate}[(i)]

\item There exists $\vec{y}\in L^r$ such that $P\left(\vec{x},\vec{y}\right)=0$.

\item There exists $\mathcal{Y}\in K^{r\times n}$ such that for every $k\in\left[1,n\right]\cap\mathbb{Z}$ we have $P_k\left(\vec{x},\mathcal{Y}\right)=0$.

\end{enumerate}
\end{lemma}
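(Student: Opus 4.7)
The plan is to prove the equivalence by explicit translation between solutions $\vec{y} \in L^r$ and their coordinate tuples in $K^{r \times n}$ with respect to the chosen basis $\omega$, using the key compatibility \eqref{commevsigma} that says applying $\sigma_k$ to the coefficients of a polynomial commutes with evaluation at points of $K^m$. The whole lemma is essentially bookkeeping: the $\omega$-basis lets us encode an $L$-valued tuple by $n$ copies of a $K$-valued tuple, and the dual basis lets us encode vanishing in $L$ as simultaneous vanishing of $n$ polynomials over $K$.

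For the direction $(i) \Rightarrow (ii)$, I would start with $\vec{y} = (y_1,\ldots,y_r) \in L^r$ satisfying $P(\vec{x}, \vec{y}) = 0$ and define $\mathcal{Y}_{ij} \coloneqq \sigma_j(y_i) \in K$ for each $i \in [1,r]\cap\mathbb{Z}$ and $j \in [1,n]\cap\mathbb{Z}$. By \eqref{lambdaintermsofbasis}, $y_i = \sum_{j=1}^n \mathcal{Y}_{ij}\omega_j$, so substituting into $P$ gives exactly $\tilde{P}(\vec{x}, \mathcal{Y}) = 0$, where $\tilde{P}$ is the polynomial in $L\left[X_1,\ldots,X_m, Y_{11},\ldots,Y_{rn}\right]$ obtained by replacing each $Y_i$ in $P$ by $\sum_j Y_{ij}\omega_j$. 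Since $(\vec{x}, \mathcal{Y}) \in K^{m+rn}$, the identity \eqref{commevsigma} applied to $\tilde{P}$ yields $P_k(\vec{x}, \mathcal{Y}) = \sigma_k(\tilde{P})(\vec{x}, \mathcal{Y}) = \sigma_k(\tilde{P}(\vec{x}, \mathcal{Y})) = \sigma_k(0) = 0$ for every $k$.

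For the direction $(ii) \Rightarrow (i)$, I would reverse the encoding: given $\mathcal{Y} \in K^{r \times n}$ with $P_k(\vec{x}, \mathcal{Y}) = 0$ for all $k$, define $y_i \coloneqq \sum_{j=1}^n \mathcal{Y}_{ij}\omega_j \in L$. Applying \eqref{commevsigma} to $\tilde{P}$ again, the hypothesis becomes $\sigma_k(\tilde{P}(\vec{x}, \mathcal{Y})) = 0$ for every $k \in [1,n]\cap\mathbb{Z}$. Since $(\sigma_1,\ldots,\sigma_n)$ is a dual basis, the only element of $L$ whose images under every $\sigma_k$ vanish is $0$; hence $\tilde{P}(\vec{x}, \mathcal{Y}) = 0$. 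By construction, $\tilde{P}(\vec{x}, \mathcal{Y}) = P(\vec{x}, \vec{y})$, so $\vec{y}$ is the desired solution in $L^r$.

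There is no real obstacle here: the lemma is a clean formalization of ``writing an $L$-polynomial equation as $n$ coupled $K$-polynomial equations via a basis.'' The only minor point to be careful about is keeping straight that the substitution of $\sum_j Y_{ij}\omega_j$ is made \emph{before} applying $\sigma_k$, so that $P_k$ genuinely has coefficients in $K$ and so that the invocation of \eqref{commevsigma} is legitimate (namely, evaluating at a point of $K^{m+rn}$). Once this is set up, both directions are immediate from \eqref{commevsigma} and the defining property of dual bases.
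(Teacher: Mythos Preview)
Your proof is correct and follows essentially the same approach as the paper's: both directions encode $\vec{y}$ via $\mathcal{Y}_{ij}=\sigma_j(y_i)$, substitute $\sum_j Y_{ij}\omega_j$ for $Y_i$, and invoke \eqref{commevsigma} together with the basis/dual-basis correspondence. The only cosmetic difference is that you name the intermediate polynomial $\tilde{P}$ and argue via $\sigma_k(\tilde{P}(\vec{x},\mathcal{Y}))=0$ for all $k$, whereas the paper writes the same computation as $P(\vec{x},\vec{y})=\sum_k P_k(\vec{x},\mathcal{Y})\omega_k$ and uses linear independence of the $\omega_k$.
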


\begin{proof}If there exists $\vec{y}=\left(y_1,\cdots,y_r\right)\in L^r$ such that $P\left(\vec{x},\vec{y}\right)=0$, define $\mathcal{Y}_{ij}\coloneqq\sigma_j\left(y_i\right)$ for every $i\in\left[1,r\right]\cap\mathbb{Z}$ and $j\in\left[1,n\right]\cap\mathbb{Z}$, so that \eqref{lambdaintermsofbasis} gives $\vec{y}=\displaystyle{\left(\sum_{j=1}^n\mathcal{Y}_{ij}\omega_j\right)_{i\in\left[1,r\right]\cap\mathbb{Z}}}$, therefore\begin{multline}\label{p(vecx,vecy)=sum(pk(vecx,matrixy))}P\left(\vec{x},\vec{y}\right)=P\left(\vec{x},\left(\sum_{j=1}^n\mathcal{Y}_{ij}\omega_j\right)_{i\in\left[1,r\right]\cap\mathbb{Z}}\right)\\=\sum_{k=1}^n\sigma_k\left[P\left(\vec{x},\left(\sum_{j=1}^n\mathcal{Y}_{ij}\omega_j\right)_{i\in\left[1,r\right]\cap\mathbb{Z}}\right)\right]\omega_k=\sum_{k=1}^nP_k\left(\vec{x},\mathcal{Y}\right)\omega_k,\end{multline}where in the last equality we have used \eqref{commevsigma}, which can be applied because $\vec{x}\in K^m$ by hypothesis and $\mathcal{Y}\in K^{r\times n}$ by definition. We conclude that $P_k\left(\vec{x},\mathcal{Y}\right)=0$ for every $k\in\left[1,n\right]\cap\mathbb{Z}$.

Conversely, assume there exists $\mathcal{Y}\in K^{r\times n}$ such that for every $k\in\left[1,n\right]\cap\mathbb{Z}$ we have $P_k\left(\vec{x},\mathcal{Y}\right)=0$. For each $i\in\left[1,r\right]\cap\mathbb{Z}$ define $\vec{y}\coloneqq\displaystyle{\left(\sum_{j=1}^n\mathcal{Y}_{ij}\omega_j\right)_{i\in\left[1,r\right]\cap\mathbb{Z}}\in L^r}$. We then have\[0=\sum_{k=1}^nP_k\left(\vec{x},\mathcal{Y}\right)\omega_k=P\left(\vec{x},\vec{y}\right),\]where for the last equality we have inverted the algebraic procedure in \eqref{p(vecx,vecy)=sum(pk(vecx,matrixy))}. $\blacksquare$

\end{proof}

Lemma \ref{restricteddiophdef} has the following immediate corollary, which allows us to restrict first-order definitions to base fields on finite extensions, and is a generalized version of \cite[Lemma 6.3]{MILLER2022103076}.

\begin{corollary}\label{backandforth}Let $K$ be a non-algebraically closed field and let $L$ be a finite extension of $K$. Fix $m\in\mathbb{Z}_{\geq 1}$ and $D\subseteq L^m$.

\begin{enumerate}[(i)]

\item If $D$ is definable by an existential formula over $L$, then $D\cap K^m$ is definable by an existential formula over $K$.

\item If $D$ is definable by a universal formula over $L$, then $D\cap K^m$ is definable by a universal formula over $K$.

\item If $D$ is $\forall\exists$-definable over $L$, then $D\cap K^m$ is $\forall\exists$-definable over $K$.

\item If $D$ is $\forall\exists\forall$-definable over $L$, then $D\cap K^m$ is $\forall\exists\forall$-definable over $K$.

\end{enumerate}

\end{corollary}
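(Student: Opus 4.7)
The plan is to prove all four statements by a uniform translation mechanism resting on Lemma \ref{restricteddiophdef} together with the non-algebraic-closedness of $K$. The basic operation is to replace each $L$-valued variable $y$ appearing in a defining formula by $n$ new $K$-valued variables via the basis identity $y = \sum_{j=1}^{n} y_j \omega_j$; each atomic equation $P(\vec{y}) = 0$ with $L$-coefficients then becomes, once its inputs are $K$-tuples, the system $\bigwedge_{k=1}^{n} P_k(\vec{y}) = 0$ of equations with $K$-coefficients via the coordinate polynomials $P_k$ supplied by Lemma \ref{restricteddiophdef}. To match the paper's single-equation definition of ``diophantine,'' I will collapse this conjunction into one equation by exploiting that $K$ is not algebraically closed: there exists a polynomial $N \in K[Z_1,\ldots,Z_n]$ whose only $K$-zero is the origin (for instance, an iterated homogenization of some $f \in K[T]$ without $K$-roots, or a suitable iterate of the norm form of a nontrivial finite separable extension of $K$), so that $\bigwedge_k P_k = 0$ is equivalent to $N(P_1,\ldots,P_n) = 0$.

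With this mechanism in hand, part (i) is immediate: if $D = \{\vec{y} \in L^m : \exists \vec{x} \in L^r,\, P(\vec{x},\vec{y}) = 0\}$, then for $\vec{y} \in K^m$ we get $\vec{y} \in D \cap K^m$ iff $\exists \mathcal{X} \in K^{rn}$ with $N(P_1,\ldots,P_n)(\vec{y},\mathcal{X}) = 0$. Part (ii) follows by complementation: a universal definition of $D$ over $L$ yields a diophantine definition of $L^m \setminus D$ over $L$, whose trace on $K^m$ equals $K^m \setminus (D \cap K^m)$ and is diophantine over $K$ by (i), so $D \cap K^m$ is universally definable over $K$.

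For parts (iii) and (iv), I iterate through each quantifier block, working from the outside in. Given a $\forall\exists$-definition $D = \{\vec{y} : \forall \vec{z} \in L^s\, \exists \vec{x} \in L^r,\, P(\vec{x},\vec{z},\vec{y}) = 0\}$, I reparametrize $\vec{z}$ as $\mathcal{Z} \in K^{sn}$ via the basis, so that $\forall \vec{z} \in L^s$ becomes $\forall \mathcal{Z} \in K^{sn}$; the inner existential then carries its parameters $(\vec{y},\mathcal{Z})$ entirely in $K^{m+sn}$, so Lemma \ref{restricteddiophdef} together with the norm-form collapse convert it into an existential formula over $K$, producing a $\forall\exists$-definition of $D \cap K^m$. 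Part (iv) proceeds in the same spirit: reparametrize the outermost $\forall$ and the middle $\exists$ via the basis, then invoke (ii) on the innermost universal layer (whose parameters, by construction, now sit in $K$). The main obstacle I anticipate is not the quantifier accounting, which is essentially formal, but rather the bookkeeping needed to guarantee at every invocation of Lemma \ref{restricteddiophdef} that the outer parameters genuinely lie in $K$ --- a condition ensured precisely by translating the quantifier blocks from the outside in.
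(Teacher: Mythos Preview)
Your proposal is correct and follows essentially the same approach as the paper: use Lemma \ref{restricteddiophdef} to replace each $L$-quantified variable by an $n$-tuple of $K$-variables via a fixed basis, collapse the resulting system of coordinate equations into a single one using the non-algebraic-closedness of $K$ (the paper cites \cite[Lemma 1.2.3]{MR2297245} for this, which is precisely your norm-form trick), obtain (ii) by complementation, and handle (iii)--(iv) by iterating the procedure from the outside in so that at each step the outer parameters lie in $K$. Your write-up is in fact more explicit than the paper's on (iii) and (iv), where the paper simply says these ``follow from iteratively combining the procedure to prove items (i) and (ii), for various values of $m$.''
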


\begin{proof}If $D$ is existentially defined over $L$, there exist $r\in\mathbb{Z}_{\geq 1}$ and $P\in L\left[X_1,\cdots,X_m,Y_1,\cdots Y_r\right]$ such that $D$ is defined by the formula\[\exists y_1\cdots\exists y_r\left(P\left(x_1,\cdots,x_m,y_1,\cdots,y_r\right)=0\right).\]If $n\coloneqq\left[L:K\right]$, for each $k\in\left[1,n\right]\cap\mathbb{Z}$ define $P_k$ as in Lemma \ref{restricteddiophdef}.

Given $\vec{x}\in K^m$, we then have $\vec{x}\in D$ if and only if there exists $\vec{y}\in L^r$ such that $P\left(\vec{x},\vec{y}\right)=0$, which by Lemma \ref{restricteddiophdef} is equivalent to the existence of $\mathcal{Y}\in K^{r\times n}$ such that for every $k\in\left[1,n\right]\cap\mathbb{Z}$ we have $P_k\left(\vec{x},\mathcal{Y}\right)=0$. Since $K$ is not algebraically closed, \cite[Lemma 1.2.3]{MR2297245} says that there exists $Q\in K\left[X_1,\cdots,X_m,Y_{11},\cdots, Y_{rn}\right]$ which vanishes exactly at the common solutions of $\left\{P_k=0:k\in\left[1,n\right]\cap\mathbb{Z}\right\}$. We conclude that $D\cap K^m$ is definable over $K$ with the formula\[\exists y_{11}\cdots\exists y_{rn}\left(Q\left(x_1,\cdots,x_m,y_{11},\cdots,y_{rn}\right)=0\right),\]and this proves $\left(i\right)$.

To prove $\left(ii\right)$, assume $D$ is universal over $L$. Then $L^m\setminus D$ is existential over $L$, which by $\left(i\right)$ shows that $K^m\cap \left(L^m\setminus D\right)=K^m\setminus D=K^m\setminus \left(D\cap K^m\right)$ is existential over $K$, and therefore $D\cap K^m$ is universal over $K$.

Items $\left(iii\right)$ and $\left(iv\right)$, as well as any other statement of the sort, follow from iteratively combining the procedure to prove items $\left(i\right)$ and $\left(ii\right)$, for various values of $m\in\mathbb{Z}_{\geq 1}$. $\blacksquare$

\end{proof}

\subsection{Non-ramification of places by constant extensions}\label{nonramification}

The second issue we needed to study in order for our proposed approach to be effective were the difficulties that ramified places could potentially produce when intersecting Campana and Darmon points with lower-level algebraic function fields. To be concrete, assume we have a number field $K$, an algebraic function field $F$ in one variable over $K$, a finite extension $L$ of $K$ contained in the algebraic closure of $F$, and a fixed $n\in\mathbb{Z}_{\geq 1}$. Campana points over $FL/L$ are defined in terms of conditions of the form $w\left(x\right)\in\mathbb{Z}_{\geq 0}\cup\mathbb{Z}_{\leq -n}$ for various places $w\in\mathcal{V}\left(FL/L\right)$, while Darmon points over $FL/L$ are defined in terms of conditions of the form $w\left(x\right)\in\mathbb{Z}_{\geq 0}\cup n\mathbb{Z}$ for various places $w\in\mathcal{V}\left(FL/L\right)$. Fixing such a $w$, these conditions applied to $x\in F$ become $e\left(w\mid v\right)v\left(x\right)\in\mathbb{Z}_{\geq 0}\cup \mathbb{Z}_{\leq -n}$ and $e\left(w\mid v\right)v\left(x\right)\in\mathbb{Z}_{\geq 0}\cup n\mathbb{Z}$, respectively, where $v\in\mathcal{V}\left(F/K\right)$ lies under $w$. As we can see, ramification indexes are potentially dangerous for our desired interpretation argument.

In this section we will show that no such difficulty will arise in our context, as our ramification indexes will always equal $1$. We start with a sufficient criterion for non-ramification:

\begin{lemma}\label{unramifiedbyextension}Let $v$ be a discrete valuation in a field $K$, and let $f\in\mathcal{O}_v\left[x\right]$ be a monic irreducible polynomial whose reduction modulo $\mathfrak{m}_v$ is separable in $Kv\left[x\right]$. Then $v$ is unramified in $K\left[x\right]/\left(f\right)$.
\end{lemma}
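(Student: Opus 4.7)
The plan is to pass to the henselization $K_v$, apply Hensel's lemma to split $f$ according to the factorization of $\bar{f}$, and then conclude by the fundamental identity $ef=n$ available in the henselian setting.

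First I would set $L=K\left[x\right]/\left(f\right)$ and let $\alpha$ be the image of $x$, so that $\alpha$ is a root of the monic polynomial $f\in\mathcal{O}_v\left[x\right]$ and hence integral over $\mathcal{O}_v$; in particular $w\left(\alpha\right)\geq 0$ for every $\mathbb{Z}$-valuation $w$ on $L$ lying over $v$. Fix one such $w$; the goal is to show $e\left(w\mid v\right)=1$.

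Next, I would pass to the henselization $K_v$ of $K$ at $v$. Since $\bar{f}\in Kv\left[x\right]$ is separable, it factors as $\bar{f}=\bar{h}_1\cdots\bar{h}_r$ with the $\bar{h}_i$ pairwise distinct monic irreducible polynomials, hence pairwise coprime. By Hensel's lemma (applied iteratively to pairs of coprime monic factors), this lifts to a factorization $f=h_1\cdots h_r$ in $\mathcal{O}_{K_v}\left[x\right]$ with each $h_i$ monic and reducing to $\bar{h}_i$ modulo the maximal ideal. Each $h_i$ is irreducible in $K_v\left[x\right]$, since its reduction is, and the standard identification $L\otimes_K K_v\cong \prod_{i=1}^r K_v\left[x\right]/\left(h_i\right)$ makes the $\mathbb{Z}$-valuations on $L$ lying over $v$ correspond bijectively to the factors $h_i$, with the henselization $L_w$ identified with the corresponding $K_v\left[x\right]/\left(h_i\right)$.

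Finally, fixing the index $i$ corresponding to $w$, the extension $L_w/K_v$ is finite and henselian, so there is a unique extension of $v$ to $L_w$ (which coincides with $w$), and it satisfies the fundamental identity $e\left(w\mid v\right)\cdot\left[L_ww:Kv\right]=\left[L_w:K_v\right]=\deg\left(h_i\right)=\deg\left(\bar{h}_i\right)$. On the other hand, the image of $x$ in $L_w$ is integral and reduces to an element $\bar{\alpha}\in L_ww$ satisfying $\bar{h}_i\left(\bar{\alpha}\right)=0$; since $\bar{h}_i$ is irreducible over $Kv$, it is the minimal polynomial of $\bar{\alpha}$, giving $\left[L_ww:Kv\right]\geq\left[Kv\left(\bar{\alpha}\right):Kv\right]=\deg\left(\bar{h}_i\right)$. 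Combining both estimates forces $e\left(w\mid v\right)=1$, as desired. The only delicate step is the invocation of Hensel's lemma together with the bijection between extensions of $v$ to $L$ and irreducible factors of $f$ over $K_v$; both are standard consequences of $K_v$ being henselian, so there is no substantive obstacle.
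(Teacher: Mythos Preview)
Your argument is correct. The paper itself does not give a proof but simply cites \cite[Lemma 2.3.4]{Fried2023}; you have supplied the standard direct argument via henselization and Hensel's lemma, which is essentially what that reference contains. One small remark: you invoke the \emph{equality} $e\left(w\mid v\right)\cdot\left[L_ww:Kv\right]=\left[L_w:K_v\right]$, but in fact only the inequality $e\left(w\mid v\right)\cdot\left[L_ww:Kv\right]\leq\left[L_w:K_v\right]$ (which holds unconditionally) is needed, since combined with your lower bound $\left[L_ww:Kv\right]\geq\deg\left(\bar{h}_i\right)=\left[L_w:K_v\right]$ it already forces $e\left(w\mid v\right)=1$. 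The equality version is also true here---the separability of $\bar f$ forces $f'\neq 0$, hence $f$ is separable over $K$, so $L_w/K_v$ is a separable extension of a henselian discretely valued field and therefore defectless---but you do not need to argue this.
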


\begin{proof}See \cite[Lemma 2.3.4]{Fried2023}. $\blacksquare$
\end{proof}

This criterion will immediately give the non-ramification result we need.

\begin{lemma}\label{unramifiedbyextension2}Let $F$ be an algebraic function field in one variable over a number field $K$, $L$ a finite extension of $K$ contained in the algebraic closure of $F$, $v\in\mathcal{V}\left(F/K\right)$, and $w\in\mathcal{V}\left(FL/L\right)$ lying over $v$. Then $e\left(w\mid v\right)=1$.
\end{lemma}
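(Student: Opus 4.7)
The plan is to reduce to Lemma \ref{unramifiedbyextension} by exhibiting $FL$ as $F[x]/(g)$ for a monic irreducible $g \in \mathcal{O}_v[x]$ whose reduction is separable. Since $K$ has characteristic zero, $L/K$ is separable and the primitive element theorem gives $L=K(\alpha)$ for some $\alpha$ with separable minimal polynomial $f \in K[x]$. Because the $\mathbb{Z}$-valuation $v$ is trivial on $K$, the inclusion $K \subseteq \mathcal{O}_v$ holds and $K$ injects into the residue field $Fv$; in particular $f \in \mathcal{O}_v[x]$ and its reduction $\bar{f} \in Fv[x]$ is just $f$ viewed through the inclusion $K \hookrightarrow Fv$, hence still separable.

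Next, I would let $g \in F[x]$ be the minimal polynomial of $\alpha$ over $F$. Since $\alpha$ generates $L$ over $K$ and is algebraic over $F$, we have $FL = F(\alpha) \cong F[x]/(g)$, so it suffices to check that $v$ is unramified in this extension. As $g$ divides $f$ in $F[x]$ and both are monic, and $\mathcal{O}_v$ is a DVR (in particular integrally closed in $F$), the standard Gauss-lemma type argument for normal domains forces $g \in \mathcal{O}_v[x]$: the roots of $g$ in an algebraic closure of $F$ are roots of $f$, hence integral over $\mathcal{O}_v$, and the elementary symmetric functions in those roots (which are the coefficients of $g$ up to sign) are then integral over $\mathcal{O}_v$ and lie in $F$, so they lie in $\mathcal{O}_v$.

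Reducing modulo $\mathfrak{m}_v$, we get $\bar{g} \mid \bar{f}$ in $Fv[x]$, and since $\bar{f}$ is separable over $Fv$ (being just $f$ after the embedding $K \hookrightarrow Fv$), so is $\bar{g}$. Thus $g$ is monic, irreducible in $F[x]$, lies in $\mathcal{O}_v[x]$, and has separable reduction, so Lemma \ref{unramifiedbyextension} applies with the field $F$ and polynomial $g$. It yields that $v$ is unramified in $F[x]/(g) \cong FL$, which is exactly $e(w \mid v) = 1$ for every $w \in \mathcal{V}(FL/L)$ lying over $v$.

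The only mildly subtle step is the descent $g \in \mathcal{O}_v[x]$ from $f \in \mathcal{O}_v[x]$, but this is a standard consequence of the normality of $\mathcal{O}_v$; everything else is bookkeeping. I do not anticipate a real obstacle, and the argument is essentially a short, direct application of the criterion in Lemma \ref{unramifiedbyextension}.
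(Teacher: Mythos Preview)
Your proposal is correct and follows essentially the same route as the paper: both use the primitive element theorem to write $FL=F(\alpha)$, take $g$ to be the monic irreducible factor of the minimal polynomial $f\in K[x]\subseteq\mathcal{O}_v[x]$ of $\alpha$ having $\alpha$ as a root, observe that $\bar g\mid\bar f$ is separable since $K\hookrightarrow Fv$ has characteristic zero, and then invoke Lemma~\ref{unramifiedbyextension}. The only cosmetic difference is that the paper picks $g$ directly as a monic irreducible factor of $f$ in $\mathcal{O}_v[x]$, whereas you define $g$ as the minimal polynomial of $\alpha$ over $F$ and supply the integrality argument to descend its coefficients to $\mathcal{O}_v$; these produce the same $g$.
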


\begin{proof}Since $L/K$ is finite and separable because $\operatorname{Char}\left(K\right)=0$, by the primitive element theorem (\cite[Chapter V, Theorem 4.6]{MR1878556}) there exists $\alpha\in L$ such that $L=K\left(\alpha\right)$, so that $FL=F\left(\alpha\right)$. Let $f\in K\left[x\right]\subseteq \mathcal{O}_v\left[x\right]$ be the minimal polynomial of $\alpha$ over $K$. Pick a monic irreducible factor $g$ of $f$ in $\mathcal{O}_v\left[x\right]$ having $\alpha$ as a root, so that $FL=F\left(\alpha\right)\cong F\left[x\right]/\left(g\right)$. Observe that the reduction of $g$ modulo $\mathfrak{m}_v$ is separable because $K$ is a characteristic-zero subfield of $Fv$, so Lemma \ref{unramifiedbyextension} gives the desired conclusion. $\blacksquare$\end{proof}

\section{First-order definitions}\label{setupandproofs}

We are now ready to provide our desired first-order definitions. We start by introducing some notation.

\begin{definition}Let $K$ be a field and let $F$ be an algebraic function field in one variable over $K$. Let $n\in\mathbb{Z}_{\geq 1}$, let $b,c\in F$ be such that $c\left(1+4b\right)\neq 0$, and let $a_1,\cdots,a_n\in F^\times$. We define\begin{align*}\Delta_{a_1,\cdots,a_n,b,F/K}&\coloneqq\left\{v\in\mathcal{V}\left(F/K\right):\left[\llangle a_1,\cdots,a_n,b]]_{F}\right]+WG^{n+2}\left(F\right)\not\in\operatorname{ker}\left(\partial_v^n\right)\right\},\\\Delta_c^{a_1,\cdots,a_n,b,F/K}&\coloneqq\Delta_{a_1,\cdots,a_n,b,F/K}\cap\left\{v\in\mathcal{V}\left(F/K\right):2\nmid v\left(c\right)\right\},\\T_{a_1,\cdots,a_n,b,F/K}&\coloneqq \bigcap_{v\in\Delta_{a_1,\cdots,a_n,b,F/K}}\mathcal{O}_v,\\J_{c}\left(a_1,\cdots,a_n,b,F/K\right)&\coloneqq \bigcap_{v\in\Delta_c^{a_1,\cdots,a_n,b,F/K}}\mathfrak{m}_v.\end{align*}
\end{definition}

We begin by showing that these definitions allow us to parametrize finite subsets of $\mathbb{Z}$-valuations.

\begin{proposition}\label{paramplaces}Let $K$ be a field with $\operatorname{Char}\left(K\right)\neq 2$, \mbox{$d\coloneqq\operatorname{cd}_2\left(K\right)\in\mathbb{Z}_{\geq 1}$,} and $WG^d\left(K\right)$ is nontrivial. Let $F$ be a regular algebraic function field in one variable over $K$ such that \mbox{$\left(d+1\right)$-fold} quadratic Pfister forms are linked over $F$, and let $S$ be a finite subset of $\mathcal{V}\left(F/K\right)$ of even cardinality. Then there exist \mbox{$a_1,\cdots,a_d,b,c\in \displaystyle{\bigcap_{v\in S}\mathcal{O}_v}$} such that $v\left(c\right)=1$ for all $v\in S$, $a_n\in F^\times$, $a_1,\cdots,a_{d-1},1+4b\in\displaystyle{\bigcap_{v\in S}\mathcal{O}_v^\times}$, and\[S=\Delta_{a_1,\cdots,a_n,b,F/K}=\Delta_c^{a_1,\cdots,a_n,b,F/K}.\]
\end{proposition}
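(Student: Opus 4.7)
The plan is to use the Reciprocity Theorem \ref{recthm} combined with the linkage hypothesis to build a $(d+1)$-fold Pfister form whose residue map is nontrivial at exactly the places in $S$, and then adjust parameters via Proposition \ref{rewrparampfister} and weak approximation.

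First I would set up the exact sequence in a form without quotients. By Proposition \ref{dim2andtrdeg}, $\operatorname{cd}_2(F)=d+1$; by Proposition \ref{dim2andplace} applied to the henselization (together with the fact that $Fv/K$ is a finite extension), $\operatorname{cd}_2(Fv)=d$ for every $v\in\mathcal{V}(F/K)$. Proposition \ref{dim2k} then gives $WG^{d+2}(F)=0$ and $WG^{d+1}(Fv)=0$, so Theorem \ref{recthm} reads
\[
WG^{d+1}(F)\xrightarrow{\bigoplus \partial_v^d} \bigoplus_{v\in\mathcal{V}(F/K)}WG^d(Fv)\xrightarrow{\sum s_{Fv/K}}WG^d(K)\to 0.
\]

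Next I would construct the right element of the middle term. Fix a nontrivial $\gamma\in WG^d(K)$, which exists by hypothesis. By Lemma \ref{surjectivityoftransfer} the transfer $s_{Fv/K}$ is surjective for each $v\in S$, so I can choose $\alpha_v\in WG^d(Fv)$ with $s_{Fv/K}(\alpha_v)=\gamma$; since $\gamma\neq 0$, automatically $\alpha_v\neq 0$. Setting $\alpha_v=0$ for $v\notin S$ gives an element $\alpha\in\bigoplus_v WG^d(Fv)$ with $\sum_v s_{Fv/K}(\alpha_v)=|S|\cdot\gamma$. Because $WG^d(K)=WG^d(K)/WG^{d+1}(K)$ is $2$-torsion by Lemma \ref{2-torsion} and $|S|$ is even, this sum vanishes, so $\alpha$ lifts to some $\xi\in WG^{d+1}(F)$ by exactness. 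The linkage hypothesis (together with $WG^{d+2}(F)=0$) forces $\xi=[\pi]$ for some $(d+1)$-fold Pfister form $\pi=\llangle a_1,\ldots,a_d,b]]_F$; by construction $\partial_v^d([\pi])=\alpha_v$ is nonzero exactly for $v\in S$, so $S=\Delta_{a_1,\ldots,a_d,b,F/K}$.

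To secure the integrality and unit conditions at $S$, I would then apply Proposition \ref{rewrparampfister} directly to $\pi$ with the finite set $S$, rewriting $\pi$ up to isometry --- hence preserving its Witt class and therefore $\Delta_{a_1,\ldots,a_d,b,F/K}$ --- so that the $a_i$'s, $b$, and $1+4b$ satisfy the required conditions. Finally, for the element $c$, I would apply weak approximation to the finitely many distinct valuations of $S$ to produce $c\in F$ with $v(c)=1$ for every $v\in S$. This gives $c\in\bigcap_{v\in S}\mathcal{O}_v$, and since $v(c)=1$ is odd for each $v\in S$ I get $S\subseteq \Delta_c^{a_1,\ldots,a_d,b,F/K}$; the reverse inclusion is automatic from $\Delta_c^{a_1,\ldots,a_d,b,F/K}\subseteq \Delta_{a_1,\ldots,a_d,b,F/K}=S$.

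The crucial step is the parity argument: it is precisely the combination of the $2$-torsion of $WG^d(K)$ (Lemma \ref{2-torsion}) with the evenness of $|S|$ that allows the ``same-$\gamma$-everywhere'' construction to land in the kernel of the transfer sum, and this is the only place where the even cardinality assumption is used. Beyond that, everything is routine bookkeeping of which levels $WG^n$ vanish and an invocation of linkage to convert a general class into an honest Pfister form.
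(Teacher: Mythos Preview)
Your proposal is correct and follows essentially the same approach as the paper's proof: pick a nonzero class in $WG^d(K)$, lift it via Lemma \ref{surjectivityoftransfer} at each $v\in S$, use the parity of $|S|$ together with $2$-torsion to land in the kernel, invoke exactness and linkage to obtain the Pfister form, and then apply Proposition \ref{rewrparampfister}. The only cosmetic difference is that the paper constructs $c$ via the Chinese Remainder Theorem in $\mathcal{O}_{F/K}$ rather than weak approximation, which amounts to the same thing.
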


\begin{proof}Fix a nonzero $\alpha\in WG^d\left(K\right)$. For each $v\in S$, by Lemma \ref{surjectivityoftransfer} there exists $\beta_v\in WG^d\left(Fv\right)$ such that $s_{Fv/K}\left(\beta_v\right)=\alpha$. Observe that $\beta_v$ is nonzero since $\alpha$ is nonzero. Define\[\beta\coloneqq \sum_{v\in S}\beta_v\in\bigoplus_{v\in\mathcal{V}\left(F/K\right)}WG^d\left(Fv\right),\]so that the image of $\beta$ under the second arrow of \eqref{reciprocitysequence} equals $\left|S\right|\alpha$. Proposition \ref{dim2k} gives that $WG^{d+1}\left(K\right)$ is trivial, so by Lemma \ref{2-torsion} the group $WG^{d}\left(K\right)$ is $2$-torsion, which together with the fact that $\left|S\right|$ is even gives that $\beta$ belongs to the kernel of the second arrow of \eqref{reciprocitysequence}. By the exactness of the sequence given by Theorem \ref{recthm} there exists $\gamma\in WG^{d+1}\left(F\right)$ such that:\[\text{If $v\in\mathcal{V}\left(F/K\right)$, then }\partial_v^n\left(\gamma\right)=\begin{cases}\beta_v,&v\in S,\\0,&v\not\in S.\end{cases}\]Since $\left(d+1\right)$-fold quadratic Pfister forms are linked over $F$ then $\gamma=\left[q\right]$ for some \mbox{$\left(d+1\right)$-fold} quadratic Pfister form over $F$. Lemma \ref{rewrparampfister} gives $q\cong\llangle a_1,\cdots,a_d,b]]_F$ for some \mbox{$a_1,\cdots,a_d,b\in \displaystyle{\bigcap_{v\in S}\mathcal{O}_v}$} such that $a_d\neq 0$ and $a_1,\cdots,a_{d-1},1+4b\in\displaystyle{\bigcap_{v\in S}\mathcal{O}_v^\times}$. The fact that $\beta_v$ is nonzero for each $v\in S$ implies $S=\Delta_{a_1,\cdots,a_n,b,F/K}$.

For each $v\in S$ fix $\pi_v\in F$ with $v\left(\pi_v\right)=1$, and use the Chinese Remainder Theorem to find $c\in \mathcal{O}_{F/K}$ satisfying $c\equiv \pi_v\pmod{\mathfrak{m}_v^2}$ for all $v\in S$. We therefore get $v\left(c\right)=1$ for all $v\in S$, giving $S\subseteq \left\{v\in\mathcal{V}\left(F/K\right):2\nmid v\left(c\right)\right\}$ and thus $S=\Delta_{a_1,\cdots,a_n,b,F/K}=\Delta_c^{a_1,\cdots,a_n,b,F/K}$. $\blacksquare$
\end{proof}

Combining Proposition \ref{paramplaces} with Proposition \ref{nonrealnumbfieldsarek3} we immediately obtain:

\begin{proposition}\label{paramplaces2}Let $K$ be a non-real number field and let $F$ be a regular algebraic function field in one variable over $K$. If $S$ is a finite subset of $\mathcal{V}\left(F/K\right)$ of even cardinality, there exist $a_1,a_2,b,c\in\displaystyle{\bigcap_{v\in S}\mathcal{O}_v}$ such that $a_2\in F^\times$, $a_1,1+4b\in\displaystyle{\bigcap_{v\in S}\mathcal{O}_v^\times}$, $v\left(c\right)=1$ for all $v\in S$, and\[S=\Delta_{a_1,a_2,b,F/K}=\Delta_c^{a_1,a_2,b,F/K}.\]
\end{proposition}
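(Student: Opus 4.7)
The plan is to verify that the hypotheses of Proposition \ref{paramplaces} are exactly met in this setting, with $d=2$, and then invoke Proposition \ref{paramplaces} directly.

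First I would observe that any non-real number field $K$ has $\operatorname{Char}(K)=0\neq 2$, so the characteristic hypothesis of Proposition \ref{paramplaces} is automatic. Next, applying Proposition \ref{nonrealnumbfieldsarek3} to $K$, I obtain simultaneously that $d\coloneqq\operatorname{cd}_2(K)=2\in\mathbb{Z}_{\geq 1}$, that $WG^d(K)=WG^2(K)$ is nontrivial, and that $3$-fold quadratic Pfister forms over $F$ (which is exactly $(d+1)$-fold with $d=2$) are linked. Together with the assumption that $F$ is a regular algebraic function field in one variable over $K$ and that $S$ is a finite subset of $\mathcal{V}(F/K)$ of even cardinality, this is precisely the full list of hypotheses needed to invoke Proposition \ref{paramplaces}.

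Applying Proposition \ref{paramplaces} with $d=2$ then produces elements $a_1, a_2, b, c \in \bigcap_{v\in S}\mathcal{O}_v$ with $a_2\in F^\times$, $a_1, 1+4b \in \bigcap_{v\in S}\mathcal{O}_v^\times$, $v(c)=1$ for all $v\in S$, and $S = \Delta_{a_1,a_2,b,F/K} = \Delta_c^{a_1,a_2,b,F/K}$, which is exactly the desired conclusion.

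There is no real obstacle here: the statement is a corollary whose content is simply specializing the general parametrization result (Proposition \ref{paramplaces}) to the concrete class of base fields where the required $2$-cohomological dimension and linkage hypotheses are known to hold unconditionally (Proposition \ref{nonrealnumbfieldsarek3}). The only thing to be careful about is bookkeeping the subscript $d$: the general proposition produces a sequence $a_1,\dots,a_d$, which in our case collapses to the pair $a_1,a_2$ because $d=2$.
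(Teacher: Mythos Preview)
Your proposal is correct and matches the paper's approach exactly: the paper states that Proposition \ref{paramplaces2} follows immediately by combining Proposition \ref{paramplaces} with Proposition \ref{nonrealnumbfieldsarek3}, which is precisely what you do by checking that a non-real number field satisfies the hypotheses of Proposition \ref{paramplaces} with $d=2$ and then invoking it.
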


\subsection{Some preliminary uniform first-order definitions}

To first-order define Campana points and Darmon points in our desired context, we will make use of some already known first-order descriptions. These are:

\begin{proposition}\label{basicdefinitions}Let $K$ be a non-real number field and let $F$ be an algebraic function field in one variable over $K$. The sets\begin{align*}\left\{\left(a_1,a_2,b,x\right)\in F^4\right.&\left.:a_1a_2\left(1+4b\right)\neq 0\wedge x\in T_{a_1,a_2,b,F/K}\right\},\\\left\{\left(a_1,a_2,b,c,x\right)\in F^5\right.&\left.:a_1a_2c\left(1+4b\right)\neq 0\wedge x\in J_c\left(a_1,a_2,b,F/K\right)\right\}\end{align*}are diophantine over $F$.
\end{proposition}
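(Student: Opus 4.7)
The plan is to derive both diophantine definitions as direct instances of the uniform existential definability of Pfister-form holomorphy rings and their associated maximal-ideal intersections established in \cite{daans2024universallydefiningsubringsfunction} (which in turn leans on \cite{becher2025uniformexistentialdefinitionsvaluations}). The proof will therefore consist of verifying that the hypotheses of those general results are met in our setting and matching notation.

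First I would check the ambient hypotheses. Because $K$ is a non-real number field, Proposition \ref{nonrealnumbfieldsarek3} gives $d \coloneqq \operatorname{cd}_2(K) = 2$, the nontriviality of $WG^2(K)$, and linkage of $3$-fold quadratic Pfister forms over any algebraic function field in one variable over $K$. Consequently, the residue maps $\partial_v^2$ of Proposition \ref{residuemaps} are all defined on $WG^3(F)$, the Reciprocity Theorem \ref{recthm} applies, and the sets $\Delta_{a_1,a_2,b,F/K}$ and $\Delta_c^{a_1,a_2,b,F/K}$ are finite subsets of $\mathcal{V}(F/K)$ whenever $a_1 a_2(1+4b) \neq 0$ (respectively, and $c \neq 0$). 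This places us precisely in the framework of \cite{daans2024universallydefiningsubringsfunction}, where for a $3$-fold Pfister form $\pi = \llangle a_1, a_2, b]]_F$ explicit existential formulas are constructed for the holomorphy ring $\bigcap_{v \in \Delta_{\pi}} \mathcal{O}_v = T_{a_1,a_2,b,F/K}$ and, analogously, for the ideal $\bigcap_{v \in \Delta_{\pi} \cap \{v : 2 \nmid v(c)\}} \mathfrak{m}_v = J_c(a_1,a_2,b,F/K)$, with the key feature that the defining polynomial depends on $(a_1,a_2,b)$ (respectively $(a_1,a_2,b,c)$) in a polynomial and hence uniform way. I would then translate those existential definitions into the notation used here, and append the open conditions $a_1 a_2 (1+4b) \neq 0$ and $a_1 a_2 c (1+4b) \neq 0$ — which are themselves diophantine over $F$ via the usual trick of introducing an inverse — to obtain a single existential formula in the ring language defining each of the two displayed sets.

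I do not foresee a genuine obstacle, since the conclusion is a re-statement in our notation of a general theorem from \cite{daans2024universallydefiningsubringsfunction}. The only real bookkeeping is to ensure that the two variants we need (intersections of valuation rings versus intersections of maximal ideals) and the precise subset $\Delta_c^{a_1,a_2,b,F/K}$ carved out by the parity condition $2 \nmid v(c)$ are correctly matched to the corresponding statements in the reference. Once this identification is made, the rest is mechanical, and the uniformity of the existential formulas over admissible parameter values is built into the construction there.
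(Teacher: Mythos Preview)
Your approach matches the paper's: invoke the uniform existential definability results from the literature and append the open nondegeneracy condition via the inverse trick. Two small points of difference are worth noting. For the first set, the paper cites \cite[Theorem~10.13]{becher2025uniformexistentialdefinitionsvaluations} directly rather than routing through \cite{daans2024universallydefiningsubringsfunction}. For the second set, rather than your ``analogously'' appeal to a separate black box for $J_c$, the paper reduces it to the first set via the explicit identity
\[J_c\left(a_1,a_2,b,F/K\right)=\left(c\cdot F^{2}\cap \left(1-F^{2}\cdot T_{a_1,a_2,b,F/K}^\times\right)\right)\cdot T_{a_1,a_2,b,F/K},\]
quoted from \cite[Lemma~5.4]{MR4378716}; this makes the diophantineness of $J_c$ an immediate corollary of that of $T$ once one observes that $T^\times$ and squares are diophantine. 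One caution: your invocation of Theorem~\ref{recthm} is out of place, since that result assumes $F/K$ is regular, which is not hypothesized in this proposition; fortunately neither reciprocity nor the setup you describe is actually needed here, as the cited existential definitions apply without it.
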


\begin{proof}The assertion regarding the first set is a particular case of \cite[Theorem 10.13]{becher2025uniformexistentialdefinitionsvaluations}, while the second assertion follows directly from the first one and the following equality, which is a direct consequence of \cite[Lemma 5.4]{MR4378716}:\[J_c\left(a_1,a_2,b,F/K\right)=\left(c\cdot F^{2}\cap \left(1-F^{2}\cdot T_{a_1,a_2,b,F/K}^\times\right)\right)\cdot T_{a_1,a_2,b,F/K}\]for all $a_1,a_2,b,c\in F$ with $a_1a_2c\left(1+4b\right)\neq 0$. $\blacksquare$
\end{proof}

We introduce another notation: if $K$ is a non-real number field and $F$ is an algebraic function field in one variable over $K$, observe that if $S$ and $T$ are two finite subsets of $\mathcal{V}\left(F/K\right)$ then\begin{equation}\label{ja+jb=jab}\bigcap_{v\in S}\mathfrak{m}_v+\bigcap_{v\in T}\mathfrak{m}_v=\bigcap_{v\in S\cap T}\mathfrak{m}_v,\end{equation}so that $S\cap T=\emptyset$ if and only if $1\in\displaystyle{\bigcap_{v\in S}\mathfrak{m}_v+\bigcap_{v\in T}\mathfrak{m}_v}$.

For all $a_1,a_2,b,c,a_1',a_2',b',c'\in F$ with $a_1a_2c\left(1+4b\right)a_1'a_2'c'\left(1+4b'\right)\neq 0$ we define\begin{align*}\Omega_{c,c'}^{a_1,a_2,b,a_1',a_2',b',F/K}&\coloneqq \Delta_c^{a_1,a_2,b,F/K}\cap \Delta_{c'}^{a_1',a_2',b',F/K},\\J_{c,c'}\left(a_1,a_2,b,a_1',a_2',b',F/K\right)&\coloneqq J_c\left(a_1,a_2,b,F/K\right)+J_{c'}\left(a_1',a_2',b',F/K\right)\\&=\bigcap_{v\in\Omega_{c,c'}^{a_1,a_2,b,a_1',a_2',b',F/K}}\mathfrak{m}_v.\end{align*}Recalling that the formula $x\neq 0$ can be existentially defined as $\exists y\left(xy-1=0\right)$ and that any finite number of polynomial equations over $K$ can be reduced to a single one by \mbox{\cite[Lemma 1.2.3]{MR2297245},} then by Proposition \ref{basicdefinitions} the formula\begin{multline*}a_1a_2c\left(1+4b\right)a_1'a_2'c'\left(1+4b'\right)\neq 0\\\wedge \exists y\left(y\in J_c\left(a_1,a_2,b,F/K\right)\wedge x-y\in J_{c'}\left(a_1',a_2',b',F/K\right)\right)\end{multline*}proves the following result:

\begin{lemma}\label{jacobson}Let $K$ be a non-real number field and let $F$ be an algebraic function field in one variable over $K$. The set\begin{multline*}\left\{\left(a_1,a_2,b,c,a_1',a_2',b',c',x\right)\in F^9:a_1a_2c\left(1+4b\right)a_1'a_2'c'\left(1+4b'\right)\neq 0\right.\\\left.\wedge \,x\in J_{c,c'}\left(a_1,a_2,b,a_1',a_2',b',F/K\right)\right\}\end{multline*}is diophantine over $F$.
\end{lemma}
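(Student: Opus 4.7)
The plan is to observe that the formula displayed immediately before the statement of Lemma \ref{jacobson} already does essentially all the work; what remains is to verify that it can be written as an existential formula over $F$. By construction, $J_{c,c'}\left(a_1,a_2,b,a_1',a_2',b',F/K\right)$ is the Minkowski sum $J_c\left(a_1,a_2,b,F/K\right)+J_{c'}\left(a_1',a_2',b',F/K\right)$, so membership of $x$ in it is equivalent to the existence of a witness $y\in F$ satisfying $y\in J_c\left(a_1,a_2,b,F/K\right)$ and $x-y\in J_{c'}\left(a_1',a_2',b',F/K\right)$. This is the only algebraic reduction required, and it merely trades membership in a sum for the standard existential quantifier over a witness.

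Next I would invoke Proposition \ref{basicdefinitions} twice. The first application produces an existential formula (with auxiliary quantified variables) defining
\[
\left\{\left(a_1,a_2,b,c,y\right)\in F^5:a_1a_2c\left(1+4b\right)\neq 0\wedge y\in J_c\left(a_1,a_2,b,F/K\right)\right\},
\]
and the second, after the substitution of $x-y$ in place of $y$, defines the analogous set for the primed tuple. The non-vanishing clause $a_1a_2c\left(1+4b\right)a_1'a_2'c'\left(1+4b'\right)\neq 0$ is itself existential through the standard trick $u\neq 0\iff\exists w\left(uw-1=0\right)$, applied coordinate-by-coordinate.

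Conjoining these three existential subformulas and the extra existential quantifier on $y$ gives an existential formula over $F$ whose matrix is a finite conjunction of polynomial equations. Since $F$ is an algebraic function field in one variable over a number field and is therefore not algebraically closed, I can apply \cite[Lemma 1.2.3]{MR2297245} to compress that finite conjunction into a single polynomial equation, yielding the desired diophantine definition. There is no genuine obstacle: the lemma is just the formal packaging of the fact that diophantine sets are closed under addition of witnesses and projection, composed with the uniform parametric diophantineness of $J_c\left(\cdots\right)$ supplied by Proposition \ref{basicdefinitions}.
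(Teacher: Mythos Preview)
Your proposal is correct and matches the paper's own argument essentially verbatim: the paper also just unpacks the definition $J_{c,c'}=J_c+J_{c'}$ via the witness $y$, invokes Proposition \ref{basicdefinitions} for the two $J$-clauses, handles the nonvanishing condition with the $\exists w\,(uw-1=0)$ trick, and collapses the resulting system with \cite[Lemma 1.2.3]{MR2297245}.
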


Also, a direct consequence of Proposition \ref{paramplaces2} is:

\begin{proposition}\label{paramplaces3}Let $K$ be a non-real number field and let $F$ be a regular algebraic function field in one variable over $K$. If $S$ is a finite subset of $\mathcal{V}\left(F/K\right)$, there exist $a_1,a_2,b,c,a_1',a_2',b',c'\in F$ such that $a_1a_2c\left(1+4b\right)a_1'a_2'c'\left(1+4b'\right)\neq 0$ and $S=\Omega_{c,c'}^{a_1,a_2,b,a_1',a_2',b',F/K}$.
\end{proposition}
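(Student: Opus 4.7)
The plan is to deduce the result directly from Proposition \ref{paramplaces2} by writing $S$ as an intersection $S=S_1\cap S_2$ of two finite subsets of $\mathcal{V}\left(F/K\right)$, each of \emph{even} cardinality, and then parametrizing $S_1$ and $S_2$ separately.

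To produce such $S_1$ and $S_2$, I will split into cases according to the parity of $\left|S\right|$. If $\left|S\right|$ is even, I simply set $S_1\coloneqq S\eqqcolon S_2$. If $\left|S\right|$ is odd, I will use that $\mathcal{V}\left(F/K\right)$ is infinite: picking any $t\in F$ transcendental over $K$, the restriction map $\mathcal{V}\left(F/K\right)\to\mathcal{V}\left(K\left(t\right)/K\right)$ is surjective with finite fibres, and the latter set is clearly infinite (parametrized, for instance, by the monic irreducibles in $K\left[t\right]$ together with the place at infinity, of which there are infinitely many since $K$ is infinite). Consequently, I may pick two distinct $v_1,v_2\in\mathcal{V}\left(F/K\right)\setminus S$ and set $S_1\coloneqq S\cup\left\{v_1\right\}$ and $S_2\coloneqq S\cup\left\{v_2\right\}$; both have even cardinality and satisfy $S_1\cap S_2=S$.

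With $S_1$ and $S_2$ in hand, I apply Proposition \ref{paramplaces2} to $S_1$ to obtain $a_1,a_2,b,c\in F$ with $a_1a_2c\left(1+4b\right)\neq 0$ and $S_1=\Delta_c^{a_1,a_2,b,F/K}$, and similarly to $S_2$ to obtain $a_1',a_2',b',c'\in F$ with $a_1'a_2'c'\left(1+4b'\right)\neq 0$ and $S_2=\Delta_{c'}^{a_1',a_2',b',F/K}$. By the very definition of $\Omega_{c,c'}^{a_1,a_2,b,a_1',a_2',b',F/K}$ this gives
\[\Omega_{c,c'}^{a_1,a_2,b,a_1',a_2',b',F/K}=\Delta_c^{a_1,a_2,b,F/K}\cap\Delta_{c'}^{a_1',a_2',b',F/K}=S_1\cap S_2=S,\]
as required. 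There is no genuine obstacle here: Proposition \ref{paramplaces2} does all of the substantive work, and the only additional input is the standard fact that $\mathcal{V}\left(F/K\right)$ is infinite, which is only needed in order to handle the odd-cardinality case.
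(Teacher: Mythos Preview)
Your proof is correct and is exactly the argument the paper has in mind: the paper states Proposition~\ref{paramplaces3} as ``a direct consequence of Proposition~\ref{paramplaces2}'' without further detail, and your reduction---writing $S$ as an intersection of two even-cardinality sets by adjoining distinct places $v_1,v_2\in\mathcal{V}(F/K)\setminus S$ when $|S|$ is odd---is precisely how one unpacks that phrase. The only extra ingredient you invoke, the infinitude of $\mathcal{V}(F/K)$, is standard and your justification via $K(t)$ is fine.
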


Another important first-order definable notion in this context is the empty intersection between two subsets of $\mathbb{Z}$-valuations that are trivial on the base field. We now prove this to be a diophantine statement.

\begin{lemma}\label{emptyint}Let $K$ be a non-real number field and let $F$ be an algebraic function field in one variable over $K$. The set\begin{multline*}\left\{\left(a_1,a_2,b,c,a_1',a_2',b',c',\alpha_1,\alpha_2,\beta,\gamma,\alpha_1',\alpha_2',\beta',\gamma'\right)\in F^{16}:\right.\\\left.a_1a_2c\left(1+4b\right)a_1'a_2'c'\left(1+4b'\right)\alpha_1\alpha_2\gamma\left(1+4\beta\right)\alpha_1'\alpha_2'\gamma'\left(1+4\beta'\right)\neq 0\right.\\\left. \wedge\, \Omega_{c,c'}^{a_1,a_2,b,a_1',a_2',b',F/K}\cap \Omega_{\gamma,\gamma'}^{\alpha_1,\alpha_2,\beta,\alpha_1',\alpha_2',\beta',F/K}=\emptyset\right\}\end{multline*}is diophantine.
\end{lemma}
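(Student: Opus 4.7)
The plan is to use the identity \eqref{ja+jb=jab} to translate the set-theoretic emptiness condition into the algebraic statement that $1$ belongs to the sum $J_{c,c'}(a_1,a_2,b,a_1',a_2',b',F/K) + J_{\gamma,\gamma'}(\alpha_1,\alpha_2,\beta,\alpha_1',\alpha_2',\beta',F/K)$, and then to invoke Lemma \ref{jacobson} to conclude diophantineness.

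First I would set $S \coloneqq \Omega_{c,c'}^{a_1,a_2,b,a_1',a_2',b',F/K}$ and $T \coloneqq \Omega_{\gamma,\gamma'}^{\alpha_1,\alpha_2,\beta,\alpha_1',\alpha_2',\beta',F/K}$. The definitions of the $J$-sets combined with \eqref{ja+jb=jab} give
\[
J_{c,c'}(a_1,a_2,b,a_1',a_2',b',F/K) + J_{\gamma,\gamma'}(\alpha_1,\alpha_2,\beta,\alpha_1',\alpha_2',\beta',F/K) = \bigcap_{v \in S \cap T} \mathfrak{m}_v.
\]
Since $1 \notin \mathfrak{m}_v$ for any $v \in \mathcal{V}(F/K)$, while $1$ lies in the empty intersection (interpreted as $F$), this yields the equivalence: $S \cap T = \emptyset$ if and only if there exist $y_1, y_2 \in F$ with $y_1 \in J_{c,c'}(a_1,a_2,b,a_1',a_2',b',F/K)$, $y_2 \in J_{\gamma,\gamma'}(\alpha_1,\alpha_2,\beta,\alpha_1',\alpha_2',\beta',F/K)$, and $y_1 + y_2 = 1$.

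Second, I would package this equivalence as an explicit existential formula. By Lemma \ref{jacobson}, each of the two membership conditions (together with the corresponding eight non-vanishing conditions on the parameters) is diophantine over $F$; the remaining non-vanishing constraints are existentially expressible via $\exists z\,(xz - 1 = 0)$; and the conjunction of all the resulting polynomial equations collapses to a single one using \cite[Lemma 1.2.3]{MR2297245}. The resulting existential formula defines exactly the set in the statement.

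No serious obstacle is expected: the lemma is essentially a formal consequence of Lemma \ref{jacobson} combined with the ideal-sum identity \eqref{ja+jb=jab}, and the only care needed is the parameter bookkeeping required to apply Lemma \ref{jacobson} to both $J$-sets simultaneously without conflating the two groups of variables.
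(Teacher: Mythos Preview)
Your proposal is correct and follows essentially the same approach as the paper: both use \eqref{ja+jb=jab} to reformulate $S\cap T=\emptyset$ as $1\in J_{c,c'}+J_{\gamma,\gamma'}$ and then invoke the diophantineness of membership in the $J$-sets. The only cosmetic difference is that you cite Lemma \ref{jacobson} for this last step while the paper cites Proposition \ref{basicdefinitions}; since Lemma \ref{jacobson} is precisely the statement needed here, your reference is if anything the more accurate one.
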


\begin{proof}Follows directly from \eqref{ja+jb=jab}, Proposition \ref{basicdefinitions}, and the formula\begin{multline*}a_1a_2c\left(1+4b\right)a_1'a_2'c'\left(1+4b'\right)\alpha_1\alpha_2\gamma\left(1+4\beta\right)\alpha_1'\alpha_2'\gamma'\left(1+4\beta'\right)\neq 0\\\wedge 1\in \left[J_{c,c'}\left(a_1,a_2,b,a_1',a_2',b',F/K\right)+J_{\gamma,\gamma'}\left(\alpha_1,\alpha_2,\beta,\alpha_1',\alpha_2',\beta',F/K\right)\right].\text{ }\blacksquare\end{multline*}
\end{proof}

In the above proof, if we change $J_{\gamma,\gamma'}\left(\alpha_1,\alpha_2,\beta,\alpha_1',\alpha_2',\beta',F/K\right)$ by $J_{\gamma}\left(\alpha_1,\alpha_2,\beta,F/K\right)$ and get rid of the variables $\alpha_1',\alpha_2',\beta',\gamma'$, we immediately get:

\begin{lemma}\label{emptyint2}Let $K$ be a non-real number field and let $F$ be an algebraic function field in one variable over $K$. The set\begin{multline*}\left\{\left(a_1,a_2,b,c,a_1',a_2',b',c',\alpha_1,\alpha_2,\beta,\gamma\right)\in F^{12}:\right.\\\left.a_1a_2c\left(1+4b\right)a_1'a_2'c'\left(1+4b'\right)\alpha_1\alpha_2\gamma\left(1+4\beta\right)\neq 0\right.\\\left. \wedge\, \Omega_{c,c'}^{a_1,a_2,b,a_1',a_2',b',F/K}\cap \Delta_{\gamma}^{\alpha_1,\alpha_2,\beta,F/K}=\emptyset\right\}\end{multline*}is diophantine.
\end{lemma}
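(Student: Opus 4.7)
The plan is to imitate the proof of Lemma \ref{emptyint} almost verbatim, replacing the second $\Omega$-set by the single $\Delta$-set appearing here. The crucial algebraic fact, already recorded as \eqref{ja+jb=jab}, is that for any two finite subsets $S, T \subseteq \mathcal{V}(F/K)$ one has
\[ \bigcap_{v \in S}\mathfrak{m}_v + \bigcap_{v \in T}\mathfrak{m}_v \;=\; \bigcap_{v \in S \cap T}\mathfrak{m}_v, \]
so $S \cap T = \emptyset$ if and only if $1$ lies in this sum of ideals. I will apply this with $S = \Omega_{c,c'}^{a_1,a_2,b,a_1',a_2',b',F/K}$ and $T = \Delta_\gamma^{\alpha_1,\alpha_2,\beta,F/K}$, whose associated ideals are by definition $J_{c,c'}(a_1,a_2,b,a_1',a_2',b',F/K)$ and $J_\gamma(\alpha_1,\alpha_2,\beta,F/K)$ respectively.

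Thus the disjointness condition appearing in the statement is equivalent to
\[ 1 \;\in\; J_{c,c'}(a_1,a_2,b,a_1',a_2',b',F/K) + J_\gamma(\alpha_1,\alpha_2,\beta,F/K), \]
which I rewrite existentially as: there exists $y \in F$ with $y \in J_{c,c'}(a_1,a_2,b,a_1',a_2',b',F/K)$ and $1-y \in J_\gamma(\alpha_1,\alpha_2,\beta,F/K)$.

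The full defining formula therefore reads
\begin{multline*}
a_1 a_2 c (1+4b)\, a_1' a_2' c' (1+4b')\, \alpha_1 \alpha_2 \gamma (1+4\beta) \neq 0 \\
\wedge\; \exists y \Bigl(\, y \in J_{c,c'}(a_1,a_2,b,a_1',a_2',b',F/K) \;\wedge\; 1 - y \in J_\gamma(\alpha_1,\alpha_2,\beta,F/K) \,\Bigr).
\end{multline*}
The nonvanishing condition on the first line is existentially definable by introducing multiplicative inverses. The membership of $y$ in $J_{c,c'}(a_1,a_2,b,a_1',a_2',b',F/K)$ is diophantine by Lemma \ref{jacobson}, while the membership of $1-y$ in $J_\gamma(\alpha_1,\alpha_2,\beta,F/K)$ is diophantine by the second clause of Proposition \ref{basicdefinitions}. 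Combining these two existential descriptions, quantifying over $y$, and bundling the several polynomial equations into a single one by \cite[Lemma 1.2.3]{MR2297245}, produces the desired diophantine definition.

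No serious obstacle is expected: the argument is a mechanical adaptation of the proof of Lemma \ref{emptyint}, with the single simplification that one of the two $\Omega$-ideals is replaced by the simpler $J_\gamma$-ideal, whose diophantineness is already available directly from Proposition \ref{basicdefinitions} without the need to invoke Lemma \ref{jacobson} on that factor. $\blacksquare$
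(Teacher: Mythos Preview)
Your proposal is correct and follows essentially the same approach as the paper: the paper simply remarks that the proof of Lemma \ref{emptyint} goes through verbatim once $J_{\gamma,\gamma'}(\alpha_1,\alpha_2,\beta,\alpha_1',\alpha_2',\beta',F/K)$ is replaced by $J_\gamma(\alpha_1,\alpha_2,\beta,F/K)$ and the variables $\alpha_1',\alpha_2',\beta',\gamma'$ are dropped, which is precisely what you have written out in detail.
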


\begin{lemma}\label{equalitytwodeltas}Let $K$ be a non-real number field and let $F$ be an algebraic function field in one variable over $K$. The set\[\left\{\left(a_1,a_2,b,c\right)\in F^{4}:a_1a_2c\left(1+4b\right)\neq 0\wedge\,\Delta_{a_1,a_2,b,F/K}=\Delta_c^{a_1,a_2,b,F/K}\right\}\]is $\forall\exists$-definable in $F$.
\end{lemma}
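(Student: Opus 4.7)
The plan is to rewrite the equality $\Delta_{a_1,a_2,b,F/K}=\Delta_c^{a_1,a_2,b,F/K}$ as a universal quantification over auxiliary parameters whose inner condition is diophantine, then invoke Lemma \ref{emptyint2}. The key combinatorial observation is the following parameterization trick: for any $\alpha_1,\alpha_2,\beta,\gamma\in F$ with $\alpha_1\alpha_2\gamma(1+4\beta)\neq 0$, the set
\[
\Omega_{\gamma,\gamma c}^{\alpha_1,\alpha_2,\beta,\alpha_1,\alpha_2,\beta,F/K}=\Delta_\gamma^{\alpha_1,\alpha_2,\beta,F/K}\cap\Delta_{\gamma c}^{\alpha_1,\alpha_2,\beta,F/K}
\]
is precisely the set of $w\in\Delta_{\alpha_1,\alpha_2,\beta,F/K}$ for which both $w(\gamma)$ and $w(\gamma c)=w(\gamma)+w(c)$ are odd; equivalently, $w(\gamma)$ is odd and $w(c)$ is even. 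Hence every element of this $\Omega$ automatically has even valuation at $c$.

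The central claim I would establish is that $\Delta_{a_1,a_2,b,F/K}=\Delta_c^{a_1,a_2,b,F/K}$ is equivalent to the universal statement
\[
\forall\alpha_1,\alpha_2,\beta,\gamma,c''\in F:\bigl[\alpha_1\alpha_2\gamma c''(1+4\beta)\neq 0\implies \Omega_{\gamma,\gamma c}^{\alpha_1,\alpha_2,\beta,\alpha_1,\alpha_2,\beta,F/K}\cap\Delta_{c''}^{a_1,a_2,b,F/K}=\emptyset\bigr].
\]
The forward direction is immediate: any $w$ in both sets would have $w(c)$ even (from the $\Omega$ part) while also lying in $\Delta_{c''}^{a_1,a_2,b,F/K}\subseteq\Delta_{a_1,a_2,b,F/K}=\Delta_c^{a_1,a_2,b,F/K}$, forcing $w(c)$ to be odd---a contradiction. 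For the backward direction I argue contrapositively: if some $v\in\Delta_{a_1,a_2,b,F/K}$ has $v(c)$ even, apply Proposition \ref{paramplaces2} to the two-element set $\{v,v_0\}$ (for any $v_0\in\mathcal{V}(F/K)\setminus\{v\}$) to obtain $\alpha_1,\alpha_2,\beta\in F$ with $\Delta_{\alpha_1,\alpha_2,\beta,F/K}=\{v,v_0\}$ together with a $\gamma\in F$ satisfying $v(\gamma)=1$; then taking $c''=\gamma$ places $v$ in the intersection, contradicting the universal statement.

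With the equivalence in hand, I would invoke Lemma \ref{emptyint2} (substituting $(\alpha_1,\alpha_2,\beta,\gamma)$ and $(\alpha_1,\alpha_2,\beta,\gamma c)$ for the two batches of $\Omega$-parameters and $(a_1,a_2,b,c'')$ for the $\Delta_\gamma^{\alpha_1,\alpha_2,\beta,F/K}$-parameters) to conclude that the inner condition is diophantine. Combining the outer universal quantifier with this existential body yields a $\forall\exists$-formula, and conjoining the diophantine constraint $a_1a_2c(1+4b)\neq 0$ preserves $\forall\exists$-definability.

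The main technical subtlety lies in the backward direction. One must verify that the restricted family of $\Omega$'s of the form $\Omega_{\gamma,\gamma c}^{\alpha_1,\alpha_2,\beta,\alpha_1,\alpha_2,\beta,F/K}$---where the two batches of parameters are forced to coincide and the second $c$-parameter is forced to be $\gamma c$---is rich enough to witness any putative failure of equality at a single place $v$. Proposition \ref{paramplaces2} provides exactly the required flexibility, letting us realize any pair $\{v,v_0\}$ as $\Delta_{\alpha_1,\alpha_2,\beta,F/K}$ while simultaneously furnishing a $\gamma$ with $v(\gamma)=1$.
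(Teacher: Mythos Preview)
Your approach is workable and genuinely different from the paper's, but there is one hypothesis mismatch you must repair.

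In the backward direction of your central equivalence you invoke Proposition~\ref{paramplaces2} to realize $\{v,v_0\}$ as a $\Delta$-set. That proposition, however, requires $F/K$ to be \emph{regular}, whereas Lemma~\ref{equalitytwodeltas} is stated for an arbitrary algebraic function field in one variable over a non-real number field $K$, with no regularity assumption. As written, your argument therefore proves a strictly weaker statement than the lemma claims. The fix is painless: you do not need fresh parameters $\alpha_1,\alpha_2,\beta$ at all. Given $v\in\Delta_{a_1,a_2,b,F/K}$ with $v(c)$ even, take $(\alpha_1,\alpha_2,\beta)=(a_1,a_2,b)$ (so that $v\in\Delta_{\alpha_1,\alpha_2,\beta,F/K}$ automatically) and let $\gamma=c''$ be any element of $F^\times$ with $v(\gamma)$ odd, for instance a uniformizer at $v$. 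Then $v(\gamma c)=v(\gamma)+v(c)$ is odd, so $v\in\Omega_{\gamma,\gamma c}^{a_1,a_2,b,a_1,a_2,b,F/K}\cap\Delta_{\gamma}^{a_1,a_2,b,F/K}$, witnessing failure of the universal statement without any appeal to place-parametrization or regularity.

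For comparison, the paper's proof is considerably more direct: it observes that $\Delta_{a_1,a_2,b,F/K}=\Delta_c^{a_1,a_2,b,F/K}$ is equivalent to the ring-theoretic inclusion $J_c(a_1,a_2,b,F/K)\subseteq T_{a_1,a_2,b,F/K}$, and hence is expressed by the single formula $\forall x\bigl(x\in J_c(a_1,a_2,b,F/K)\Rightarrow x\in T_{a_1,a_2,b,F/K}\bigr)$. The nontrivial direction of the equivalence (if $\Delta\neq\Delta_c$, produce $x\in J_c\setminus T$) is a one-line weak approximation argument. Since both membership predicates are existential by Proposition~\ref{basicdefinitions}, the resulting formula is $\forall\exists$. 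This bypasses Lemma~\ref{emptyint2} and the whole $\Omega$-machinery. Your route works (after the fix above) but is longer; its advantage is that it stays entirely within the ``empty-intersection'' toolkit you have already developed, whereas the paper's argument exploits the specific relationship between $J_c$ and $T$.
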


\begin{proof}We claim that the set is defined by\[a_1a_2c\left(1+4b\right)\neq 0\wedge \forall x\left(x\in J_c\left(a_1,a_2,b,F/K\right)\Rightarrow x\in T_{a_1,a_2,b,F/K}\right),\]which is a $\forall\exists$-formula by Proposition \ref{basicdefinitions}.

Given $\left(a_1,a_2,b,c\right)\in F^{4}$ with $a_1a_2c\left(1+4b\right)\neq 0$, we want to show that the sets $\Delta_{a_1,a_2,b,F/K}$ and $\Delta_c^{a_1,a_2,b,F/K}$ are equal if and only if $J_c\left(a_1,a_2,b,F/K\right)\subseteq T_{a_1,a_2,b,F/K}$.

Indeed, if $\Delta_{a_1,a_2,b,F/K}=\Delta_c^{a_1,a_2,b,F/K}$ then $J_c\left(a_1,a_2,b,F/K\right)$ is the Jacobson radical of $T_{a_1,a_2,b,F/K}$, thus the desired inclusion is immediate. Conversely, if $\Delta_{a_1,a_2,b,F/K}\neq\Delta_c^{a_1,a_2,b,F/K}$ then, since $\Delta_c^{a_1,a_2,b,F/K}\subseteq \Delta_{a_1,a_2,b,F/K}$, there exists $v\in \Delta_{a_1,a_2,b,F/K}\setminus \Delta_c^{a_1,a_2,b,F/K}$. By Weak Approximation there exists $x\in F$ with $v\left(x\right)<0$ and $w\left(x\right)>0$ for all $w\in \Delta_c^{a_1,a_2,b,F/K}$. The former implies $x\not\in T_{a_1,a_2,b,F/K}$, while the latter implies $x\in J_c\left(a_1,a_2,b,F/K\right)$, thus $J_c\left(a_1,a_2,b,F/K\right)\not\subseteq T_{a_1,a_2,b,F/K}$. $\blacksquare$
\end{proof}

With all these results, we can now proceed to give our first-order definitions for Campana points and Darmon points in algebraic function fields in one variable over number fields, by following the method described in Section \ref{methodology}.

\subsection{Campana points}\label{provingcampana}

Without any further delay, let us state and prove the first of our two main results.

\begin{theorem}\label{mainthmcampana}Let $K$ be a number field, let $F$ be an algebraic function field in one variable over $K$, let $S$ be a finite subset of $\mathcal{V}\left(F/K\right)$, and let $n\in\mathbb{Z}_{\geq 1}$. Then the set $C_{F/K,S,n}$ is $\forall\exists$-definable in $F$, uniformly with respect to all choices of $S$.
\end{theorem}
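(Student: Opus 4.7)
The plan is to follow the methodology of Section~\ref{methodology}: apply Theorem~\ref{reductionstep} to $K\left(\sqrt{-1}\right)$, whose hypotheses are verified by Proposition~\ref{nonrealnumbfieldsarek3}, and obtain a finite extension $L$ of $K\left(\sqrt{-1}\right)$ inside the algebraic closure of $FK\left(\sqrt{-1}\right)$ such that $FL$ is a regular algebraic function field in one variable over $L$, $WG^2\left(L\right)$ is nontrivial, and $3$-fold quadratic Pfister forms are linked over $FL$. Let $S'\subseteq\mathcal{V}\left(FL/L\right)$ denote the finite set of places of $FL$ restricting to an element of $S$. Because $\operatorname{Char}\left(K\right)=0$, Lemma~\ref{unramifiedbyextension2} gives $w\left(x\right)=v\left(x\right)$ whenever $x\in F$ and $v=w\restrict{F}$, whence $C_{F/K,S,n}=F\cap C_{FL/L,S',n}$. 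It therefore suffices to $\forall\exists$-define $C_{FL/L,S',n}$ in $FL$ with parameters depending on $S'$, and then transfer to $F$ via Corollary~\ref{backandforth}(iii); the uniformity in $S$ on the $F$-side is obtained by expanding each $FL$-parameter in a fixed $F$-basis of $FL$ and observing that the parameters encoding $S'$ in $FL$ can be chosen first-orderly from a Proposition~\ref{paramplaces3}-parametrization of $S$ in $F$.

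The key reformulation over $FL/L$ is that, for $x\in FL^\times$, $x\in C_{FL/L,S',n}$ if and only if for every finite $T\subseteq\mathcal{V}\left(FL/L\right)\setminus S'$ with $1/x\in\bigcap_{v\in T}\mathfrak{m}_v$ one also has $1/x\in\bigcap_{v\in T}\mathfrak{m}_v^n$; the nontrivial direction is obtained by ranging $T$ over singletons $\left\{v\right\}$ with $v\left(x\right)<0$. I would parametrize such $T$ by $\Omega_{c,c'}^{a_1,a_2,b,a_1',a_2',b',FL/L}$ via Proposition~\ref{paramplaces3}; the premises ``$T\cap S'=\emptyset$'' and ``$1/x\in J_{c,c'}\left(a_1,a_2,b,a_1',a_2',b',FL/L\right)$'' are then existentially definable by Lemmas~\ref{emptyint} and~\ref{jacobson}. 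The conclusion ``$1/x\in\bigcap_{v\in T}\mathfrak{m}_v^n$'' is rewritten as
\[
\exists z_1,\ldots,z_n\in J_{c,c'}\left(a_1,a_2,b,a_1',a_2',b',FL/L\right)\colon 1/x=z_1z_2\cdots z_n,
\]
whose $\supseteq$-inclusion is immediate from additivity of valuations and whose $\subseteq$-inclusion follows by using weak approximation to pick $\pi\in FL$ with $v\left(\pi\right)=1$ for every $v\in T$ and setting $z_1\coloneqq\left(1/x\right)\pi^{1-n}$, $z_2=\cdots=z_n\coloneqq\pi$, each of which has $v$-valuation $\geq 1$ on $T$. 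By Lemma~\ref{jacobson} this inner existential is diophantine, so the full formula
\[
x=0\;\lor\;\forall\left(a_1,a_2,b,c,a_1',a_2',b',c'\right)\,\bigl[\,\textup{premises}\Rightarrow\textup{conclusion}\,\bigr]
\]
is $\forall\exists$. Correctness in the converse direction is verified by taking $T=\left\{v_0\right\}$ for any $v_0\in\mathcal{V}\left(FL/L\right)\setminus S'$ with $v_0\left(x\right)\in\left\{-1,\ldots,-\left(n-1\right)\right\}$: the premise then holds, but the conclusion fails because any admissible tuple yields $v_0\left(z_1\cdots z_n\right)\geq n>-v_0\left(x\right)=v_0\left(1/x\right)$.

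The main obstacle I foresee is the uniformity bookkeeping: one must verify that the tuple of $F$-parameters consumed by the final formula over $F$ can be produced uniformly (through the fixed $F$-basis of $FL$) from the Proposition~\ref{paramplaces3}-parametrization of $S$ in $F$, so that a single formula works as $S$ ranges over all finite subsets of $\mathcal{V}\left(F/K\right)$. A secondary but standard technicality is that each occurrence of $1/x$ (including those inside the premises) must be expanded existentially as ``$\exists y\colon xy=1$'' and the existentials embedded in the premises absorbed into the outer universal block, so that the overall quantifier prefix remains $\forall\exists$ after prenex normalization.
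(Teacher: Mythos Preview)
Your proposal is correct and essentially identical to the paper's proof: the paper writes your condition ``$1/x\in J_T$ implies $1/x=z_1\cdots z_n$ with $z_i\in J_T$'' as ``$x\in(J_T\setminus\{0\})^{-1}$ implies $x\in(J_T\setminus\{0\})^{-n}$,'' and otherwise uses the same reduction via Theorem~\ref{reductionstep}, the same parametrization of the auxiliary sets $T$ via Proposition~\ref{paramplaces3} and Lemma~\ref{emptyint}, and the same descent via Corollary~\ref{backandforth} and Lemma~\ref{unramifiedbyextension2}. One small correction on uniformity: Proposition~\ref{paramplaces3} is applied in $FL$ (not in $F$) to parametrize $S'$, and the coordinates of the resulting $FL$-parameters in a fixed $F$-basis of $FL$ then serve as the uniform parameters over $F$ (cf.\ Remark~\ref{campanauniform}); there is no separate parametrization of $S$ carried out in $F$.
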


\begin{proof}Since $K\left(\sqrt{-1}\right)$ is a non-real number field, Proposition \ref{nonrealnumbfieldsarek3} gives $\operatorname{cd}_2\left(K\left(\sqrt{-1}\right)\right)=2$, $WG^2\left(K\left(\sqrt{-1}\right)\right)$ is nontrivial, and $3$-fold quadratic Pfister forms over any algebraic function field in one variable over $K\left(\sqrt{-1}\right)$ are linked. It follows from Theorem \ref{reductionstep} that there exists a finite extension $L$ of $K\left(\sqrt{-1}\right)$ contained in the algebraic closure of $FK\left(\sqrt{-1}\right)$ such that $F'\coloneqq FL$ is a regular algebraic function field in one variable over $L$. Observe that $L$ is also a non-real number field. Define\[S'\coloneqq\left\{w\in\mathcal{V}\left(F'/L\right):\exists v\in S\left(\mathcal{O}_w\cap K=\mathcal{O}_v\right)\right\}\]as the set of places in $\mathcal{V}\left(F'/L\right)$ lying over some place in $S$. By Proposition \ref{paramplaces3} there are $a_1,a_2,b,c,a_1',a_2',b',c'\in F'$ with $a_1a_2c\left(1+4b\right)a_1'a_2'c'\left(1+4b'\right)\neq 0$ and $S'=\Omega_{c,c'}^{a_1,a_2,b,a_1',a_2',b',F'/L}$.

We claim that $C_{F'/L,S',n}$ is defined in $F'$ through the formula (in the free variable $x$)\begin{multline}\label{campanadeffstorder}\forall\alpha_1\forall\alpha_2\forall\beta\forall\gamma\forall\alpha'_1\forall\alpha'_2\forall\beta'\forall\gamma'\\\left[\begin{pmatrix}\alpha_1\alpha_2\gamma\left(1+4\beta\right)\alpha'_1\alpha'_2\gamma'\left(1+4\beta'\right)\neq 0\\\Omega_{c,c'}^{a_1,a_2,b,a_1',a_2',b',F'/L}\cap \Omega_{\gamma,\gamma'}^{\alpha_1,\alpha_2,\beta,\alpha_1',\alpha_2',\beta',F'/L}=\emptyset\\x\in \left(J_{\gamma,\gamma'}\left(\alpha_1,\alpha_2,\beta,\alpha'_1,\alpha'_2,\beta',F'/L\right)\setminus\left\{0\right\}\right)^{-1}\end{pmatrix}\Rightarrow \right.\\\left.x\in \left(J_{\gamma,\gamma'}\left(\alpha_1,\alpha_2,\beta,\alpha'_1,\alpha'_2,\beta',F'/L\right)\setminus\left\{0\right\}\right)^{-n}\right].\end{multline}Observe that Lemma \ref{emptyint} and Lemma \ref{jacobson} show that this is indeed a $\forall\exists$-formula. Let us show that it defines $C_{F'/L,S',n}$.

Given $x\in F'$ such that \eqref{campanadeffstorder} holds on $x$ and $w\in\mathcal{V}\left(F'/L\right)\setminus S'$ such that \mbox{$w\left(x\right)\not\in\mathbb{Z}_{\geq 0}$}, we want to show that $w\left(x\right)\leq -n$. By Proposition \ref{paramplaces3} there exist $\alpha_1,\alpha_2,\beta,\gamma,\alpha_1',\alpha_2',\beta',\gamma'\in F'$ such that $\alpha_1\alpha_2\gamma\left(1+4\beta\right)\alpha'_1\alpha'_2\gamma'\left(1+4\beta'\right)\neq 0$ and $\Omega_{\gamma,\gamma'}^{\alpha_1,\alpha_2,\beta,\alpha_1',\alpha_2',\beta',F'/L}=\left\{w\right\}$. Since $w\not\in S'$ then $\Omega_{c,c'}^{a_1,a_2,b,a_1',a_2',b',F'/L}\cap \Omega_{\gamma,\gamma'}^{\alpha_1,\alpha_2,\beta,\alpha_1',\alpha_2',\beta',F'/L}=\emptyset$, and since $w\left(x\right)<0$ then $x\in \left(\mathfrak{m}_w\setminus\left\{0\right\}\right)^{-1}=\left(J_{\gamma,\gamma'}\left(\alpha_1,\alpha_2,\beta,\alpha'_1,\alpha'_2,\beta',F'/L\right)\setminus\left\{0\right\}\right)^{-1}$, hence \eqref{campanadeffstorder} implies\[x\in\left(J_{\gamma,\gamma'}\left(\alpha_1,\alpha_2,\beta,\alpha'_1,\alpha'_2,\beta',F'/L\right)\setminus\left\{0\right\}\right)^{-n}=\left(\mathfrak{m}_w\setminus\left\{0\right\}\right)^{-n},\]giving $w\left(x\right)\leq -n$, as desired.

Conversely, let $x\in C_{F'/L,S',n}$ and fix $\alpha_1,\alpha_2,\beta,\gamma,\alpha_1',\alpha_2',\beta',\gamma'\in F'$ such that:

\begin{itemize}

\item $\alpha_1\alpha_2\gamma\left(1+4\beta\right)\alpha'_1\alpha'_2\gamma'\left(1+4\beta'\right)\neq 0$,

\item $\Omega_{c,c'}^{a_1,a_2,b,a_1',a_2',b',F'/L}\cap \Omega_{\gamma,\gamma'}^{\alpha_1,\alpha_2,\beta,\alpha_1',\alpha_2',\beta',F'/L}=\emptyset$, and

\item $x\in \left(J_{\gamma,\gamma'}\left(\alpha_1,\alpha_2,\beta,\alpha'_1,\alpha'_2,\beta',F'/L\right)\setminus\left\{0\right\}\right)^{-1}$.

\end{itemize}

If we let $T\coloneqq \Omega_{\gamma,\gamma'}^{\alpha_1,\alpha_2,\beta,\alpha_1',\alpha_2',\beta',F'/L}$, the second condition implies $S'\cap T=\emptyset$, so $w\not\in S'$ for all $w\in T$. The third condition implies $w\left(x\right)<0$ for all $w\in T$.

Since no element of $T$ belongs to $S'$ and $x\in C_{F'/L,S',n}$, then the third condition also implies $w\left(x\right)\leq -n$ for all $w\in T$, which is nothing but $x\in\left(J_{\gamma,\gamma'}\left(\alpha_1,\alpha_2,\beta,\alpha'_1,\alpha'_2,\beta',F'/L\right)\setminus\left\{0\right\}\right)^{-n}$.

Corollary \ref{backandforth} implies that $C_{F'/L,S',n}\cap F$ is $\forall\exists$-definable in $F$. Since all $v\in\mathcal{V}\left(F/K\right)$ lie under some $w\in\mathcal{V}\left(F'/L\right)$, and all $w\in\mathcal{V}\left(F'/L\right)$ lie over some $v\in\mathcal{V}\left(F/K\right)$, then by definition of $S'$ we have that for all $v\in \mathcal{V}\left(F/K\right)\setminus S$ there exists $w\in \mathcal{V}\left(F'/L\right)\setminus S'$ lying over $v$, and for all $w\in \mathcal{V}\left(F'/L\right)\setminus S'$ there exists $v\in \mathcal{V}\left(F/K\right)\setminus S$ lying under $w$. In other words, if for each $w\in \mathcal{V}\left(F'/L\right)\setminus S'$ we let $v_w\in \mathcal{V}\left(F/K\right)\setminus S$ be the place lying under $w$, then we have a well-defined surjection $\mathcal{V}\left(F'/L\right)\setminus S'\to \mathcal{V}\left(F/K\right)\setminus S$ given by $w\mapsto v_w$ for all $w\in \mathcal{V}\left(F'/L\right)\setminus S'$. Now\begin{multline*}C_{F'/L,S',n}\cap F=\left\{x\in F:\forall w\in \mathcal{V}\left(F'/L\right)\setminus S'\left(w\left(x\right)\in\mathbb{Z}_{\geq 0}\cup\mathbb{Z}_{\leq -n}\right)\right\}\\=\left\{x\in F:\forall w\in \mathcal{V}\left(F'/L\right)\setminus S'\left(e\left(w\mid v_w\right)v_w\left(x\right)\in\mathbb{Z}_{\geq 0}\cup\mathbb{Z}_{\leq -n}\right)\right\}\\=\left\{x\in F:\forall w\in \mathcal{V}\left(F'/L\right)\setminus S'\left(v_w\left(x\right)\in\mathbb{Z}_{\geq 0}\cup\mathbb{Z}_{\leq -n}\right)\right\},\end{multline*}where in the last equality we have used Lemma \ref{unramifiedbyextension2}. Finally, since $w\mapsto v_w$ is a surjection $\mathcal{V}\left(F'/L\right)\setminus S'\to \mathcal{V}\left(F/K\right)\setminus S$, we get\[C_{F'/L,S',n}\cap F=\left\{x\in F:\forall v\in \mathcal{V}\left(F/K\right)\setminus S\left(v\left(x\right)\in\mathbb{Z}_{\geq 0}\cup\mathbb{Z}_{\leq -n}\right)\right\}=C_{F/K,S,n},\]which shows that $C_{F/K,S,n}$ is $\forall\exists$-definable in $F$, as desired. $\blacksquare$
\end{proof}

\begin{remark}\label{campanauniform}In the context of Theorem \ref{mainthmcampana}, we have actually proved that $C_{F/K,S,n}$ is \mbox{$\forall\exists$-definable} in a uniform way with respect to all finite $S\subseteq\mathcal{V}\left(F/K\right)$. Indeed, once we have fixed our $L$, the correspondence $S\mapsto S'$ is injective, and the formula in $F'$ is uniform with respect to the parameters that define $S'$ (cf. Proposition \ref{paramplaces3}). To descend to a first-order description in $F$ we used Corollary \ref{backandforth}, and a closer look at its proof shows that it also keeps track of the uniformity: for a fixed ordered $K$-basis $\Omega\subseteq L^{\left[L:K\right]}$ of $L/K$, the uniformity of our formula for $C_{F'/L,S',n}$ with respect to the parameters $a_1,a_2,b,c,a_1',a_2',b',c'$ descenders to a uniformity with respect to the $\Omega$-coordinates of these parameters.
\end{remark}

\subsection{Darmon points}\label{provingdarmon}

We will follow the exact same method as in Section \ref{provingcampana} to attain a first-order description of Darmon points in our context, but this case it will be more difficult since we will require more commutative algebra and more complex first-order formulas. In general, whenever we work with a non-real number field $K$ and a regular algebraic function field $F$ in one variable over $K$, we will use the fact that $\mathcal{O}_{F/K}$ is a Dedekind domain to argue that for all $a_1,a_2,b\in F$ with $a_1a_2\left(1+4b\right)\neq 0$ the semi-local ring $T_{a_1,a_2,b,F/K}$, being a localization of a Dedekind domain, is itself a Dedekind domain; and since it has only finitely many prime ideals, it is a principal ideal domain; in particular a unique factorization domain.

Second, given $m,n,r,k,\ell\in\mathbb{Z}_{\geq 0}$, polynomials $p\in F\left[x_1,\cdots,x_m,y_1,\cdots,y_n,u_1,\cdots,u_k\right]$ and $q\in F\left[z_1,\cdots,z_r,v_1,\cdots,v_\ell\right]$, consider the formula\begin{equation}\label{disjEA-E}\left[\exists x_1\cdots \exists x_m\forall y_1\cdots \forall y_n\left(p\left(\vec{x},\vec{y},\vec{u}\right)=0\right)\right]\vee \left[\exists z_1\cdots\exists z_r\left(q\left(\vec{z},\vec{v}\right)\neq 0\right)\right],\end{equation}which is the general shape of a disjunction between an $\exists\forall$-formula and an $\exists$-formula. Such a disjunction is always equivalent to an $\exists\forall$-formula. Indeed, adding another variable $u$, \mbox{\cite[Lemma 1.2.3]{MR2297245}} says that there exists $W\in F\left[\vec{x},\vec{y},\vec{z},\vec{u},\vec{v},u\right]$ such that the system\[\begin{cases}p\left(\vec{x},\vec{y},\vec{u}\right)&=0,\\uq\left(\vec{z},\vec{v}\right)-1&=0\end{cases}\]is equivalent to $W\left(\vec{x},\vec{y},\vec{z},\vec{u},\vec{v},u\right)=0$. Formula \eqref{disjEA-E} is now equivalent to\[\exists x_1\cdots\exists x_m\exists z_1\cdots\exists z_r\forall y_1\cdots\forall y_n\forall u\left(W\left(\vec{x},\vec{y},\vec{z},\vec{u},\vec{v},u\right)\neq 0\right),\]as claimed.

Taking this into account, we now show:

\begin{theorem}\label{mainthmdarmon}Let $K$ be a number field, let $F$ be an algebraic function field in one variable over $K$, let $S$ be a finite subset of $\mathcal{V}\left(F/K\right)$, and let $n\in\mathbb{Z}_{\geq 1}$. Then the set $D_{F/K,S,n}$ is $\forall\exists\forall$-definable in $F$, uniformly with respect to all choices of $S$.
\end{theorem}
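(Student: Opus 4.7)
The plan is to follow the template of Theorem \ref{mainthmcampana}, replacing the Campana condition $w(x)\leq -n$ by the Darmon divisibility condition $n\mid w(x)$. First I would reduce to the regular, non-real setting exactly as before: apply Proposition \ref{nonrealnumbfieldsarek3} to $K(\sqrt{-1})$ and Theorem \ref{reductionstep} to obtain a finite extension $L\supseteq K(\sqrt{-1})$ inside the algebraic closure of $FK(\sqrt{-1})$ such that $L$ is a non-real number field, $F'\coloneqq FL$ is a regular algebraic function field in one variable over $L$, and $3$-fold quadratic Pfister forms are linked over $F'$. Then I would set $S'\coloneqq\{w\in\mathcal{V}(F'/L):\exists v\in S\,(\mathcal{O}_w\cap K=\mathcal{O}_v)\}$ and fix parameters $a_1,a_2,b,c,a_1',a_2',b',c'$ with $S'=\Omega_{c,c'}^{a_1,a_2,b,a_1',a_2',b',F'/L}$ via Proposition \ref{paramplaces3}.

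Next, I would build a first-order formula defining $D_{F'/L,S',n}$ in $F'$. The outer $\forall$ quantifies over tuples $(\alpha_1,\alpha_2,\beta,\gamma,\alpha_1',\alpha_2',\beta',\gamma')$ describing an arbitrary finite set $\Omega^{\star}\coloneqq\Omega_{\gamma,\gamma'}^{\alpha_1,\alpha_2,\beta,\alpha_1',\alpha_2',\beta',F'/L}$, under the diophantine hypotheses $\Omega^{\star}\cap S'=\emptyset$ (Lemma \ref{emptyint}) and $x^{-1}\in J_{\gamma,\gamma'}(\alpha_1,\alpha_2,\beta,\alpha_1',\alpha_2',\beta',F'/L)\setminus\{0\}$, which forces $w(x)<0$ at every $w\in\Omega^{\star}$ (Lemma \ref{jacobson}). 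Under this hypothesis the Darmon condition demands $n\mid w(x)$ for every $w\in\Omega^{\star}$, equivalently that there exists $y\in F'^{\times}$ with $xy^{-n}$ a unit at every $w\in\Omega^{\star}$; that is, $xy^{-n}$ lies in the unit group of the holomorphy ring $R_{\Omega^{\star}}\coloneqq\bigcap_{w\in\Omega^{\star}}\mathcal{O}_w$.

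The key commutative-algebra input I would establish is the ring-level analogue
\[R_{\Omega^{\star}}\;=\;T_{\alpha_1,\alpha_2,\beta,F'/L}+T_{\alpha_1',\alpha_2',\beta',F'/L}\]
of the ideal-level identity $J_{c,c'}=J_c+J_{c'}$ used in Lemma \ref{jacobson}: one inclusion is immediate, and the other follows by applying the Weak Approximation Theorem to the finitely many places in $\Delta_{\alpha_1,\alpha_2,\beta,F'/L}\cup\Delta_{\alpha_1',\alpha_2',\beta',F'/L}$ so as to split any given $c\in R_{\Omega^{\star}}$ as $c=a+b$ with the correct integrality profile at each relevant place. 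Together with Proposition \ref{basicdefinitions}, this makes membership in $R_{\Omega^{\star}}$ and in $R_{\Omega^{\star}}^{\times}$ diophantinely describable. Using the UFD structure of the semi-local rings $T_{\alpha_1,\alpha_2,\beta,F'/L}$ (as highlighted in the preamble to the theorem), I would express the existence of $y$ as a disjunction between an $\exists\forall$-statement witnessing a factorization and certifying unit-ness against all primes of the relevant local PID, and a simpler $\exists$-statement handling the remaining sub-case; the disjunction-to-$\exists\forall$ reduction displayed at \eqref{disjEA-E} then collapses this to a single $\exists\forall$-formula, yielding the claimed $\forall\exists\forall$-description.

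Finally I would descend from $F'$ to $F$ via Corollary \ref{backandforth}(iv) and identify $D_{F'/L,S',n}\cap F=D_{F/K,S,n}$ through the surjection $\mathcal{V}(F'/L)\setminus S'\twoheadrightarrow\mathcal{V}(F/K)\setminus S$ combined with Lemma \ref{unramifiedbyextension2}, which guarantees that every ramification index $e(w\mid v_w)$ equals $1$ so that $n\mid w(x)$ over $F'$ corresponds precisely to $n\mid v_w(x)$ over $F$. Uniformity in $S$ follows exactly as in Remark \ref{campanauniform}. The principal obstacle will be the assembly of the inner formula: whereas the Campana inequality $w(x)\leq -n$ was captured cleanly by a power of the Jacobson radical $J_{\gamma,\gamma'}$, the Darmon divisibility $n\mid w(x)$ requires synchronizing a single witness $y$ across all $w\in\Omega^{\star}$ and certifying unit-ness in the larger ring $R_{\Omega^{\star}}$, and this is where both the holomorphy-ring decomposition above and the UFD/disjunction machinery from the preamble are essential.
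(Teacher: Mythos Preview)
Your reduction to the regular non-real case and the final descent via Corollary~\ref{backandforth}(iv) and Lemma~\ref{unramifiedbyextension2} are exactly as in the paper, and the uniformity remark is fine. The gap is in the construction of the inner formula over $F'$.

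The claimed identity $R_{\Omega^{\star}}=T_{\alpha_1,\alpha_2,\beta,F'/L}+T_{\alpha_1',\alpha_2',\beta',F'/L}$ is false in general, and it is not the ring-level analogue of \eqref{ja+jb=jab}. Concretely, take $(\alpha_1',\alpha_2',\beta')=(\alpha_1,\alpha_2,\beta)$, so that $\Delta\coloneqq\Delta_{\alpha_1,\alpha_2,\beta,F'/L}=\Delta_{\alpha_1',\alpha_2',\beta',F'/L}$ and hence $T+T'=T=\bigcap_{v\in\Delta}\mathcal{O}_v$; by choosing $\gamma,\gamma'$ suitably one can make $\Omega^{\star}=\Delta_\gamma\cap\Delta_{\gamma'}'$ any proper subset of $\Delta$, and then $R_{\Omega^{\star}}\supsetneq T=T+T'$. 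Without this identity you have no diophantine description of $R_{\Omega^{\star}}$ or of its unit group, so the conclusion ``$\exists y$ with $xy^{-n}\in R_{\Omega^{\star}}^{\times}$'' cannot be written with the complexity you need. Your description of the inner clause as ``a disjunction between an $\exists\forall$-statement\ldots and a simpler $\exists$-statement handling the remaining sub-case'' is also too vague to assess: you never say what the universal quantifier ranges over or what the sub-case is.

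The paper sidesteps this by abandoning the eight-parameter family $\Omega_{\gamma,\gamma'}^{\ldots}$ in the inner quantifier. It quantifies over only $(\alpha_1,\alpha_2,\beta,\gamma)$ and adds to the hypothesis the extra condition $\Delta_{\alpha_1,\alpha_2,\beta,F'/L}=\Delta_\gamma^{\alpha_1,\alpha_2,\beta,F'/L}$. Under this constraint the already-diophantine ring $T_{\alpha_1,\alpha_2,\beta,F'/L}$ \emph{is} the holomorphy ring over the places in play, and the Darmon conclusion becomes the purely existential statement $\exists y,z\in T$ with $\gcd_T(y,z)=1$ and $y=xz^n$ in this PID. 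The price is that the equality $\Delta=\Delta_\gamma$ is only $\forall\exists$-definable (Lemma~\ref{equalitytwodeltas}); its negation is $\exists\forall$, and the disjunction of that with the existential conclusion is then collapsed precisely by the trick displayed at \eqref{disjEA-E}, producing the $\forall\exists\forall$ shape. For the forward direction of the correctness proof one uses Proposition~\ref{paramplaces2} to realise any $\{w,w'\}\subseteq\mathcal{V}(F'/L)\setminus S'$ as such a $\Delta=\Delta_\gamma$.
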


\begin{proof}As in the beginning of the proof of Theorem \ref{mainthmcampana}, there exists a finite extension $L$ of $K$ contained in the algebraic closure of $FK\left(\sqrt{-1}\right)$ such that $L$ is non-real and $F'\coloneqq FL$ is a regular algebraic function field in one variable over $L$. Also, $D_{F/K,S,n}=D_{F'/L,S',n}\cap F$ (where $S'$ is the set of all places in $\mathcal{V}\left(F'/L\right)$ lying over some place in $S$) by the same argument given in the last part of the proof of Theorem \ref{mainthmcampana}, so by Corollary \ref{backandforth} it suffices to show that $D_{F'/L,S',n}$ is $\forall\exists\forall$-definable in $F'$. By Proposition \ref{paramplaces3} there are $a_1,a_2,b,c,a_1',a_2',b',c'\in F'$ with $a_1a_2c\left(1+4b\right)a_1'a_2'c'\left(1+4b'\right)\neq 0$ and $S'=\Omega_{c,c'}^{a_1,a_2,b,a_1',a_2',b',F'/L}$. We claim that $D_{F'/L,S',n}$ is defined by the formula\begin{multline}\label{darmonfstorder}\forall \alpha_1\forall\alpha_2\forall \beta\forall\gamma\left[\begin{pmatrix}\alpha_1\alpha_2\gamma\left(1+4\beta\right)\neq 0\\\Omega_{c,c'}^{a_1,a_2,b,a_1',a_2',b',F'/L}\cap \Delta_{\gamma}^{\alpha_1,\alpha_2,\beta,F'/L}=\emptyset\\\Delta_{\alpha_1,\alpha_2,\beta,F'/L}=\Delta_{\gamma}^{\alpha_1,\alpha_2,\beta,F'/L}\end{pmatrix}\Rightarrow\right.\\\left.\exists y\exists z\left(y,z\in T_{\alpha_1,\alpha_2,\beta,F'/L}\wedge \operatorname{gcd}_{T_{\alpha_1,\alpha_2,\beta,F'/L}}\left(y,z\right)=1\wedge y=xz^n\right)\right].\end{multline}Observe that, in the context of the above formula, $\operatorname{gcd}_{T_{\alpha_1,\alpha_2,\beta,F'/L}}\left(y,z\right)=1$ is a meaningful and diophantine statement: indeed, $T_{\alpha_1,\alpha_2,\beta,F'/L}$ is a principal ideal domain, so the statement is equivalent to $\exists s\exists t\left(s,t\in T_{\alpha_1,\alpha_2,\beta,F'/L}\wedge sy+tz=1\right)$, which is existential by Proposition \ref{basicdefinitions}. We conclude that\begin{equation}\label{existential}\exists y\exists z\left(y,z\in T_{\alpha_1,\alpha_2,\beta,F'/L}\wedge \operatorname{gcd}_{T_{\alpha_1,\alpha_2,\beta,F'/L}}\left(y,z\right)=1\wedge y=xz^n\right)\end{equation}is an existential statement. It follows that, to prove that \eqref{darmonfstorder} is a $\forall\exists\forall$-formula, it suffices to show that\begin{equation}\label{forallexists}\begin{pmatrix}\alpha_1\alpha_2\gamma\left(1+4\beta\right)\neq 0\\\Omega_{c,c'}^{a_1,a_2,b,a_1',a_2',b',F'/L}\cap \Delta_{\gamma}^{\alpha_1,\alpha_2,\beta,F'/L}=\emptyset\\\Delta_{\alpha_1,\alpha_2,\beta,F'/L}=\Delta_{\gamma}^{\alpha_1,\alpha_2,\beta,F'/L}\end{pmatrix}\end{equation}is a $\forall\exists$-formula, since therefore its negation will be an $\exists\forall$-formula and then its disjunction with \eqref{existential} will be an $\exists\forall$-formula (as discussed prior to the proof of this theorem), thus rendering \eqref{darmonfstorder} as a $\forall\exists\forall$-formula (with exactly four initial universal quantifiers). And \eqref{forallexists} is indeed a $\forall\exists$-formula by Lemma \ref{emptyint2} and Lemma \ref{equalitytwodeltas}.

Given $x\in F'$ such that \eqref{darmonfstorder} holds on $x$ and $w\in\mathcal{V}\left(F'/L\right)\setminus S'$ such that \mbox{$w\left(x\right)\not\in\mathbb{Z}_{\geq 0}$}, we want to show that $n\mid w\left(x\right)$. Fixing $w'\in\mathcal{V}\left(F'/L\right)\setminus \left(S'\cup \left\{w\right\}\right)$, by Proposition \ref{paramplaces2} there exist $\alpha_1,\alpha_2,\beta,\gamma\in F$ with $\alpha_1\alpha_2\gamma\left(1+4\beta\right)\neq 0$ and $\Delta_{\alpha_1,\alpha_2,\beta,F'/L}=\Delta_{\gamma}^{\alpha_1,\alpha_2,\beta,F'/L}=\left\{w,w'\right\}$. Now formula \eqref{darmonfstorder} implies that $y=xz^n$ for some relatively prime $y,z\in \mathcal{O}_w\cap\mathcal{O}_{w'}$. Applying $w$ to this equality we get $w\left(y\right)\equiv w\left(x\right)\pmod{n}$, so it suffices to show that $w\left(y\right)=0$. Since $y,z\in \mathcal{O}_w\cap\mathcal{O}_{w'}$ are relatively prime, it suffices to show that $w\left(z\right)>0$. Indeed, if $w\left(z\right)=0$ then $w\left(y\right)=w\left(x\right)$, which is impossible because $w\left(x\right)<0\leq w\left(y\right)$.

Conversely, let $x\in D_{F'/L,S',n}$ and let us show that $x$ satisfies \eqref{darmonfstorder}. Fix $\alpha_1,\alpha_2,\beta,\gamma\in F'$ such that $\alpha_1\alpha_2\gamma\left(1+4\beta\right)\neq 0$, $\Omega_{c,c'}^{a_1,a_2,b,a_1',a_2',b',F'/L}\cap \Delta_{\gamma}^{\alpha_1,\alpha_2,\beta,F'/L}=\emptyset$, and \mbox{$\Delta_{\alpha_1,\alpha_2,\beta,F'/L}=\Delta_{\gamma}^{\alpha_1,\alpha_2,\beta,F'/L}$}. We want to show that $y=xz^n$ for some relatively prime $y,z\in T_{\alpha_1,\alpha_2,\beta,F'/L}$. Indeed, given \mbox{$w\in \Delta_{\gamma}^{\alpha_1,\alpha_2,\beta,F'/L}$} we have $w\not\in S'$, since\[S'\cap \Delta_{\gamma}^{\alpha_1,\alpha_2,\beta,F'/L}=\Omega_{c,c'}^{a_1,a_2,b,a_1',a_2',b',F'/L}\cap \Delta_{\gamma}^{\alpha_1,\alpha_2,\beta,F'/L}=\emptyset.\]Since $x\in D_{F'/L,S',n}$, we get $w\left(x\right)\in\mathbb{Z}_{\geq 0}\cup n\mathbb{Z}$ for all $w\in \Delta_{\gamma}^{\alpha_1,\alpha_2,\beta,F'/L}=\Delta_{\alpha_1,\alpha_2,\beta,F'/L}$.

Since $x\in F=\operatorname{Frac}\left(T_{\alpha_1,\alpha_2,\beta,F'/L}\right)$ and $T_{\alpha_1,\alpha_2,\beta,F'/L}$ is a unique factorization domain, then $x=\frac{y''}{y'}$ where $y',y''\in T_{\alpha_1,\alpha_2,\beta,F'/L}$ are relatively prime in $T_{\alpha_1,\alpha_2,\beta,F'/L}$. In this ring, the only irreducible elements are the generators of the maximal ideals induced by the finitely many places in $\Delta_{\gamma}^{\alpha_1,\alpha_2,\beta,F'/L}$. Moreover, given $w\in \Delta_{\gamma}^{\alpha_1,\alpha_2,\beta,F'/L}$ such that $w\left(y'\right)>0$, we must have $w\left(y''\right)=0$ and therefore $w\left(x\right)=-w\left(y'\right)<0$, which implies $n\mid w\left(x\right)$, and therefore $n\mid w\left(y'\right)$. Since $w$ was an arbitrary element of $\Delta_{\gamma}^{\alpha_1,\alpha_2,\beta,F'/L}$ with $w\left(y'\right)>0$, all the exponents in the factorization of $y'$ into products of irreducibles are divisible by $n$, which shows that $y'$ is, up to a unit in $T_{\alpha_1,\alpha_2,\beta,F'/L}$, a perfect $n$th power in $T_{\alpha_1,\alpha_2,\beta,F'/L}$, and therefore $x=\frac{y}{z^n}$ for some $y,z\in T_{\alpha_1,\alpha_2,\beta,F'/L}$ relatively prime. $\blacksquare$


\end{proof}

\begin{remark}\label{darmonuniform}With the same argument as in Remark \ref{campanauniform}, our proof shows that, in the context of Theorem \ref{mainthmdarmon}, we have actually proved that $D_{F/K,S,n}$ is \mbox{$\forall\exists\forall$-definable} in a uniform way with respect to all finite $S\subseteq\mathcal{V}\left(F/K\right)$.\end{remark}

Theorem \ref{mainthmdarmon} can be significantly improved when $S=\emptyset$. We show that, in this case, one attains a $\forall\exists$-definition.

\begin{theorem}\label{mainthmdarmon2}Let $K$ be a number field, let $F$ be an algebraic function field in one variable over $K$, and let $n\in\mathbb{Z}_{\geq 1}$. Then the set $D_{F/K,\emptyset,n}$ is $\forall\exists$-definable in $F$.
\end{theorem}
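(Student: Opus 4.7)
The plan is to mimic the proof of Theorem \ref{mainthmdarmon} while exploiting the absence of any exceptional set $S$ to drop the two hypotheses that were responsible for the extra universal block. I first apply the same reduction step: choose a finite extension $L$ of $K$ inside the algebraic closure of $FK(\sqrt{-1})$ such that $L$ is non-real and $F' \coloneqq FL$ is a regular algebraic function field in one variable over $L$. Lemma \ref{unramifiedbyextension2} then gives $D_{F/K,\emptyset,n} = D_{F'/L,\emptyset,n} \cap F$, so by Corollary \ref{backandforth} it suffices to produce a $\forall\exists$-definition of $D_{F'/L,\emptyset,n}$ in $F'$.

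The defining formula I propose (in the free variable $x$) is
\begin{equation*}
\forall \alpha_1 \forall \alpha_2 \forall \beta \,\exists y \,\exists z \,\Bigl[\, \alpha_1\alpha_2(1+4\beta) = 0 \,\vee\, \bigl(\, y, z \in T_{\alpha_1,\alpha_2,\beta,F'/L} \,\wedge\, \operatorname{gcd}_{T_{\alpha_1,\alpha_2,\beta,F'/L}}(y,z)=1 \,\wedge\, y = xz^n \,\bigr) \,\Bigr].
\end{equation*}
This is $\forall\exists$: membership in $T_{\alpha_1,\alpha_2,\beta,F'/L}$ is diophantine by Proposition \ref{basicdefinitions}, and coprimality in the principal ideal domain $T_{\alpha_1,\alpha_2,\beta,F'/L}$ is the existential condition $\exists s \exists t(s,t \in T_{\alpha_1,\alpha_2,\beta,F'/L} \wedge sy+tz=1)$; the hypothesis ``$\alpha_1\alpha_2(1+4\beta) \neq 0$'' that was the antecedent in \eqref{darmonfstorder} has been absorbed here as its polynomial contrapositive into the disjunction, introducing no further quantifier alternation. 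The structural simplification compared with Theorem \ref{mainthmdarmon} is that the two other hypotheses of \eqref{darmonfstorder}, namely ``$\Omega^{a_1,a_2,b,a_1',a_2',b'}_{c,c'} \cap \Delta^{\alpha_1,\alpha_2,\beta}_\gamma = \emptyset$'' and ``$\Delta_{\alpha_1,\alpha_2,\beta} = \Delta^{\alpha_1,\alpha_2,\beta}_\gamma$'', disappear here: the first because $S' = \emptyset$ makes it automatic, and the second because without any protected set we can use $\Delta_{\alpha_1,\alpha_2,\beta}$ directly, with no mismatch to correct.

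For correctness, the forward implication mirrors the converse in the proof of Theorem \ref{mainthmdarmon}: if $x \in D_{F'/L,\emptyset,n}$ and $\alpha_1\alpha_2(1+4\beta) \neq 0$, then $x$ is Darmon at every $v \in \Delta_{\alpha_1,\alpha_2,\beta,F'/L}$ (no exceptional places), so writing $x = y''/y'$ in lowest terms in the PID $T_{\alpha_1,\alpha_2,\beta,F'/L}$ and arguing prime-by-prime shows each prime-exponent of $y'$ is divisible by $n$, whence $y' = u z^n$ for a unit $u$ and some $z \in T_{\alpha_1,\alpha_2,\beta,F'/L}$, and then $y \coloneqq y''/u$ together with $z$ is the required coprime pair. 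Conversely, if $x$ satisfies the formula, fix any $v \in \mathcal{V}(F'/L)$, apply Proposition \ref{paramplaces2} to $\{v, w\}$ for any auxiliary $w \in \mathcal{V}(F'/L) \setminus \{v\}$ to obtain $\alpha_1, \alpha_2, \beta$ with $v \in \Delta_{\alpha_1,\alpha_2,\beta,F'/L}$ and $\alpha_1\alpha_2(1+4\beta) \neq 0$; the formula then yields $y, z \in T_{\alpha_1,\alpha_2,\beta,F'/L}$ coprime with $y = xz^n$, and at $v$ coprimality prevents both $v(y)$ and $v(z)$ from being positive, forcing either $v(x) = v(y) \geq 0$ (when $v(z) = 0$) or $v(x) = -n v(z) \in n\mathbb{Z}$ (when $v(z) > 0$, in which case $v(y) = 0$).

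The main obstacle is pure bookkeeping: verifying that the proposed disjunction really does save a $\forall$-block while preserving both implications, and that the PID/UFD machinery imported from Theorem \ref{mainthmdarmon} continues to function without the equality-of-$\Delta$'s hypothesis that underlay its use there.
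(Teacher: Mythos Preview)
Your proposal is correct and follows essentially the same approach as the paper: the same reduction to a non-real regular base, the same defining formula (written as a disjunction rather than an implication, but logically identical), the same PID argument for the forward direction, and the same use of Proposition~\ref{paramplaces2} applied to a pair $\{v,w\}$ for the converse.
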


\begin{proof}As in the proof of Theorem \ref{mainthmdarmon}, we may assume that $K$ is non-real and $F/K$ is regular. We claim that $D_{F/K,\emptyset,n}=\left\{x\in F:v\left(x\right)\in\mathbb{Z}_{\geq 0}\cup n\mathbb{Z}\text{ for all $v\in\mathcal{V}\left(F/K\right)$}\right\}$ is defined by\begin{multline}\label{darmonfstorder2}\forall \alpha_1\forall\alpha_2\forall \beta\left[\left(\alpha_1\alpha_2\left(1+4\beta\right)\neq 0\right)\Rightarrow\right.\\\left.\exists y\exists z\left(y,z\in T_{\alpha_1,\alpha_2,\beta,F/K}\wedge \operatorname{gcd}_{T_{\alpha_1,\alpha_2,\beta,F/K}}\left(y,z\right)=1\wedge y=xz^n\right)\right].\end{multline}Indeed, fix $x\in F$ such that \eqref{darmonfstorder2} holds on $x$, and fix $v\in\mathcal{V}\left(F/K\right)$. We want to show that $v\left(x\right)\in\mathbb{Z}_{\geq 0}\cup n\mathbb{Z}$. Fix any $w\in\mathcal{V}\left(F/K\right)\setminus\left\{v\right\}$ and use Proposition \ref{paramplaces2} to find $\alpha_1,\alpha_2,\beta,\gamma\in F$ with $\alpha_1\alpha_2\left(1+4\beta\right)\neq 0$ and $\Delta_{\alpha_1,\alpha_2,\beta,F/K}=\left\{v,w\right\}$. It follows from \eqref{darmonfstorder2} that there exist relatively prime $y,z\in\mathcal{O}_v\cap\mathcal{O}_w$ with $y=xz^n$. Since $y,z\in\mathcal{O}_v\cap\mathcal{O}_w$ are relatively prime then $v\left(y\right)\geq 0$, $v\left(z\right)\geq 0$, and $v\left(y\right)v\left(z\right)=0$. And since $y=xz^n$ then $v\left(x\right)=v\left(y\right)-nv\left(z\right)$, thus $v\left(x\right)\in\mathbb{Z}_{\geq 0}\cup n\mathbb{Z}$.

Conversely, let $x\in D_{F/K,\emptyset,n}$, and let us show that \eqref{darmonfstorder2} holds on $x$. Given $\alpha_1,\alpha_2,\beta,\gamma\in F$ with $\alpha_1\alpha_2\left(1+4\beta\right)\neq 0$, we want to show that $y=xz^n$ for some relatively prime $y,z\in T_{\alpha_1,\alpha_2,\beta,F/K}$. Exactly as in the last paragraph of the proof of Theorem \ref{mainthmdarmon}, this is equivalent to $v\left(x\right)\in\mathbb{Z}_{\geq 0}\cup n\mathbb{Z}$ for each $v\in \Delta_{\alpha_1,\alpha_2,\beta,F/K}$, and this is true because $x\in D_{F/K,\emptyset,n}$. $\blacksquare$

\end{proof}

\printbibliography

\end{document}